\numberwithin{equation}{section}
\numberwithin{equation}{section}
\newtheorem{thm}{Theorem}[section]
\newtheorem{prop}[thm]{Proposition}
\newtheorem{lem}[thm]{Lemma}
\newtheorem{rem}[thm]{Remark}
\newtheorem{cor}[thm]{Corollary}
\renewcommand{\theequation}{\thesection.\arabic{equation}}
\newcommand{\beq}{\begin{equation}}
\newcommand{\eeq}{\end{equation}}
\newcommand{\C}{{\mathbb C}}
\newcommand{\Z}{{\mathbb Z}}
\newcommand{\cH}{\mathcal{H}}
\newcommand{\cP}{\mathcal{P}}
\newcommand{\bs}{\boldsymbol}
\newcommand{\g}{\mathfrak{g}}
\newcommand{\h}{\mathfrak{h}}
\newcommand{\n}{\mathfrak{n}}
\newcommand{\ve}{\varepsilon}
\newcommand{\la}{\lambda}
\newcommand{\La}{\Lambda}
\newcommand{\gge}{\geqslant}
\newcommand{\lle}{\leqslant}
\newcommand{\mc}{\mathcal}
\newcommand{\tl}{\tilde}
\newcommand{\pa}{{\partial}}
\def\@eqnnum{{\normalfont \color{red} (\theequation)}}
\begin{document}
\pagestyle{myheadings}

\setcounter{page}{1}

\title{On the Gaudin model associated to Lie algebras of classical types}

\author{Kang Lu, E. Mukhin, and A. Varchenko}
\address{K.L.: Department of Mathematical Sciences,
Indiana University-Purdue University-Indianapolis,
402 N.Blackford St., LD 270,
Indianapolis, IN 46202, USA}\email{lukang@iupui.edu}
\address{E.M.: Department of Mathematical Sciences,
Indiana University-Purdue University-Indianapolis,
402 N.Blackford St., LD 270,
Indianapolis, IN 46202, USA}\email{mukhin@math.iupui.edu}
\address{A.V.: Department of Mathematics, University of North Carolina at Chapel Hill,
Chapel Hill, NC 27599-3250, USA}
\email{anv@email.unc.edu}

\begin{abstract}
We derive explicit formulas for solutions of the Bethe ansatz equations of the Gaudin model associated to the tensor product of one arbitrary finite-dimensional irreducible module and one vector representation for all simple Lie algebras of classical type. We use this result to show that the Bethe ansatz is complete in any tensor product where all but one factor are vector representations and the evaluation parameters are generic. 
\end{abstract}

\maketitle

\section{Introduction}
The Gaudin Hamiltonians are an important example of a family of commuting operators. We study the case when the Gaudin Hamiltonians possess a symmetry given by the diagonal action of $\g$. In this case the Gaudin Hamiltonians depend on a choice of a simple Lie algebra $\g$, $\g$-modules $V_1,\dots,V_n$ and distinct complex numbers $z_1,\dots,z_n$, see \eqref{eq hamil}.

The problem of studying the spectrum of the Gaudin Hamiltonians has received a lot of attention.
However, the majority of the work has been done in type A. In this paper we study the cases of types B, C and D.

The main approach is the Bethe ansatz method. Our goal is to establish the method when all but one modules $V_i$ are isomorphic to the first fundamental representation $V_{\omega_1}$. Namely, we show that the Bethe ansatz equations have sufficiently many solutions and that the Bethe vectors constructed from those solutions form a basis in the space of singular vectors of $V_1\otimes\dots\otimes V_n$.

\medskip

The solution of a similar problem in type A in \cite{MV1} led to several important results, such as a proof of the strong form of the Shapiro-Shapiro conjecture for Grassmanians, simplicity of the spectrum of higher Gauding Hamiltonians, the bijection betweem Fuchsian differential operators without monodromy with the Bethe vectors, etc, see \cite{MTV2} and references therein. We hope that this paper will give a start to similar studies in type B.  In addition, the explicit formulas for simplest examples outside type A are important as
experimental data for testing various conjectures.

\medskip

By the standard methods, the problem is reduced to the case of $n=2$, with $V_1$ being an arbitrary finite-dimensional module, $V_2=V_{\omega_1}$  and $z_1=0$, $z_2=1$.
The reduction involves taking appropriate limits, when all points $z_i$ go to the same number with different rates. Then the $n=2$ problems are observed in
the leading order and the generic situation is recovered from the limiting case by the usual argument of deformations of isolated solutions of algebraic systems, see \cite{MV1} and Section \ref{sec B generic} for details.

For the 2-point case when one of the modules is the defining representation $V_{\omega_1}$, the spaces of singular vectors of a given weight are either trivial or one-dimensional. Then, according to the general philosophy, see \cite{MV3}, one would expect to solve the Bethe ansatz equations explicitly. In type A
it was done in \cite{MV3}. In the supersymmetric case of $\mathfrak{gl}(p|q)$ the corresponding Bethe ansatz equations are solved in \cite{MVY}. The other known cases with one dimensional spaces include tensor products of two arbitrary irreducible $\mathfrak{sl}_2$ modules, see \cite{V2} and tensor products of an arbitrary module with a symmetric power $V_{k\omega_1}$ of the vector representation in the case of $\mathfrak{sl}_{r+1}$, see \cite{MV5}. Interestingly, in the latter case the solutions of the Bethe ansatz equations are related to zeros of Jacobi-Pineiro polynomials which are multiple orthogonal polynomials.

In all previously known cases when the dimension of the space of singular vectors of a given weight is one, the elementary symmetric functions of solutions of Bethe ansatz equations completely factorize into products of linear functions of the parameters. This was one of the main reasons the formulas were found essentially by brute force. However, unexpectedly, the computer experiments showed that in types B, C, D, the formulas do not factorize, see also Theorem 5.5 in \cite{MV1}, and therefore, the problem remained unsolved. In this paper we present a method to compute the answer systematically.

\medskip

Our idea comes from the reproduction procedure studied in \cite{MV4}.
Let $V_1=V_\lambda$ be the irreducible module of highest weight $\lambda$, let $V_2,\dots,V_n$ be finite-dimensional irreducible modules, and let $l_1,\dots,l_r$ be nonnegative integers, where $r$ is the rank of $\g$. Fix distinct complex numbers $z_1=0,z_1, \dots, z_n$. Consider the Bethe ansatz equation, see \eqref{eq:bae},
associated to these data. Set $V=V_2\otimes\dots\otimes V_n$, denote the highest weight vector of $V$ by  $v^+$, the weight of $v^+$ by $\mu^+$, and set $\mu=\mu^+-\sum_{i=1}^r l_i\alpha_i$. Here $\alpha_i$ are simple roots of $\g$.

Given an isolated solution of the Bethe ansatz equations we can produce two Bethe vectors: one in the space of singular vectors in $V_\la\otimes V$ of weight $\mu+\lambda$ and another one in the space of vectors in $V$ of weight $\mu$. The first Bethe vector, see \eqref{wtf}, is an eigenvector of the standard Gaudin Hamiltonians, see \eqref{eq hamil}, acting in $V_\lambda\otimes V$ and the second Bethe vector is an eigenvector of trigonometric Gaudin Hamiltonians, see \cite{MV4}.
The second vector is a projection of the first vector to the space $v^+\otimes V\simeq V$.

Then the reproduction procedure of \cite{MV4} in the $j$-th direction allows us to construct a new solution of the Bethe ansatz equation associated to new data: representations $V_1=V_{s_j\cdot\la}$,  $V_2,\dots,V_n$ and integers $l_1,\dots,\tilde l_j,\dots,l_r$ so that the new weight
 $\tilde \mu=\mu^+-\sum_{i\neq j} l_i\alpha_i-\tilde l_j\alpha_j$ is given by $\tilde\mu=s_j\mu$.
This construction is quite general, it works for all symmetrizable Kac-Moody algebras provided that the weight $\lambda$ is generic, see Theorem \ref{thm:2.5} below. It gives a bijection between solutions corresponding to weights $\mu$ of $V$ in the same Weyl orbit.

Note that in the case $\mu=\mu^+$, the Bethe ansatz equations are trivial. Therefore, using the trivial solution and the reproduction procedure, we, in principal, can obtain solutions for all weights of the form: $\mu=w\mu^+$. Note also that in the case of the vector representation, $V=V_{\omega_1}$, all weights in $V$ are in the Weyl orbit of $\mu^+=\omega_1$ (with the exception of weight $\mu=0$ in type B). Therefore, we get all the solutions we need that way (the exceptional weight is easy to treat separately).

\medskip

In contrast to \cite{MV4}, we do not have the luxury of generic weight $\lambda$, and we have to check some technical conditions on each reproduction step. It turns out, such checks are easy when going to the trivial solution, but not the other way, see Section \ref{sec lemmas}. We manage to solve the recursion and obtain explicit formulas, see Corollary \ref{thm sol B} for type B, Theorem \ref{thm sol C} for type C and Theorem \ref{thm sol D} for type D. We complete the check using these formulas, see Section \ref{sec check generic}.

\medskip

To each solution of Bethe ansatz, one can associate an oper. For types A, B, C the oper becomes a scalar differential operator with rational coefficients, see \cite{MV2}, and Sections \ref{sec oper}, \ref{C sec}. In fact, the coefficients of this operator are eigenvalues of higher Gaudin Hamiltonians, see \cite{MTV} for type A and \cite{MM} for types B, C. The differential operators for the solutions obtained via the reproduction procedure are closely related. It allows us to give simple formulas for the differential operators related to our solutions, see Propositions \ref{diff B} and \ref{diff C}.  According to \cite{MV2}, the kernel of the differential operator is a space of polynomials with a symmetry, called a self-dual space.
We intend to discuss the self-dual spaces related to our situation in detail elsewhere.

\medskip

The paper is constructed as follows.  In Section  \ref{sec start} we describe the problem and set our notation. We study in detail the case of type B in Sections \ref{sec n=2, B} and \ref{sec B generic}.
In Section \ref{sec n=2, B} we solve the Bethe ansatz equation for $n=2$ when one of the modules is $V_{\omega_1}$. In Section \ref{sec B generic}, we use the results of Section \ref{sec n=2, B}  to show the completeness and simplicity of the spectrum of Gaudin Hamiltonians acting in tensor products where all but one factors are $V_{\omega_1}$, for generic values of $z_i$. In Section \ref{sec C and D} we give the corresponding formulas and statements in types C and D.

\bigskip

{\bf Acknowledgments.} 
This work was partially supported by a grant from the Simons Foundation (\#336826 to Alexander Varchenko and \#353831 to Evgeny Mukhin)."
The research of A.V. is supported by NSF grant DMS-1362924. A.V. thanks the MPI in Bonn for hospitality during his visit.

\section{The Gaudin model and Bethe ansatz}\label{sec start}
\subsection{Simple Lie algebras}
Let $\g$ be a simple Lie algebra over $\C$ with Cartan matrix $A=(a_{i,j})_{i,j=1}^r$. Denote the universal enveloping algebra of $\g$ by $\mc U(\g)$. Let $D=\mathrm{diag}\{d_1,\dots,d_r\}$ be the diagonal matrix with positive relatively prime integers $d_i$ such that $B=DA$ is symmetric.

Let $\h\subset\g$ be the Cartan subalgebra. Fix simple roots $\alpha_1,\dots,\alpha_r$ in $\h^*$. Let $\alpha_1^{\vee},\dots,\alpha_r^{\vee}\in \h$ be the corresponding coroots. Fix a nondegenerate invariant bilinear form $(,)$ in $\g$ such that $(\alpha_i^{\vee},\alpha_j^{\vee})=a_{i,j}/d_j$. Define the corresponding invariant bilinear forms in $\h^*$ such that $(\alpha_i,\alpha_j)=d_ia_{i,j}$. 
We have $\langle \lambda,\alpha_i^{\vee}\rangle=2(\lambda,\alpha_i)/(\alpha_i,\alpha_i)$ for $\lambda\in\h^*$. In particular, $\langle\alpha_j,\alpha_i^{\vee}\rangle=a_{i,j}$. Let $\omega_1,\dots,\omega_r\in\h^*$ be the fundamental weights, $\langle \omega_j,\alpha_i^{\vee}\rangle=\delta_{i,j}$.

Let $\mathcal P=\{\la\in\mathfrak h^*|\langle \la,\alpha_i^{\vee}\rangle\in\mathbb Z\}$ and $\mathcal P^+=\{\la\in\mathfrak h^*|\langle \la,\alpha_i^{\vee}\rangle\in\mathbb Z_{\gge 0}\}$ be the weight lattice and the set of dominant integral weights. The dominance order $>$ on $\mathfrak{h}^*$ is defined by: $\mu>\nu$ if and only if $\mu-\nu=\sum_{i=1}^r a_i\alpha_i$, $a_i\in \Z_{\gge 0}$ for $i=1,\dots,r$.

Let $\rho\in {\mathfrak h}^*$ be such that $\langle \rho,\alpha_i^{\vee}\rangle =1$, $i=1,\dots,r$.  We have $(\rho,\alpha_i)=(\alpha_i,\alpha_i)/2$.

For $\la\in {\mathfrak h}^*$, let $V_\la$ be the irreducible $\g$-module with highest weight $\la$. We
denote $\langle \la,\alpha_i^{\vee}\rangle$ by $\lambda_i$ and sometimes write $V_{(\la_1,\la_2,\dots,\la_r)}$ for $V_\la$.

The Weyl group $\mc W\subset\mathrm{Aut}(\mathfrak h^*)$ is generated by reflections $s_i$, $i=1,\dots,r$,
\[s_i(\la)=\la-\langle \la,\alpha_i^{\vee}\rangle \alpha_i,\quad \la\in\mathfrak h^*.\]We use the notation
\[w\cdot\la=w(\la+\rho)-\rho,\quad w\in\mc W,\la\in\mathfrak h^*,\]
for the shifted action of the Weyl group.

Let $E_1,\dots,E_r\in \n_+$, $H_1,\dots,H_r\in \h$, $F_1,\dots,F_r\in\n_-$ be the Chevalley generators of $\g$.

The coproduct $\Delta:\mc U(\g)\to \mc U(\g)\otimes \mc U(\g)$ is defined to be the homomorphism of algebras such that $\Delta x=1\otimes x+x\otimes 1$, for all $x\in \g$.

Let $(x_i)_{i\in O}$ be an orthonormal basis with respect to the bilinear form $(,)$ in $\g$.

Let $\Omega_0=\sum_{i\in O}x_i^2\in\mc U(\g)$ be the Casimir element. For any $u\in \mc U(\g)$, we have $u\Omega_0=\Omega_0 u$.
Let $\Omega=\sum_{i\in O}x_i\otimes x_i\in \g\otimes \g\subset \mc U(\g)\otimes\mc U(\g)$. For any $u\in \mc U(\g)$, we have $\Delta(u)\Omega=\Omega\Delta(u)$.

The following lemma is well-known, see for example \cite{H}, Ex. 23.4.
\begin{lem}\label{cas eigen}
Let $V_\la$ be an irreducible module of highest weight $\la$. Then $\Omega_0$ acts on $V_\la$ by the constant $(\la+\rho,\la+\rho)-(\rho,\rho)$.\qed
\end{lem}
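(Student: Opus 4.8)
The plan is to exploit centrality together with Schur's lemma and then reduce the computation to the highest weight vector. Since the excerpt records that $u\Omega_0=\Omega_0 u$ for every $u\in\mc U(\g)$, the operator $\Omega_0$ commutes with the $\g$-action on $V_\la$; as $V_\la$ is irreducible, Schur's lemma forces $\Omega_0$ to act by a single scalar $c_\la$. To identify $c_\la$ it then suffices to evaluate $\Omega_0$ on the highest weight vector $v_\la$, and for this I would replace the orthonormal basis $(x_i)_{i\in O}$ by a basis adapted to the triangular decomposition $\g=\n_-\oplus\h\oplus\n_+$.

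Concretely, I would pick an orthonormal basis $h_1,\dots,h_r$ of $\h$ and, for each positive root $\alpha$, vectors $e_\alpha\in\g_\alpha$ and $f_\alpha\in\g_{-\alpha}$ normalized so that $(e_\alpha,f_\alpha)=1$. Since root spaces $\g_\alpha$ and $\g_\beta$ pair nontrivially under $(,)$ only when $\beta=-\alpha$, the vector dual to $e_\alpha$ is $f_\alpha$ and vice versa, while the $h_j$ are self-dual. Writing the Casimir in this dual-basis form (which is independent of the choice of basis) gives
\[
\Omega_0=\sum_{j=1}^r h_j^2+\sum_{\alpha>0}\bigl(e_\alpha f_\alpha+f_\alpha e_\alpha\bigr).
\]
Applying this to $v_\la$ and using $e_\alpha v_\la=0$, I would rewrite $e_\alpha f_\alpha+f_\alpha e_\alpha=2f_\alpha e_\alpha+[e_\alpha,f_\alpha]$, so that only the bracket survives on $v_\la$. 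The standard identity $[e_\alpha,f_\alpha]=(e_\alpha,f_\alpha)\,t_\alpha=t_\alpha$, where $t_\alpha\in\h$ represents $\alpha$ under the form, then yields $(e_\alpha f_\alpha+f_\alpha e_\alpha)v_\la=\la(t_\alpha)\,v_\la=(\la,\alpha)\,v_\la$. Likewise $\sum_j h_j^2\,v_\la=\bigl(\sum_j\la(h_j)^2\bigr)v_\la=(\la,\la)\,v_\la$, since $h_1,\dots,h_r$ is orthonormal.

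Combining the two contributions gives $c_\la=(\la,\la)+\sum_{\alpha>0}(\la,\alpha)$, and the final step is the classical identity $\sum_{\alpha>0}\alpha=2\rho$, which turns the sum into $2(\la,\rho)$. Completing the square, $c_\la=(\la,\la)+2(\la,\rho)=(\la+\rho,\la+\rho)-(\rho,\rho)$, as claimed. I do not expect any serious obstacle here, as the argument is entirely standard. The only points requiring care are the passage to the root-adapted dual basis and the correct normalization in $[e_\alpha,f_\alpha]=t_\alpha$; getting the factor in $\sum_{\alpha>0}\alpha=2\rho$ right is precisely what links the raw computation to the advertised closed form.
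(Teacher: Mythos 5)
Your proof is correct, and it is exactly the standard argument the paper points to: the lemma is stated without proof, with a citation to Humphreys (Ex.\ 23.4), and your computation --- rewriting the Casimir in a dual basis adapted to the triangular decomposition, killing $e_\alpha v_\la$, using $[e_\alpha,f_\alpha]=t_\alpha$ and $\sum_{\alpha>0}\alpha=2\rho$ --- is precisely that textbook proof, with all normalizations handled correctly. The only cosmetic remark is that Schur's lemma is not even needed: since $V_\la$ is generated by $v_\la$ and $\Omega_0$ is central, $\Omega_0(uv_\la)=u(\Omega_0 v_\la)=c_\la\,uv_\la$ already forces scalar action on all of $V_\la$, which also covers infinite-dimensional highest weight modules without invoking Dixmier's version of Schur.
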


Let $V$ be a $\g$-module.
Let $\mathrm{Sing}~V=\{v\in V~|~\n_+v=0\}$ be the subspace of singular vectors in $V$.
For $\mu\in\h^*$ let $V[\mu]=\{v\in V~|~hv=\langle \mu,h \rangle v\}$  be the subspace of $V$ of vectors of weight $\mu$.
Let $\mathrm{Sing}~V[\mu]=(\mathrm{Sing}~V)\cap (V[\mu])$ be the subspace of singular vectors in $V$ of weight $\mu$.

\subsection{Gaudin Model}
Let $n$ be a positive integer and $\bm{\La}=(\La_1,\dots,\La_n)$, $\La_i\in\h^*$, a sequence of weights. Denote by $V_{\bm \La}$ the $\g$-module $V_{\La_1}\otimes\dots\otimes V_{\La_n}$.

If $X\in\mathrm{End}(V_{\La_i})$, then we denote by $X^{(i)}\in\mathrm{End}(V_{\bm\La})$ the operator $\mathrm{id}^{\otimes i-1}\otimes X\otimes \mathrm{id}^{\otimes n-i}$ acting non-trivially on the $i$-th factor of the tensor product. If $X=\sum_k X_k\otimes Y_k\in \mathrm{End}(V_{\La_i}\otimes V_{\La_j})$, then we set $X^{(i,j)}=\sum_k X_k^{(i)}\otimes Y_k^{(j)}\in \mathrm{End}(V_{\bm\La})$.

Let $\bm z=(z_1,\dots,z_n)$ be a point in $\C^n$ with distinct coordinates. Introduce linear operators $\cH_1(\bm z),\dots,\cH_n(\bm z)$ on $V_{\bm\La}$ by the formula
\beq\label{eq hamil}
\cH_i(\bm z)=\sum_{j,\ j\ne i}\frac{\Omega^{(i,j)}}{z_i-z_j},\quad i=1,\dots,n.
\eeq
The operators $\cH_1(\bm z),\dots,\cH_n(\bm z)$ are called the \emph{Gaudin Hamiltonians} of the Gaudin model associated with $V_{\bm\La}$. One can check that the Hamiltonians commute, $[\cH_i(\bm z),\cH_j(\bm z)]=0$ for all $i,j$. Moreover, the Gaudin Hamiltonians commute with the action of $\g$, $[\cH_i(\bm z),x]=0$ for all $i$ and $x\in \g$. Hence for any $\mu\in\h^*$, the Gaudin Hamiltonians preserve the subspace $\mathrm{Sing}~ V_{\bm\La}[\mu]\subset V_{\bm\La}$.

\subsection{Bethe ansatz}\label{sec:Bethe ansatz}
Fix a sequence of weights $\bm{\Lambda}=(\Lambda_i)_{i=1}^n$, $\Lambda_i\in\h^*$, and a sequence of non-negative integers $\bm{l}=(l_1,\dots,l_r)$. Denote $l=l_1+\dots+l_r$, $\La=\La_1+\dots+\La_n$, and $\alpha(\bm l)=l_1\alpha_1+\dots+l_r\alpha_r$.

Let $c$ be the unique non-decreasing function from $\{1,\dots,l\}$ to $\{1,\dots,r\}$, such that $\# c^{-1}(i)=l_i$ for $i=1,\dots,r$. The \emph{master function} $\Phi(\bm t,\bm z,\bm\La,\bm l)$ is defined by
\[
\Phi(\bm t,\bm z,\bm\La,\bm l)=\prod_{1\lle i<j\lle n}(z_i-z_j)^{(\La_i,\La_j)}\prod_{i=1}^l\prod_{s=1}^n(t_i-z_s)^{-(\alpha_{c(i)},\La_s)}\prod_{1\lle i<j\lle l}(t_i-t_j)^{(\alpha_{c(i)},\alpha_{c(j)})}.
\]
The function $\Phi$ is a function of complex variables $\bm t=(t_1,\dots,t_l)$, $\bm z=(z_1,\dots,z_n)$, weights $\bm\La$, and discrete parameters $\bm l$. The main variables are $\bm t$, the other variables will be considered as parameters.

We call $\La_i$ {\it the weight at a point $z_i$}, and we also call $c(i)$ {\it the color of variable $t_i$}.

A  point $\bm t\in\C^l$ is called a \emph{critical point associated to $\bm z,\bm \La,\bm l$}, if the following system of algebraic equations is satisfied,
\beq\label{eq:bae}
-\sum_{s=1}^n\frac{(\alpha_{c(i)},\La_s)}{t_i-z_s}+\sum_{j,j\ne i}\frac{(\alpha_{c(i)},\alpha_{c(j)})}{t_i-t_j}=0,\quad i=1,\dots,l.
\eeq
In other words, a point $\bm t$ is a critical point if
\[
\left(\Phi^{-1}\frac{\pa\Phi}{\pa t_{i}}\right)(\bm t)=0,\quad \text{for}~ i=1,\dots,l.
\]
Equation (\ref{eq:bae}) is called the \emph{Bethe ansatz equation associated to $\bm \La, \bm z,\bm l$}. 

By definition, if $\bm t=(t_1,\dots,t_l)$ is a critical point and $(\alpha_{c(i)},\alpha_{c(j)})\ne 0$ for some $i,j$, then $t_i\ne t_j$. Also if $(\alpha_{c(i)},\La_s)\ne0$ for some $i,s$, then $t_i\ne z_s$.

Let $\Sigma_l$ be the permutation group of the set $\{1,\dots,l\}$. Denote by $\bm{\Sigma_l}\subset \Sigma_l$ the subgroup
of all permutations preserving the level sets of the function $c$. The subgroup $\bm{\Sigma_l}$ is
isomorphic to $\Sigma_{l_1}\times \dots \times \Sigma_{l_r}$. The action of the subgroup $\bm{\Sigma_l}$ preserves the set of critical points of the master function. All orbits of $\bm{\Sigma_l}$ on the critical set have the same cardinality $l_1 !\dots l_r!$. In what follows we do not distinguish between critical points in the same $\bm{\Sigma_l}$-orbit.

The following lemma is known.
\begin{lem}\cite{MV2} If weight $\La-\alpha(\bs l)$ is dominant integral, then the set of critical points is finite. $\qed$

\end{lem}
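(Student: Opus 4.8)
The plan is to turn \eqref{eq:bae} into a finite system of polynomial equations, identify the set of critical orbits with a quasi-affine variety, and reduce finiteness to the absence of critical configurations ``at infinity,'' where the hypothesis $\La-\alpha(\bm l)\in\mathcal P^+$ is exactly what is needed. First I would eliminate the symmetry $\bm{\Sigma_l}$ by recording, for each color $a=1,\dots,r$, the monic polynomial $y_a(x)=\prod_{c(i)=a}(x-t_i)$ of degree $l_a$. A $\bm{\Sigma_l}$-orbit of points $\bm t$ is the same datum as the tuple $(y_1,\dots,y_r)$, so the set of critical orbits is naturally a subset $X$ of the affine space $\mathbb A=\C^{l_1}\times\dots\times\C^{l_r}\cong\C^{l}$ of coefficient tuples. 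Multiplying \eqref{eq:bae} by the common denominator $\prod_s(t_i-z_s)\prod_{j\ne i}(t_i-t_j)$ expresses the Bethe ansatz equations as symmetric polynomial relations in the roots, hence as polynomial relations in the coefficients of the $y_a$; thus $X$ is a locally closed algebraic subvariety of $\mathbb A$. Since the projective closure $\bar X\subset\mathbb P^{l}$ of a positive-dimensional affine variety necessarily meets the hyperplane at infinity, it suffices to show that $\bar X$ has no points at infinity.

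A point at infinity of $\bar X$ corresponds to a one-parameter family of critical configurations along which at least one root $t_i$ escapes to $\infty$, with the relevant elementary symmetric functions blowing up accordingly. The plan is to analyze such a family by rescaling: group the escaping roots by color and by their common order of growth, write $t_i=\tau\,s_i+o(\tau)$ on each scale, and extract the leading balance of \eqref{eq:bae}. To leading order on each scale the finite points $z_s$ and the slower (or bounded) roots collapse to a single effective point, and the balance becomes the critical point system of a one-point master function in the rescaled variables $s_i$, with a single weight assembled from $\La$ and the collapsed roots.

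Here the dominant integral hypothesis enters: the numbers $\langle\La-\alpha(\bm l),\alpha_a^{\vee}\rangle$ are then nonnegative, they fix the sign of the residue at infinity of each $\Phi^{-1}\pa\Phi/\pa t_i$, and this sign forbids the leading-order cancellation required of a critical configuration. Module-theoretically the same obstruction reflects the fact that the irreducible module $V_\La$ carries no singular vector strictly below its highest weight, so the degenerate one-point system admits no admissible solution. Consequently no nonempty collection of roots can escape to $\infty$, the closure $\bar X$ meets no point at infinity, and therefore $\dim X=0$, i.e. the set of critical points is finite.

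The main obstacle is the asymptotic bookkeeping in the middle step: one must treat all partitions of the escaping roots into color classes and growth rates simultaneously, keep track of which bounded roots contribute to the effective weight on each scale, and verify that dominance obstructs the leading balance in every case rather than only in the totally escaping one. Organizing this multiscale analysis---rather than any single residue computation---is where the real work lies; once it is in place, the reduction to ``no points at infinity'' closes the argument.
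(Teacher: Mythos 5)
The paper does not prove this lemma at all: it is quoted from \cite{MV2} with a citation and $\qed$, so your proposal can only be judged on its own merits as a proof, not against an argument in the text. Your reduction steps are sound: recording the monic polynomials $y_a(x)=\prod_{c(i)=a}(x-t_i)$ identifies the set of critical $\bm{\Sigma_l}$-orbits with a quasi-affine subvariety $X\subset\C^{l}$, and if $\dim X\gge 1$ then its projective closure $\bar X$, having the same dimension, must meet the hyperplane at infinity; so finiteness does reduce to excluding one-parameter families of critical points with escaping roots (with the caveat, which you flag, that along such a family bounded roots may simultaneously collide with each other or with the $z_s$, so the degenerations to be excluded are genuinely multiscale).

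The genuine gap is that the only place the hypothesis $\La-\alpha(\bm l)\in\cP^+$ is spent is exactly the step you defer, and the two mechanisms you sketch for it do not work as stated. First, ``dominance fixes the sign of the residue at infinity, and this sign forbids cancellation'' is not an argument over $\C$: the rescaled leading coefficients $s_i$ are complex numbers, so no term-by-term sign consideration can forbid cancellation; what is needed is a scalar identity of definite sign. For the top-scale cluster $S$ with color content $\beta=\sum_{i\in S}\alpha_{c(i)}\ne 0$, the limiting one-point master function $\Phi_\infty$ is homogeneous, and the Euler identity $\sum_{i\in S}s_i\,\pa_i\ln\Phi_\infty=\deg\Phi_\infty$ shows that a critical point of $\Phi_\infty$ forces $(\beta,\La-\alpha(\bm l)+\rho)+(\beta,\beta)/2=0$, which is impossible since dominance gives $(\beta,\La-\alpha(\bm l))\gge 0$ while $(\beta,\rho)+(\beta,\beta)/2>0$; some identity of this kind, propagated through all scales and sub-clusters, is the actual content of the lemma, and your outline contains no substitute for it. Second, the representation-theoretic shortcut --- that the one-point system has no solutions because $\mathrm{Sing}\,V_\La[\La-\beta]=0$ for $\beta\ne0$ --- cannot be invoked: the nonvanishing and singularity results for Bethe vectors (Theorems \ref{thm:bvnonzero} and \ref{thm:bveigen}, i.e.\ \cite{V}, \cite{RV}) apply to \emph{isolated} critical points, whereas the critical points of the homogeneous $\Phi_\infty$ come in $\C^\times$-orbits and are never isolated. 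So what you have is a correct reduction wrapped around an unproved core, and the core is precisely where the dominance hypothesis must do its work.
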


\subsection{Weight Function}Consider highest weight irreducible $\g$-modules $V_{\La_1},\dots,V_{\La_n}$, the tensor product $V_{\bm\La}$, and its weight subspace $V_{\bm\La}[\La-\alpha(\bm l)]$. Fix a highest weight vector $v_{\La_i}$ in $V_{\La_i}$ for $i=1,\dots,n$.

Following \cite{SV}, we consider a rational map
\[\omega:\C^n\times\C^l\to V_{\bm\La}[\La-\alpha(\bm l)]\]\
called \emph{the canonical weight function}.

Let $P(\bm l,n)$ be the set of sequences $I=(i_1^1,\dots,i_{j_1}^1;\dots;i_1^n,\dots,i_{j_n}^n)$ of integers in $\{1,\dots,r\}$ such that for all $i=1,\dots,r$, the integer $i$ appears in $I$ precisely $l_i$ times. For $I\in P(\bm l,n)$, and a permutation $\sigma\in\Sigma_l$, set $\sigma_1(i)=\sigma(i)$ for $i=1,\dots,j_1$ and $\sigma_s(i)=\sigma(j_1+\dots+j_{s-1}+i)$ for $s=2,\dots,n$ and $i=1,\dots,j_s$. Define
\[
\Sigma(I)=\{\sigma\in\Sigma_l~|~c(\sigma_s(j))=i_s^j~\text{for }s=1,\dots,n\text{ and }j=1,\dots,j_s\}.
\]
To every $I\in P(\bm l,n)$ we associate a vector
\[F_Iv=F_{i_1^1}\dots F_{i_{j_1}^1}v_{\La_1}\otimes\dots\otimes F_{i_1^n}\dots F_{i_{j_n}^n}v_{\La_n}\]
in $V_{\bm\La}[\La-\alpha(\bm l)]$, and rational functions
\[
\omega_{I,\sigma}=\omega_{\sigma_1(1),\dots,\sigma_1(j_1)}(z_1)\dots\omega_{\sigma_n(1),\dots,\sigma_n(j_n)}(z_n),
\]labeled by $\sigma\in\Sigma(I)$, where
\[\omega_{i_1,\dots,i_j}(z)=\frac{1}{(t_{i_1}-t_{i_2})\dots(t_{i_{j-1}}-t_{i_j})(t_{i_j}-z)}.\]

We set
\beq\label{wtf}
\omega(\bm z,\bm t)=\sum_{I\in P(\bm l,n)}\sum_{\sigma\in\Sigma(I)}\omega_{I,\sigma}F_Iv.
\eeq

Let $\bm t\in\C^l$ be a critical point of the master function $\Phi(\cdot,\bm z,\bm\La,\bm l)$. Then the value of the weight function $\omega(\bm z,\bm t)\in V_{\bs \La} [\La-\alpha(\bs l)]$ is called {\it the Bethe vector}. Note that the Bethe vector does not depend on a choice of the representative in the $\bm{\Sigma_l}$-orbit of critical points.

The following facts about Bethe vectors are known.
Assume that $\bm z\in \C^n$ has distinct coordinates.
Assume that $\bm t\in\C^l$ is an isolated critical point of the master function $\Phi(\cdot,\bm z,\bm\La,\bm l)$.

\begin{lem}\cite{MV1}
The Bethe vector $\omega(\bm z,\bm t)$ is well defined.\hfill$\square$
\end{lem}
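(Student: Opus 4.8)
The plan is to show that the rational map $\omega(\bm z,\cdot)$ defined in \eqref{wtf} is regular at every critical point by locating its polar divisor and checking that it is disjoint from the critical set. Regard $\bm z$ as fixed and $\bm t$ as the variable, so that $\omega(\bm z,\bm t)$ is a $V_{\bm\La}[\La-\alpha(\bm l)]$-valued rational function whose only possible poles lie along the hyperplanes $t_a=z_s$ and $t_a=t_b$ ($a\ne b$); each such pole is simple, since the offending factor enters each denominator $\omega_{i_1,\dots,i_j}(z)$ at most once. By the remark following \eqref{eq:bae}, a critical point satisfies $t_a\ne z_s$ whenever $(\alpha_{c(a)},\La_s)\ne0$ and $t_a\ne t_b$ whenever $(\alpha_{c(a)},\alpha_{c(b)})\ne0$. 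Hence it suffices to prove two containments: $\omega$ is regular along $t_a=z_s$ once $(\alpha_{c(a)},\La_s)=0$, and along $t_a=t_b$ once $(\alpha_{c(a)},\alpha_{c(b)})=0$.

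For the poles along $t_a=z_s$, I would note that the factor $(t_a-z_s)^{-1}$ occurs in $\omega_{I,\sigma}$ exactly when $t_a$ is the variable acting rightmost in the word at the point $z_s$, i.e.\ when $\sigma_s(j_s)=a$. For every such $I,\sigma$ the associated vector $F_Iv$ has $s$-th tensor factor of the form $(\cdots)\,F_{c(a)}v_{\La_s}$. Since $V_{\La_s}$ is irreducible of highest weight $\La_s$, the condition $(\alpha_{c(a)},\La_s)=0$ is equivalent to $\langle\La_s,\alpha_{c(a)}^{\vee}\rangle=0$, which forces $F_{c(a)}v_{\La_s}=0$. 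Thus the residue of $\omega$ along $t_a=z_s$, being a sum of such vectors, vanishes, and $\omega$ is regular there.

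For the poles along $t_a=t_b$, first observe that $(t_a-t_b)^{-1}$ appears only when $a$ and $b$ are assigned to the same point $z_s$ and occupy consecutive slots of its word; in particular variables attached to different points contribute no such pole. Computing the residue along $t_a=t_b$ then amounts to summing, over all $I$ and $\sigma$, the contributions in which $a,b$ are adjacent, each paired with the contribution obtained by transposing them (another valid term of the total sum over $I\in P(\bm l,n)$ and $\sigma\in\Sigma(I)$). The two paired words differ only by interchanging $F_{c(a)}$ and $F_{c(b)}$ in adjacent position, while the opposite signs of $(t_a-t_b)^{-1}$ and $(t_b-t_a)^{-1}$ combine the two vectors into the commutator $[F_{c(a)},F_{c(b)}]$ acting in the $s$-th factor, all remaining data being evaluated at $t_a=t_b$ and hence matching. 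When $(\alpha_{c(a)},\alpha_{c(b)})=0$ we have $a_{c(a),c(b)}=0$, so the Serre relation gives $[F_{c(a)},F_{c(b)}]=0$ and the residue vanishes, proving regularity along $t_a=t_b$.

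Combining the two steps, the polar divisor of $\omega(\bm z,\cdot)$ is contained in $\bigcup\{t_a=z_s:(\alpha_{c(a)},\La_s)\ne0\}\cup\bigcup\{t_a=t_b:(\alpha_{c(a)},\alpha_{c(b)})\ne0\}$, which a critical point avoids; hence $\omega(\bm z,\bm t)$ is finite, i.e.\ well defined (its independence of the representative of the $\bm{\Sigma_l}$-orbit being the symmetry already recorded before \eqref{wtf}). I expect the main obstacle to be the residue bookkeeping along $t_a=t_b$ in the presence of long words and simultaneous collisions: one must organize the sum over $\sigma\in\Sigma(I)$ so that non-adjacent occurrences contribute nothing while adjacent ones pair off into commutators, and verify that the boundary interaction with the $(t_{i_j}-z_s)^{-1}$ factor does not spoil the cancellation. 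This is handled by an induction on word length using the Serre relations, the remainder being a direct residue computation.
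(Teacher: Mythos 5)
Your proof is correct. Note that the paper itself contains no argument for this lemma: it is quoted with a reference to the cited Mukhin--Varchenko paper, so what you have written supplies a genuine proof where the text has only a citation, and it is essentially the standard residue-cancellation argument from the literature. Two refinements are worth recording. First, your treatment of the hyperplanes $t_a=z_s$ is actually stronger than a residue computation: a term of \eqref{wtf} contains the factor $(t_a-z_s)^{-1}$ only if $\sigma_s(j_s)=a$, in which case the $s$-th tensor slot of $F_Iv$ ends in $F_{c(a)}v_{\La_s}$, and this vanishes whenever $\langle\La_s,\alpha_{c(a)}^{\vee}\rangle=0$ --- so every pole-bearing term is identically zero, not merely residue-free. (Since the lemma carries no dominance assumption on $\La_s$, you should justify $F_{c(a)}v_{\La_s}=0$ for arbitrary $\La_s\in\h^*$ with $\langle\La_s,\alpha_{c(a)}^{\vee}\rangle=0$: in that case $F_{c(a)}v_{\La_s}$ is a singular vector of the Verma module, hence dies in the irreducible quotient.) Second, the obstacle you flag at the end about long words and simultaneous collisions is not a real one and needs no induction: the polar set of a rational function on $\C^l$ is a divisor, so regularity at the critical point follows once each hyperplane $t_a=t_b$ with $(\alpha_{c(a)},\alpha_{c(b)})=0$ is shown not to lie in the polar divisor, and for that it suffices to compute the residue at a generic point of the hyperplane, where no other collision occurs. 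There your pairing argument closes cleanly: the transposition of the adjacent letters $a,b$ is a fixed-point-free involution on the pole-bearing pairs $(I,\sigma)$ (fixed points would need $c(a)=c(b)$, excluded at critical points since $(\alpha_i,\alpha_i)\ne 0$), all other factors of $\omega_{I,\sigma}$ match on the hyperplane --- including the boundary case where $b$ is last in its word, since $(t_b-z_s)$ and $(t_a-z_s)$ agree there --- and the paired residues assemble into $[F_{c(a)},F_{c(b)}]=0$ by the Serre relations, as $(\alpha_{c(a)},\alpha_{c(b)})=d_{c(a)}a_{c(a),c(b)}$ forces $a_{c(a),c(b)}=0$.
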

\begin{thm}\cite{V}\label{thm:bvnonzero}
The Bethe vector $\omega(\bm z,\bm t)$ is non-zero.\hfill$\square$
\end{thm}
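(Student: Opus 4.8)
The plan is to pair the Bethe vector with itself under the contravariant (Shapovalov) form and to show that this pairing is nonzero at a critical point; since any bilinear form vanishes on the zero vector, nonvanishing of the pairing forces $\omega(\bm z,\bm t)\neq 0$.

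First I would set up the form. On each irreducible module $V_{\La_i}$ there is a unique symmetric bilinear form $S_i$ normalized by $S_i(v_{\La_i},v_{\La_i})=1$ and satisfying the contravariance relation $S_i(E_k u,w)=S_i(u,F_k w)$ for all $k$; because $V_{\La_i}$ is irreducible, $S_i$ is nondegenerate. Let $S=S_1\otimes\cdots\otimes S_n$ be the induced form on $V_{\bm\La}$. Distinct weight spaces are $S$-orthogonal, and $S$ restricts to a nondegenerate form on $V_{\bm\La}[\La-\alpha(\bm l)]$. Using contravariance repeatedly, the pairings $S(F_Iv,F_Jv)$ of the monomial vectors appearing in \eqref{wtf} are computed by commuting the lowering generators of $F_J$ back past raising generators to the highest weight vectors, so that $S(\omega(\bm z,\bm t),\omega(\bm z,\bm t))$ becomes an explicit sum of products of the rational functions $\omega_{I,\sigma}$ weighted by structure constants of $\g$.

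The heart of the argument is the norm formula: at a critical point $\bm t$ this sum collapses to
\[
S\big(\omega(\bm z,\bm t),\omega(\bm z,\bm t)\big)=c(\bm z,\bm t)\,\det\!\Big(\frac{\partial^2\log\Phi}{\partial t_i\,\partial t_j}(\bm z,\bm t)\Big)_{i,j=1}^{l},
\]
where $c(\bm z,\bm t)$ is an explicit factor whose only possible zeros and poles occur along the diagonals $t_i=t_j$ and $t_i=z_s$, which a critical point avoids by the remarks following \eqref{eq:bae}. The Bethe ansatz equations \eqref{eq:bae} enter precisely to cancel the terms that would otherwise obstruct this collapse. For a nondegenerate critical point the Hessian on the right is nonzero, whence $S(\omega,\omega)\neq 0$ and the theorem follows.

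I expect two steps to be the main obstacles. The first, and most technical, is establishing the norm formula itself: it requires an inductive bookkeeping of the contravariant pairings together with substitution of the critical-point equations, and is the genuine computational core (this is where the cited work \cite{V} does the real work). The second is conceptual: an isolated critical point need not be nondegenerate, so the Hessian may vanish. To handle this I would argue by deformation — perturb the parameters $\bm z$ within the locus of distinct coordinates so that the isolated critical point splits into nondegenerate ones, and use the rational dependence of $\omega(\bm z,\bm t)$ on $(\bm z,\bm t)$ together with well-definedness at isolated points (the Lemma of \cite{MV1}) to transport nonvanishing back to the original point. Controlling this limit, rather than the algebra of the norm formula, is the delicate point.
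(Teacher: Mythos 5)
The paper itself gives no proof of this theorem---it is quoted from \cite{V}---so the comparison is with the argument given there. The first half of your plan is sound and standard: the contravariant form $S$ is well defined and nondegenerate on each weight space, and the norm identity $S(\omega(\bm z,\bm t),\omega(\bm z,\bm t))=c(\bm z,\bm t)\det\bigl(\partial^2\log\Phi/\partial t_i\partial t_j\bigr)$ at a critical point is precisely the Mukhin--Varchenko Hessian formula of \cite{MV1}; it proves the theorem whenever the critical point is \emph{nondegenerate}. The genuine gap is your treatment of the degenerate case by deformation. Nonvanishing is an open condition: it propagates from a point to nearby points, not from nearby points back to a limit. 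When you perturb $\bm z$ so that the degenerate isolated critical point splits into nondegenerate points $\bm t_i(\epsilon)$, each Bethe vector $\omega(\bm z_\epsilon,\bm t_i(\epsilon))$ is indeed nonzero, and by well-definedness these converge to $\omega(\bm z_0,\bm t_0)$; but a limit of nonzero vectors can perfectly well be zero (compare the scalar family $\epsilon\mapsto\epsilon$), so no conclusion can be transported back. Rationality of $\omega$ in $(\bm z,\bm t)$ does not help, since a nonzero rational vector-valued function may vanish at a given point. Worse, your own norm formula shows the method is blind exactly where you need it: at a degenerate critical point the Hessian determinant vanishes, so $S(\omega,\omega)=0$ there, and since $S$ is a complex symmetric (not positive definite) form, zero norm is fully compatible with a nonzero vector.

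The proof in \cite{V} closes this gap by refining the single norm identity to a statement over the whole local algebra $A_{\bm t}$ of the isolated critical point. One constructs a linear map from $A_{\bm t}$ to the relevant weight space (built from derivatives of the weight function along the critical locus) under which the Shapovalov form pulls back to the Grothendieck residue bilinear form on $A_{\bm t}$. The residue form is nondegenerate by local duality, so the map is injective; the Bethe vector $\omega(\bm z,\bm t)$ is the image of the unit $[1]\in A_{\bm t}$ and is therefore nonzero, with no nondegeneracy hypothesis on the critical point. In the nondegenerate case $A_{\bm t}\simeq\C$ and this collapses to the Hessian formula you invoke, so your argument is the correct special case; but the degenerate case requires the local-algebra mechanism, not a limiting argument, and as written your Step handling it would fail.
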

\begin{thm}\cite{RV}\label{thm:bveigen}
The Bethe vector $\omega(\bm z,\bm t)$ is singular,
$\omega(\bm z,\bm t)\in\mathrm{Sing}~V_{\bm\La}[\La-\alpha(\bm l)]$. Moreover, $\omega(\bm z,\bm t)$ is a common eigenvector of the Gaudin Hamiltonians,
$$\cH_i(\bm z)\omega(\bm z,\bm t)=\left(\Phi^{-1}\frac{\partial \Phi}{\partial z_i}\right)(\bm t,\bm z)\omega(\bm z,\bm t), \qquad i=1,\dots,n.$$ \hfill$\square$
\end{thm}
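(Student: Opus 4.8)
The plan is to reduce both assertions to two identities satisfied by the weight function $\omega(\bm z,\bm t)$ of \eqref{wtf} \emph{for all} $\bm t$, and to invoke the critical point equations \eqref{eq:bae} only at the very end. These identities are the core of the Schechtman--Varchenko weight-function formalism, and they originate from the hypergeometric integral representation $I(\bm z)=\int_\gamma \Phi^{1/\kappa}\,\omega(\bm z,\bm t)\,d\bm t$ of solutions of the Knizhnik--Zamolodchikov equations $\kappa\,\pa_{z_i}I=\cH_i(\bm z)\,I$; one can, however, verify them directly as rational-function identities, which is the route I would take to keep the argument self-contained.

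First I would prove that $\omega(\bm z,\bm t)$ is singular. The key is the identity
$$
E_j\,\omega(\bm z,\bm t)=\sum_{i:\,c(i)=j}\left(\Phi^{-1}\frac{\pa\Phi}{\pa t_i}\right)(\bm t)\;\omega_i(\bm z,\bm t),\qquad j=1,\dots,r,
$$
where $\omega_i$ is the weight function attached to the data with $l_j$ decreased by one, obtained by deleting the variable $t_i$ of color $j$. I would establish this by applying $E_j$ termwise to \eqref{wtf}, using $[E_j,F_k]=\delta_{j,k}H_j$ to move $E_j$ past the strings $F_{i_1^s}\cdots F_{i_{j_s}^s}$ acting on each $v_{\La_s}$, and then reorganizing the resulting sum over $P(\bm l,n)$ and $\Sigma(I)$ against the explicit rational coefficients $\omega_{I,\sigma}$. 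The rational functions collapse, by partial-fraction telescoping, exactly into the coefficients $\Phi^{-1}\pa_{t_i}\Phi$, i.e.\ the left-hand sides of \eqref{eq:bae}. Since $\bm t$ is a critical point, every such coefficient vanishes, so $E_j\,\omega=0$ for all $j$ and hence $\omega(\bm z,\bm t)\in\mathrm{Sing}\,V_{\bm\La}[\La-\alpha(\bm l)]$.

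Next I would establish the eigenvalue formula. Expanding $\Omega=\sum_a x_a\otimes x_a$ in the orthonormal basis, I would act with $\cH_i(\bm z)=\sum_{j\ne i}(z_i-z_j)^{-1}\Omega^{(i,j)}$ on \eqref{wtf} and commute the Casimir pieces through the lowering strings. The diagonal ($\h$) contributions reproduce the first two factors of $\Phi$ and combine into $(\Phi^{-1}\pa_{z_i}\Phi)\,\omega$, while the off-diagonal ($\n_\pm$) contributions regroup --- as in the singularity computation, or equivalently by integration by parts in the integral $I(\bm z)$ --- into a combination of terms each proportional to some coefficient $\Phi^{-1}\pa_{t_k}\Phi$ (that is, to $E_{c(k)}$ acting on a reduced weight function). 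The upshot is an identity of the shape
$$
\cH_i(\bm z)\,\omega(\bm z,\bm t)=\left(\Phi^{-1}\frac{\pa\Phi}{\pa z_i}\right)(\bm t,\bm z)\,\omega(\bm z,\bm t)+\sum_k g_{i,k}(\bm z,\bm t)\,\left(\Phi^{-1}\frac{\pa\Phi}{\pa t_k}\right)(\bm t),
$$
valid for all $\bm t$. Evaluating at a critical point kills the correction sum by \eqref{eq:bae}, yielding the asserted eigenrelation $\cH_i(\bm z)\,\omega=(\Phi^{-1}\pa_{z_i}\Phi)\,\omega$.

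The main obstacle is the singularity identity: the honest verification that applying $E_j$ to the large sum \eqref{wtf} telescopes, after partial fractions, precisely into the Bethe-equation coefficients. The bookkeeping over the index set $P(\bm l,n)$ together with the cosets $\Sigma(I)$ is delicate, and it is here that the structure of the coefficients $\omega_{i_1,\dots,i_j}(z)$ is used in full. Once this identity and the parallel regrouping in the Hamiltonian computation are in place, both claims of the theorem follow formally from \eqref{eq:bae}.
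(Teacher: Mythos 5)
This theorem is not proved in the paper at all---it is stated with a reference to \cite{RV} (the weight-function formalism going back to \cite{SV})---and your proposal is an accurate outline of exactly that cited argument: two identities for $\omega(\bm z,\bm t)$ valid for arbitrary $\bm t$, one expressing $E_j\,\omega(\bm z,\bm t)$ and one expressing $\cH_i(\bm z)\,\omega(\bm z,\bm t)$ modulo terms with the Bethe-equation coefficients $\bigl(\Phi^{-1}\partial_{t_k}\Phi\bigr)(\bm t)$ as factors, which vanish upon specializing $\bm t$ to a critical point. So your route coincides with the proof the paper delegates to the literature; the only caveat is that the telescoping verifications you defer as ``delicate bookkeeping'' are precisely the nontrivial content of \cite{RV} and \cite{SV}.
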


\subsection{Polynomials representing critical points}\label{sec2.5}
Let $\bm t=(t_1,\dots,t_l)$ be a critical point of a master function $\Phi(\bm t,\bm z,\bm\La,\bm l)$. Introduce a sequence of polynomials  $\bm y=(y_1(x),\dots,y_r(x))$ in a variable $x$ by the formula
\[
y_i(x)=\prod_{j,c(j)=i}(x-t_{j}).
\]
We say that the $r$-tuple of polynomials $\bm y$ \emph{represents a critical point $\bs t$} of the master function $\Phi(\bm t,\bm z,\bm\La,\bm l)$.
Note that the $r$-tuple $\bm y$ does not depend on a choice of the representative in the $\bm{\Sigma_l}$-orbit of the critical point $\bm t$.

We have $l=\sum_{i=1}^r\mathrm{deg}~y_i=\sum_{i=1}^rl_i$. We call $l$ {\it the length of $\bm y$}. We use notation $\bm y^{(l)}$ to indicate the length of $\bm y$.


Introduce functions
\beq\label{eq T}
T_i(x)=\prod_{s=1}^n(x-z_s)^{\langle\Lambda_s,\alpha_i^{\vee}\rangle},\quad i=1,\dots,r.
\eeq

We say that a given $r$-tuple of polynomials $\bm y\in\bm P(\C[x])^r$ is \emph{generic with respect to $\bs \La, \bs z$} if
\begin{description}
  \item[G1] polynomials $y_i(x)$ have no multiple roots;
  \item[G2] roots of $y_i(x)$ are different from roots and singularities of the function $T_i$;
  \item[G3] if $a_{ij}<0$ then polynomials $y_i(x)$, $y_j(x)$ have no common roots.
\end{description}

If $\bm y$ represents a critical point of $\Phi$, then $\bm y$ is generic.

\medskip

Following \cite{MV4}, we
reformulate the property of $\bm y$ to represent a critical point for the case when all but one weights are dominant integral.

We denote by $W(f,g)$ the Wronskian of functions $f$ and $g$, $W(f,g)=f'g-fg'$.

\begin{thm}\cite{MV4}\label{thm:2.5}
Assume that $\bs z\in\C^n$ has distinct coordinates and $z_1=0$. Assume that $\Lambda_i\in\mathcal P^+$, $i=2,\dots,n$.
A generic $r$-tuple $\bm y$ represents a critical point associated to $\bs \La,\bs z,\bs l$ if and only if for every $i=1,\dots,r$ there exists a polynomial $\tilde y_i$ satisfying
\beq\label{3}
W(y_i,x^{\langle \La_1+\rho,\alpha_i^\vee\rangle} \tilde y_i)=T_i\prod_{j\ne i}y_j^{-\langle \alpha_{j},\alpha_i^{\vee}\rangle}.
\eeq
Moreover, if the $r$-tuple $\bm {\tilde y_i}=(y_1,\dots,\tilde y_i,\dots,y_r)$ is generic, then it represents a critical point associated to data $(s_i\cdot\La_1,\La_2,\dots,\La_n), \bs z, \bm{l_i}$, where $\bm{l_i}$ is determined by equation $$\La-\La_1-\alpha(\bm{l_i})=s_i(\La-\La_1-\alpha(\bs l)).$$
\hfill$\square$
\end{thm}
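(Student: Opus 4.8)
The plan is to reduce the statement to a color-by-color analysis of \eqref{eq:bae} and to read that equation off as a residue condition for an explicit rational function. Fix a color $i$ and a variable $t_k$ with $c(k)=i$. Dividing \eqref{eq:bae} by $d_i$ and using $(\alpha_i,\La_s)=d_i\langle\La_s,\alpha_i^\vee\rangle$ and $(\alpha_i,\alpha_{c(j)})=d_i\langle\alpha_{c(j)},\alpha_i^\vee\rangle$ together with $y_m'/y_m=\sum_{c(j)=m}(x-t_j)^{-1}$, $T_i'/T_i=\sum_s\langle\La_s,\alpha_i^\vee\rangle(x-z_s)^{-1}$ and $y_i''(t_k)/y_i'(t_k)=2\sum_{j\ne k,\,c(j)=i}(t_k-t_j)^{-1}$, the equation at $t_k$ becomes
\[
\frac{T_i'(t_k)}{T_i(t_k)}-\sum_{j\ne i}a_{i,j}\,\frac{y_j'(t_k)}{y_j(t_k)}=2\sum_{j\ne k,\,c(j)=i}\frac{1}{t_k-t_j}.
\]
Setting $p=T_i\prod_{j\ne i}y_j^{-a_{i,j}}$ and $\psi_i=p/y_i^{\,2}$, I would observe that, writing $y_i=(x-t_k)g$ with $g(t_k)\ne0$ (by G1), the residue of $\psi_i$ at $t_k$ equals $(p/g^2)'(t_k)$, whose vanishing is exactly the displayed equation after taking logarithmic derivatives. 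Hence \eqref{eq:bae} in color $i$ is equivalent to $\Res_{t_k}\psi_i=0$ at every root $t_k$ of $y_i$.

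The next step is to identify this with \eqref{3}. Solving $W(y_i,w)=p$ as a first-order linear equation for $w$ gives $(w/y_i)'=-\psi_i$, so $w=-y_i\int\psi_i\,dx+Cy_i$. The vanishing of the residues of $\psi_i$ at the roots of $y_i$ is precisely the condition that $\int\psi_i$ has no logarithm there, so $w$ is a single-valued function, regular at each root of $y_i$ and, by G2--G3 and the dominance of $\La_2,\dots,\La_n$, regular at each $z_s$ with $s\ge2$. At $z_1=0$ the local exponent of $\psi_i$ is $\langle\La_1,\alpha_i^\vee\rangle$, and integration raises it by one to $\langle\La_1+\rho,\alpha_i^\vee\rangle$; a degree count at infinity then exhibits $w=x^{\langle\La_1+\rho,\alpha_i^\vee\rangle}\tilde y_i$ with $\tilde y_i$ a polynomial of degree $l_i+\langle\mu,\alpha_i^\vee\rangle$, where $\mu=\La-\La_1-\alpha(\bm l)$. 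Conversely, a polynomial solution $\tilde y_i$ of \eqref{3} gives $\psi_i=-(x^{\langle\La_1+\rho,\alpha_i^\vee\rangle}\tilde y_i/y_i)'$, an exact form whose residue at each root of $y_i$ vanishes, returning \eqref{eq:bae}. Running this over all $i$ proves the equivalence. The \emph{main obstacle} is exactly this local analysis at $z_1=0$: one must exclude a logarithmic term arising from a residue of $\psi_i$ at the origin and control the growth at infinity so that $\tilde y_i$ terminates as a polynomial of the stated degree; this is where the normalization $z_1=0$ and, as noted in the introduction, the genericity of the distinguished weight $\La_1$ are used.

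For the reproduction statement I would verify that $\bm{\tilde y_i}$ satisfies \eqref{eq:bae} for the new data $(s_i\cdot\La_1,\La_2,\dots,\La_n)$ in every color. In color $i$, the scaling identity $W(\phi u,\phi v)=\phi^2W(u,v)$ with $\phi=x^{-\langle\La_1+\rho,\alpha_i^\vee\rangle}$ and antisymmetry of $W$ turn \eqref{3} into
\[
W\big(\tilde y_i,\,x^{\langle s_i\cdot\La_1+\rho,\alpha_i^\vee\rangle}y_i\big)=-\,T_i^{\mathrm{new}}\prod_{j\ne i}y_j^{-a_{i,j}},
\]
using $\langle s_i\cdot\La_1+\rho,\alpha_i^\vee\rangle=-\langle\La_1+\rho,\alpha_i^\vee\rangle$ and $x^{-2\langle\La_1+\rho,\alpha_i^\vee\rangle}T_i=T_i^{\mathrm{new}}$ (only $\La_1$ changes, at $z_1=0$); this is \eqref{3} for $\bm{\tilde y_i}$ in color $i$ up to sign, which by the equivalence yields \eqref{eq:bae} there. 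For a color $j\ne i$ only $T_j$ and the factor $y_i^{-a_{j,i}}$ change, to $T_j^{\mathrm{new}}=x^{-a_{j,i}\langle\La_1+\rho,\alpha_i^\vee\rangle}T_j$ and $\tilde y_i^{-a_{j,i}}$; subtracting the old color-$j$ equation (which holds) shows the new one is equivalent, at each root $t_k$ of $y_j$, to
\[
\frac{\big(x^{\langle\La_1+\rho,\alpha_i^\vee\rangle}\tilde y_i\big)'(t_k)}{\big(x^{\langle\La_1+\rho,\alpha_i^\vee\rangle}\tilde y_i\big)(t_k)}=\frac{y_i'(t_k)}{y_i(t_k)}.
\]
This holds because $W(y_i,x^{\langle\La_1+\rho,\alpha_i^\vee\rangle}\tilde y_i)=p$ vanishes at $t_k$ to order $-a_{i,j}\ge1$ when $a_{i,j}<0$ (and there is nothing to check when $a_{i,j}=0$), the genericity of $\bm{\tilde y_i}$ ensuring $y_i(t_k)\ne0\ne\tilde y_i(t_k)$ and $t_k\ne0$. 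Finally, $\deg\tilde y_i=l_i+\langle\mu,\alpha_i^\vee\rangle$ is equivalent to $\alpha(\bm{l_i})=\alpha(\bm l)+\langle\mu,\alpha_i^\vee\rangle\alpha_i$, i.e.\ to $\La-\La_1-\alpha(\bm{l_i})=s_i\mu=s_i(\La-\La_1-\alpha(\bm l))$, giving the asserted weight $\bm{l_i}$.
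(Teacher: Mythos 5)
Your reduction of \eqref{eq:bae}, color by color, to the vanishing of the residues of $\psi_i=T_i\prod_{j\ne i}y_j^{-a_{i,j}}/y_i^2$ at the roots of $y_i$, the identification $(w/y_i)'=-\psi_i$ for solutions of $W(y_i,w)=T_i\prod_{j\ne i}y_j^{-a_{i,j}}$ (correctly matched to the paper's convention $W(f,g)=f'g-fg'$), and your color-by-color verification of the reproduction step, including the weight bookkeeping $\La-\La_1-\alpha(\bm{l_i})=s_i(\La-\La_1-\alpha(\bs l))$, are exactly the argument of \cite{MV4}; the paper itself gives no proof and refers the reader there.

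The genuine gap is the point you flagged but did not close: the ``only if'' direction at $x=0$ and at infinity. Killing the residues of $\psi_i$ at the roots of $y_i$ removes the logarithms there, but when $\langle\La_1,\alpha_i^\vee\rangle$ is a negative integer the rational function $\psi_i$ may have a nonzero residue at the origin, and then no solution of the prescribed form $x^{\langle\La_1+\rho,\alpha_i^\vee\rangle}\tilde y_i$ with $\tilde y_i$ polynomial exists; similarly your ``degree count at infinity'' tacitly assumes no resonance between $\deg y_i$ and the exponent. This is not a removable technicality. For $\g=\mathfrak{sl}_2$, $n=2$, $\bm z=(0,1)$, $\langle\La_1,\alpha^\vee\rangle=-2$, $\langle\La_2,\alpha^\vee\rangle=3$, $l=1$, the point $t=-2$ is a critical point and $y=x+2$ is generic, yet $\psi=x^{-2}(x-1)^3/(x+2)^2$ has residue $1$ at the origin (its residue at $-2$ vanishes by the Bethe equation and its residue at infinity is $-1$), and indeed $W\bigl(x+2,\,x^{-1}\tilde y\bigr)=x^{-2}(x-1)^3$ has no polynomial solution $\tilde y$, as a leading-coefficient count shows. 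So the existence half of the statement genuinely requires either the genericity of $\La_1$ assumed in \cite{MV4} (which you invoke, but which is absent from the statement as quoted here), or a supplementary condition such as $\langle\La_1,\alpha_i^\vee\rangle\gge 0$ in the direction being used --- precisely the hypothesis $\nu_k\gge 0$ that the paper verifies before every application in Lemmas \ref{lem3.4}--\ref{lem3.7}, and under which $\psi_i$ is regular at the origin, the primitive vanishing there to order $\langle\La_1+\rho,\alpha_i^\vee\rangle$ can be chosen, and your argument closes. Apart from this unresolved local analysis, your proposal reproduces the cited proof faithfully.
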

We say that the $r$-tuple $\bm {\tilde y_i}$ (and the critical point it represents)
is obtained from the $r$-tuple $\bm  y$ (and the critical point it represents) by {\it the reproduction procedure in the $i$-th direction}.

Note that reproduction procedure can be iterated. The reproduction procedure in the $i$-th direction applied to $r$-tuple $\tl{\bm y}_i$ returns back the $r$-tuple $\bs y$. More generally, it is shown in \cite{MV4}, that the $r$-tuples obtained from $\bs y$ by iterating a reproduction procedure are in a bijective correspondence with the elements of the Weyl group.

We call a function $f(x)$ a \emph{quasi-polynomial} if it has the form $x^a p(x)$, where $a\in\C$ and $p(x)\in\C[x]$.
Under the assumptions of Theorem \ref{thm:2.5}, all $T_i$ are quasi-polynomials.

\section{Solutions of Bethe ansatz equation in the case of  \texorpdfstring{$V_{\la}\otimes V_{\omega_1}$}{Lg} for type \texorpdfstring{$\mathrm{B}_r$}{Lg}}\label{sec n=2, B}
In Sections \ref{sec n=2, B}, \ref{sec B generic} we work with Lie algebra of type $\mathrm{B}_r$.

Let $\g=\mathfrak{so}(2r+1)$. We have $(\alpha_i,\alpha_i)=4$, $i=1,\dots,r-1$, and $(\alpha_r,\alpha_r)=2$.

In this section we work with data $\bs \La=(\la,\omega_1)$, $\bs z=(0,1)$. The main result of the section
is the explicit formulas for the solutions of the Bethe ansatz equations, see Corollary \ref{thm sol B}.

\subsection{Parameterization of solutions}\label{sec2.6}
One of our goals is to diagonalize the Gaudin Hamiltonians associated to $\bs \La=(\la,\omega_1)$, $\bs z=(0,1)$.
It is sufficient to do that in the spaces of singular vectors of a given weight.

Let $\la\in\mathcal P^+$. We write the decomposition of finite-dimensional $\g$-module $V_\la\otimes V_{\omega_1}$. We have
\begin{align}\label{eq:dec of B}
V_{\la}\otimes V_{\omega_1}=&V_{\la+\omega_1}\oplus V_{\la+\omega_1-\alpha_1}\oplus \dots \oplus V_{\la+\omega_1-\alpha_1-\dots-\alpha_r}
\oplus V_{\la+\omega_1-\alpha_1-\dots-\alpha_{r-1}-2\alpha_r}\nonumber\\&\oplus V_{\la+\omega_1-\alpha_1-\dots-\alpha_{r-2}-2\alpha_{r-1}-2\alpha_r}\oplus\dots\oplus V_{\la+\omega_1-2\alpha_1-\dots-2\alpha_{r-1}-2\alpha_r}\nonumber\\
=&V_{(\la_1+1,\la_2,\dots,\la_r)}\oplus V_{(\la_1-1,\la_2+1,\la_3,\dots,\la_r)}\oplus
V_{(\la_1,\dots,\la_{k-1},\la_{k}-1,\la_{k+1}+1,\dots,\la_r)}\nonumber\\ &\oplus\dots\oplus V_{(\la_1,\la_2,\dots,\la_{r-2},\la_{r-1}-1,\la_r+2)}
\oplus V_{(\la_1,\la_2,\dots,\la_{r-1},\la_r)}\oplus V_{(\la_1,\la_2,\dots,\la_{r-2},\la_{r-1}+1,\la_r-2)} \nonumber\\ &\oplus V_{(\la_1,\dots,\la_{r-2}+1,\la_{r-1}-1,\la_r)}\oplus\dots\oplus V_{(\la_1-1,\la_2,\dots,\la_r)},
\end{align}
with the convention that the summands with non-dominant highest weights are omitted. In addition, if $\la_r=0$, then the summand $V_{\la-\alpha_1-\dots-\alpha_r}=V_{(\la_1,\la_2,\dots,\la_{r-1},\la_r)}$ is absent.

Note, in particular, that all multiplicities are 1.

By Theorem \ref{thm:bveigen}, to diagonalize the Gaudin Hamiltonians, it is sufficient to find a solution of the Bethe ansatz equation (\ref{eq:bae}) associated to $\bs\La,\bs z$ and $\bs l$ corresponding to the summands in the decomposition (\ref{eq:dec of B}).
We call an $r$-tuple of integers $\bs l$ {\it admissible} if $V_{\la+\omega_1-\alpha(\bs l)}\subset V_\la\otimes V_{\omega_1}$.

The admissible $r$-tuples $\bs l$ have the form
\begin{align}\label{admit}
\bs l=(\underbrace{1,\dots,1}_{k \text{ ones}},0,\dots,0) \qquad {\rm or} \qquad \bs l=(\underbrace{1,\dots,1}_{k \text{ ones}},2,\dots,2),
\end{align}
where $k=0,\dots,r$. In the first case the length $l=l_1+\dots +l_r$ is $k$ and in the second case $2r-k$. It follows that
different admissible $r$-tuples have different length and, therefore, admissible tuples $\bs l$ are parameterized by length $l\in \{0,1,\dots,2r\}$. We call a nonnegative integer $l$ {\it admissible} if it is the length of an admissible  $r$-tuple $\bs l$.
More precisely, a nonnegative integer $l$ is  admissible if $l=0$ or if $l\lle r$, $\la_l>0$ or if $l=r+1$, $\la_r>1$ or if $r+1<l\lle 2r$, $\la_{2r-l+1}>0$.

In terms of $\bm y=(y_1,\dots,y_r)$, we have the following cases, corresponding to \eqref{admit}.

For $l\lle r$, the polynomials $y_1,\dots,y_{l}$ are linear and $y_{l+1},\dots,y_r$ are all equal  to one.

For $l>r$, the polynomials $y_1,\dots,y_{2r-l}$ are linear and $y_{2r-l+1},\dots,y_r$ are quadratic.

\begin{rem}\label{rem type A}
For $l\lle r$ the Bethe ansatz equations for type $\mathrm{B}_r$ coincide with the Bethe ansatz equations for type $\mathrm{A}_r$ which were solved directly in \cite{MV3}.
In what follows, we recover the result for $l<r$, and we refer to \cite{MV3} for the case of $l=r$.
\end{rem}

\subsection{Example of \texorpdfstring{$\mathrm{B}_2$}{Lg}}\label{sec3.3} We illustrate our approach in the case of $\mathrm{B}_2$, $l=4$.
We have $n=2$, $\La_1=\la\in\mathcal P^+$, $\La_2=\omega_1$, $z_1=0,z_2=1$. We write $\la=(\la_1,\la_2)$, where $\la_i=\langle\la,\alpha_i^\vee\rangle\in\Z_{\gge 0}$.

Suppose the Bethe ansatz equation has a solution with $l=4$. Then it is represented by quadratic polynomials $y_1^{(4)}$ and $y_2^{(4)}$. By Theorem \ref{thm:2.5}, it means that there exist polynomials $\tilde y_1,\tilde y_2$ such that
\begin{align*}
W(y_1^{(4)},\tilde y_1)=x^{\la_1}(x-1)y_2^{(4)},\qquad W(y_2^{(4)},\tilde y_2)=x^{\la_2}\big(y_1^{(4)}\big)^2.
\end{align*}

Note we have $\la_1,\la_2\in\Z_{\gge 0}$, but  for $\la_1=0$ the first equation is impossible for degree reasons.
Therefore, there are no solutions with $l=4$ for $\la_1=0$ which is exactly when the corresponding summand is absent
in \eqref{eq:dec of B} and when $l=4$ is not admissible.

\textbf{Step 1}: There exists a unique monic linear polynomial $u_1$ such that $-\la_1\tilde y_1=x^{\la_1+1}u_1$.
Clearly, the only root of $u_1$ cannot coincide with the roots of $x^{\la_1}(x-1)y_2^{(4)}$, therefore the pair $(u_1,y_2^{(4)})$ is generic.
It follows from Theorem \ref{thm:2.5}, that the pair $(u_1,y_2^{(4)})$ solves Bethe ansatz equation with $l=3$ and $\la$ replaced by $s_1\cdot\la=(-\la_1-2,2\la_1+\la_2+2)$.

In terms of Wronskians, it means that there exist quasi-polynomials $\hat y_1$ and $\hat y_2$ such that
\begin{align*}
W(u_1,\hat y_1)=x^{-\la_1-2}(x-1)y_2^{(4)},\qquad W(y_2^{(4)},\hat y_2)=x^{2\la_1+\la_2+2}u_1^2.
\end{align*}

The procedure we just described corresponds to the reproduction in the first direction, we have
$s_1(\omega_1-2\alpha_1-2\alpha_2)=\omega_1-\alpha_1-2\alpha_2$.

Note that $s_2(\omega_1-2\alpha_1-2\alpha_2)=\omega_1-2\alpha_1-2\alpha_2$ and the reproduction in the second direction applied to $(y_1^{(4)},y_2^{(4)})$  does not change $l=4$. We do not use it.

\textbf{Step 2}: We apply the reproduction in the second direction to the $l=3$ solution $(u_1,y_2^{(4)})$.

By degree reasons, we have $-(\la_2+2\la_1+1)\hat y_2=x^{\la_2+2\la_1+3}\cdot 1$. Set $u_2=1$. Clearly, the pair $(u_1,u_2)$ is generic.
By Theorem \ref{thm:2.5}, the pair $(u_1,u_2)$ solves Bethe ansatz equation with $l=1$ and  $\La_1=(s_2s_1)\cdot\la=(\la_1+\la_2+1,-2\la_1-\la_2-4)$.

It means, we have
$s_2(\omega_1-\alpha_1-2\alpha_2)=\omega_1-\alpha_1$ and there exist quasi-polynomials $\bar y_1,\bar y_2$ such that \
\[ W(u_1,\bar y_1)=x^{\la_1+\la_2+1}(x-1)u_2=x^{\la_1+\la_2+1}(x-1),\qquad W(u_2,\bar y_2)= x^{-2\la_1-\la_2-4}u_1^2.\]

Note that we also have $\la_1\hat y_1=x^{-\la_1-1}y_1^{(4)}$. Therefore, we can recover the initial solution $(y_1^{(4)}, y_2^{(4)})$ from $(u_1,y_2^{(4)})$. In general, if we start with an arbitrary $l=3$ solution and use the reproduction in the first direction, we obtain a pair of quadratic polynomials. If this pair is generic, then it represents an $l=4$ solution associated to the data $\bm\La=(\la,\omega_1),\bm z,\bm l=(2,2)$. However, we have no easy argument to show that it is generic. Thus, our procedure gives an inclusion of all $l=4$ solutions to the $l=3$ solutions and we need an extra argument to show this inclusion is a bijection.

\textbf{Step 3}: Finally, we apply the reproduction in the first direction to the $l=1$ solution $(u_1,u_2)$.

We have $-(\la_1+\la_2+1)\bar y_1=x^{\la_1+\la_2+2}\cdot 1$. Set $v_1=1$.
Clearly, the pair $(v_1,u_2)=(1,1)$ is generic and represents the only solution of the Bethe ansatz equation
with $l=0$ and   $\La_1=(s_1s_2s_1)\cdot\la=(-\la_1-\la_2-3,\la_2)$.  We denote the final weight $(s_1s_2s_1)\cdot\la$ by $\theta=(\theta_1,\theta_2)$.

It means, we have
$s_1(\omega_1-\alpha_1)=\omega_1$, and there exist quasi-polynomials $\overset{\circ}{y}_1,\overset{\circ}{y}_2$ such that
\[W(v_1,\overset{\circ}{y}_1)= x^{-\la_1-\la_2-3}(x-1)u_2^2,\qquad W(u_2,\overset{\circ}{y_2})=x^{\la_2}v_1=x^{\la_2}.\]

As before, we have $(\la_1+\la_2+1)\overset{\circ}{y}_1=x^{-\la_1-\la_2-2}u_1$, and therefore using reproduction in the first direction to pair $(v_1, u_2)$ we recover the pair $(u_1,u_2)$.

\medskip

To sum up, we have the inclusions of solutions for $l=4$ to $l=3$ to $l=1$ to $l=0$ with the $\La_1$ varying by the shifted action of the Weyl group. Since for $l=0$ the solution is unique, it follows that for $l=1,3,4$ the solutions are at most unique. Moreover, if it exists, it can be computed recursively.

\medskip

We proceed with the direct computation of $y_1^{(4)},y_2^{(4)}$. From step 3, we have $v_1=u_2=1$. Then we compute
\[u_1=x-\frac{\la_1+\la_2+1}{\la_1+\la_2+2}.\]

From step 2, we get
\[
y_2^{(4)}=x^2-\frac{2(2\la_1+\la_2+1)(\la_1+\la_2+1)}{(2\la_1+\la_2+2)(\la_1+\la_2+2)}x+\frac{(2\la_1+\la_2+1)(\la_1+\la_2+1)^2}{(2\la_1+\la_2+3)(\la_1+\la_2+2)^2}.
\]Finally, from Step 1,
\begin{align*}
y_1^{(4)}=&x^2-\frac{(2\la_1+\la_2+1)(2\la_1^2+2\la_1\la_2+4\la_1+\la_2+2)}{(\la_1+1)(\la_1+\la_2+2)(2\la_1+\la_2+2)}x\\&+\frac{\la_1(\la_1+\la_2+1)(2\la_1+\la_2+1)}{(\la_1+1)(\la_1+\la_2+2)(2\la_1+\la_2+3)}.
\end{align*}
From the formula it is easy to check that the pair $(y_1^{(4)},y_2^{(4)})$ is generic if $\la_1>0$ and therefore represents a solution of the Bethe ansatz equation associated to $\bm \La,\bm z$ and $l=4$.

Thus the Bethe ansatz equation associated to $\bm \La,\bm z,\bm l=(2,2)$ has a unique solution given by the formulas above.

\subsection{The recursion lemmas}\label{sec lemmas}

Let $l\in\{0,\dots,r-1,r+2,\dots,2r\}$, we establish a reproduction procedure which produces solutions of length $l-1$ from the ones of length $l$. For $l=r+1$, the reproduction procedure goes from $l=r+1$ to $l=r-1$. We recover the special case $l=r$ directly from \cite{MV3}, see Remark \ref{rem type A}. By Theorem \ref{thm:2.5} it is sufficient to check that the new $r$-tuple of polynomial is generic with respect to new data. It is done with the help of following series of lemmas.

For brevity, we denote $x-1$ by $y_0$ for this section.

The first lemma describes the reproduction in the $k$-th direction from $l=2r-k+1$ to $l=2r-k$, where $k=1,\dots,r-1$.
\begin{lem}\label{lem3.4}
Let $k\in\{1,\dots, r-1\}$. Let $\nu=(\nu_1,\dots,\nu_r)$ be an integral weight such that $\nu_{k}\gge 0$. Let $y_1,\dots,y_{k-1}$ be linear polynomials and $y_{k},\dots,y_{r}$ be quadratic polynomials. Suppose the $r$-tuple of polynomials $\bm y^{(2r-k+1)}=(y_1,\dots,y_r)$ represents a critical point associated to $(\nu,\omega_1),\bm z$ and $l=2r-k+1$. Then there exists a unique monic linear polynomial $u_{k}$ such that $W(y_k,x^{\nu_k+1}u_k)=-\nu_k x^{\nu_k}y_{k-1}y_{k+1}$. Moreover, $\nu_{k}> 0$ and the $r$-tuple of polynomials $\bm y^{(2r-k)}=(y_1,\dots,y_{k-1},u_k,y_{k+1},\dots,y_r)$ represents a critical point associated to $(s_k\cdot\nu,\omega_1),\bm z$ and $l=2r-k$.
\end{lem}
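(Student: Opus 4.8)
The plan is to identify $\bm y^{(2r-k)}$ with the reproduction of $\bm y^{(2r-k+1)}$ in the $k$-th direction furnished by Theorem \ref{thm:2.5}, so that everything reduces to checking that the resulting $r$-tuple is generic.

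First I would write the reproduction equation explicitly. For $\bm\La=(\nu,\omega_1)$ and $\bm z=(0,1)$ we have $T_k(x)=x^{\nu_k}(x-1)^{\delta_{k,1}}$, and the $k$-th row of the $\mathrm B_r$ Cartan matrix gives $\prod_{j\neq k}y_j^{-\langle\alpha_j,\alpha_k^\vee\rangle}=y_{k-1}y_{k+1}$, where $y_0=x-1$. Since $\bm y^{(2r-k+1)}$ represents a critical point, Theorem \ref{thm:2.5} supplies a polynomial $\tilde y_k$ with $W(y_k,x^{\nu_k+1}\tilde y_k)=x^{\nu_k}y_{k-1}y_{k+1}$. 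Using the identity $W(y_k,x^{\nu_k+1}g)=x^{\nu_k}\big(xW(y_k,g)-(\nu_k+1)y_kg\big)$, the right-hand side has degree $\nu_k+3$, which forces $\deg\tilde y_k=1$; moreover the top coefficient of the bracket equals $(1-\deg\tilde y_k-\nu_k)$ times the leading coefficient of $\tilde y_k$, so this coefficient vanishes exactly when $\nu_k=0$, in which case no $\tilde y_k$ matches the degree-three right-hand side. Hence $\nu_k>0$. Setting $u_k:=-\nu_k\tilde y_k$ then yields a monic linear polynomial with $W(y_k,x^{\nu_k+1}u_k)=-\nu_k x^{\nu_k}y_{k-1}y_{k+1}$; uniqueness follows because for two solutions the functions $x^{\nu_k+1}u_k$ differ by a multiple $c\,y_k$, and since $y_k(0)\neq0$ by genericity while $x^{\nu_k+1}u_k$ is divisible by $x^{\nu_k+1}$, we must have $c=0$.

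It then remains to verify that $\bm y^{(2r-k)}=(y_1,\dots,y_{k-1},u_k,y_{k+1},\dots,y_r)$ is generic with respect to $(s_k\cdot\nu,\omega_1),\bm z$; granting this, Theorem \ref{thm:2.5} identifies it as a critical point for those data, and its length is $2r-k$ by a degree count. Condition \textbf{G1} and every instance of \textbf{G2}, \textbf{G3} not involving $u_k$ are inherited unchanged from the genericity of $\bm y^{(2r-k+1)}$, since replacing $y_k$ by the linear $u_k$ alters neither the remaining polynomials nor the loci $\{0,1\}$ where the $T_i$ have roots or singularities. The real work concerns the unique root $a$ of $u_k$. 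To see $a\neq0$ I would set $u_k=x$ and evaluate the Wronskian identity at $x=0$: the left-hand side vanishes while the right-hand side equals $-\nu_k y_{k-1}(0)y_{k+1}(0)\neq0$ by \textbf{G2} and $\nu_k>0$. To see $a$ is not a root of $y_{k+1}$ (or $y_{k-1}$), I would evaluate the identity at such a common root $a$: the left-hand side collapses to $-y_k(a)\,a^{\nu_k+1}$ and the right-hand side vanishes, so $a\neq0$ forces $y_k(a)=0$, contradicting \textbf{G3} for $\bm y^{(2r-k+1)}$. The boundary case $k=1$ additionally requires $a\neq1$ (a root of $T_1$), which follows by evaluating at $x=1$ and deriving $y_1(1)=0$, against \textbf{G2}.

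The main obstacle is this last step: one must confirm that the normalization constant $-\nu_k$ and the strict inequality $\nu_k>0$ both come out of the one leading-coefficient computation, and that the partition into inherited versus newly verified genericity conditions is carried out uniformly across the delicate boundary indices $k=1$ (where the extra factor $y_0=x-1$ enters from $\omega_1$) and $k=r-1$ (adjacent to the short root). Each nontrivial verification ultimately reduces to a one-point evaluation of the Wronskian identity played against the old genericity of $\bm y^{(2r-k+1)}$.
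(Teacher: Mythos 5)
Your proposal is correct and follows essentially the same route as the paper's proof: reduce everything to Theorem \ref{thm:2.5}, force $\deg\tilde y_k=1$ and $\nu_k>0$ by a degree/leading-coefficient count, normalize to the monic $u_k$, and establish genericity of the new tuple by one-point evaluations of the Wronskian identity (the paper is in fact terser: it settles \textbf{G3} by the single observation that a common root $a$ of $u_k$ and $y_{k-1}y_{k+1}$ forces $y_k(a)=0$ --- your separate case $a=1$ for $k=1$ being subsumed there by the convention $y_0=x-1$ --- and \textbf{G1} by linearity of $u_k$). The only soft spot, your appeal to \textbf{G2} to conclude $y_{k\pm1}(0)\neq 0$ (which \textbf{G2} alone does not give when $\nu_{k\pm1}=0$, though it can be repaired by evaluating the Wronskian identities in the neighboring directions at $x=0$ and invoking \textbf{G3}), together with the claim that the \textbf{G2} conditions for $i\neq k$ are ``inherited unchanged'' even though the weight changes to $s_k\cdot\nu$ and hence so do the exponents of $T_{k\pm1}$, is a level of terseness that the paper's own proof shares, so it does not count as a departure from, or a defect relative to, the paper's argument.
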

\begin{proof}
The existence of polynomial $\tl y_k$ such that $W(y_k,\tl y_k)=x^{\nu_k}y_{k-1}y_{k+1}$ implies $\nu_k>0$. Indeed, if $\deg\tl y_k\gge 3$, then $\deg W(y_k,\tl y_k)\gge 4$; if $\deg\tl y_k\lle 2$, then $\deg W(y_k,\tl y_k)\lle 2$. Hence $\deg x^{\nu_k}y_{k-1}y_{k+1}\ne 3$, it follows that $\nu_k\ne 0$.

By Theorem \ref{thm:2.5}, it is enough to show $\bm y^{(2r-k)}$ is generic. If $y_{k-1}y_{k+1}$ is divisible by $u_k$, then $y_k$ has common root with $y_{k-1}y_{k+1}$ which is impossible since $(y_1,\dots,y_r)$ is generic. Since $u_k$ is linear, it cannot have a multiple root.
\end{proof}
Note that we do not have such a lemma for the reproduction in the $k$-th direction which goes from $l-1$ to $l$ since unlike $u_k$ the new polynomial is quadratic and we cannot immediately conclude that it has distinct roots. We overcome this problem using the explicit formulas in Section \ref{sec check generic}.

The next lemma describes the reproduction in the $r$-th direction from $l=r+1$ to $l=r-1$.
\begin{lem}\label{lem3.12}
Let $\nu=(\nu_1,\dots,\nu_r)$ be an integral weight such that $\nu_{r}\gge 0$. Let $y_1,\dots,y_{r-1}$ be linear polynomials and $y_r$ be a quadratic polynomial. Suppose the $r$-tuple of polynomials $\bm y^{(r+1)}=(y_1,\dots,y_r)$ represents a critical point associated to $(\nu,\omega_1),\bm z$ and $l=r+1$. Then $W(y_{r},x^{\nu_{r}+1})=-(\nu_{r}-1)x^{\nu_{r}}y_{r-1}^2$. Moreover, $\nu_{r}\gge 2$ and the $r$-tuple of polynomials $\bm y^{(r-1)}=(y_1,\dots,y_{r-2},y_{r-1},1)$ represents a critical point associated to $(s_{r}\cdot\nu,\omega_1),\bm z$ and $l=r-1$.\hfill$\square$
\end{lem}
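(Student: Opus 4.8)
The plan is to run the reproduction procedure of Theorem \ref{thm:2.5} in the $r$-th direction, pin down the reproduced polynomial by a degree count, and then verify genericity of the result, following the pattern of the proof of Lemma \ref{lem3.4}. Since $\bm y^{(r+1)}$ represents a critical point, Theorem \ref{thm:2.5} supplies a polynomial $\tilde y_r$ with
\[W\bigl(y_r,x^{\nu_r+1}\tilde y_r\bigr)=T_r\prod_{j\ne r}y_j^{-\langle\alpha_j,\alpha_r^\vee\rangle}.\]
First I would simplify the right-hand side for the data $\bm\La=(\nu,\omega_1)$, $\bm z=(0,1)$: here $T_r=x^{\nu_r}$ since $\langle\omega_1,\alpha_r^\vee\rangle=0$ for $r\gge2$, and the only nonzero exponent $-\langle\alpha_j,\alpha_r^\vee\rangle$ with $j\ne r$ occurs at $j=r-1$, where it equals $2$. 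Thus the reproduction equation reads $W(y_r,x^{\nu_r+1}\tilde y_r)=x^{\nu_r}y_{r-1}^2$, with right-hand side of degree $\nu_r+2$.

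Next I would constrain $\nu_r$. Writing $g=x^{\nu_r+1}\tilde y_r$ and using $\deg y_r=2$, one has $\deg W(y_r,g)=\deg g+1\gge4$ when $\deg g\gge3$, whereas $\deg W(y_r,g)\lle2$ when $\deg g\lle2$ (the top terms cancel when $\deg g=2$); hence $\deg W\ne3$, forcing $\nu_r+2\ne3$, i.e.\ $\nu_r\ne1$. This is the analogue of the degree argument in Lemma \ref{lem3.4}, but here it does \emph{not} yet exclude $\nu_r=0$, and excluding that value is the main obstacle. For $\nu_r=0$ one only gets $\deg g\lle2$, so $\tilde y_r$ has degree $0$ or $1$; I would dispose of both cases by a direct low-degree computation with $y_r=x^2+px+q$ and $y_{r-1}=x-a$. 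In each case, matching the coefficients of $W(y_r,g)=(x-a)^2$ forces $y_r(a)=0$, so $y_{r-1}$ and $y_r$ share the root $a$, contradicting \textbf{G3} for $\bm y^{(r+1)}$. This rules out $\nu_r=0$ and establishes $\nu_r\gge2$.

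With $\nu_r\gge2$ the degree count closes: necessarily $\deg g=\nu_r+1\gge3$, so $\tilde y_r$ is a constant $c_0$ and $g=c_0x^{\nu_r+1}$. A direct expansion gives
\[W\bigl(y_r,x^{\nu_r+1}\bigr)=x^{\nu_r}\bigl((1-\nu_r)x^2-\nu_r p\,x-(\nu_r+1)q\bigr),\]
with leading coefficient $1-\nu_r$. Comparing leading coefficients in $c_0\,W(y_r,x^{\nu_r+1})=x^{\nu_r}y_{r-1}^2$ gives $c_0(1-\nu_r)=1$, whence $W(y_r,x^{\nu_r+1})=-(\nu_r-1)x^{\nu_r}y_{r-1}^2$, the asserted identity.

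Finally I would check that $\bm y^{(r-1)}=(y_1,\dots,y_{r-1},1)$ is generic with respect to the shifted data $(s_r\cdot\nu,\omega_1),\bm z$ and apply the second half of Theorem \ref{thm:2.5}. Conditions \textbf{G1} and \textbf{G3} are immediate, since the new entry $1$ has no roots and every other requirement is inherited from $\bm y^{(r+1)}$. For \textbf{G2} the only new feature is that passing to $s_r\cdot\nu$ raises the exponent of $T_{r-1}$ at $0$ to $\nu_{r-1}+\nu_r+1>0$, so I must exclude a root of $y_{r-1}$ at $0$; but substituting $y_{r-1}=x$ into the identity just proved forces $p=q=0$, i.e.\ $y_r=x^2$, contradicting \textbf{G1} for $\bm y^{(r+1)}$. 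The remaining \textbf{G2} conditions, including the point $x=1$ when $r=2$, are inherited. Theorem \ref{thm:2.5} then yields a critical point for $(s_r\cdot\nu,\omega_1),\bm z$, and the computation $s_r(\omega_1-\alpha_1-\dots-\alpha_{r-1}-2\alpha_r)=\omega_1-\alpha_1-\dots-\alpha_{r-1}$ shows $\alpha(\bm{l_r})=\alpha_1+\dots+\alpha_{r-1}$, so the new length is $r-1$, as claimed.
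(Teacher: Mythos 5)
Your proof is correct and follows the route the paper intends: the paper leaves Lemma \ref{lem3.12} unproved (it is the analogue of Lemma \ref{lem3.4}, via the Wronskian criterion of Theorem \ref{thm:2.5}, a degree count pinning down $\tilde y_r$, and a genericity check before reapplying Theorem \ref{thm:2.5}), and your argument matches that template, including the correct sign bookkeeping for $W(f,g)=f'g-fg'$ and the computation $s_r(\omega_1-\alpha_1-\dots-\alpha_{r-1}-2\alpha_r)=\omega_1-\alpha_1-\dots-\alpha_{r-1}$ giving the new length $r-1$. The one place where the analogy with Lemma \ref{lem3.4} genuinely breaks down --- here the right-hand side $x^{\nu_r}y_{r-1}^2$ has degree $\nu_r+2$, so the degree argument only excludes $\nu_r=1$ rather than $\nu_r=0$ --- you correctly identify and close by the explicit computation in the two low-degree cases, where matching coefficients forces $y_r(a)=0$ and hence a common root of $y_{r-1}$ and $y_r$ contradicting \textbf{G1}/\textbf{G3}; this, together with your check that $y_{r-1}(0)\neq 0$ under the shifted weight $s_r\cdot\nu$, is exactly the extra content this lemma requires.
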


Finally, we disuss the reproduction in the $k$-th direction from $l=k$ to $l=k-1$, where $k=1,\dots,r-1$.
\begin{lem}\label{lem3.7}
Let $k\in \{1,\dots, r-1\}$. Let $\nu=(\nu_1,\dots,\nu_r)$ be an integral weight such that $\nu_{k}\gge 0$. Let $y_1,\dots,y_k$ be linear polynomials and $y_{k+1}=\dots=y_r=1$. Suppose the $r$-tuple of polynomials $\bm y^{(k)}=(y_1,\dots,y_r)$ represents a critical point associated to $(\nu,\omega_1),\bm z$ and $l=k$. Then  $W(y_{k},x^{\nu_{k}+1})=-\nu_{k}x^{\nu_{k}}y_{k-1}y_{k+1}$. Moreover, $\nu_{k}> 0$ and the $r$-tuple of polynomials $\bm y^{(k-1)}=(y_1,\dots,y_{k-1},1,1,\dots,1)$ represents a critical point associated to $(s_{k}\cdot\nu,\omega_1),\bm z$ and $l=k-1$.\hfill$\square$
\end{lem}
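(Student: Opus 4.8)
The plan is to derive all three assertions from the reproduction characterization of Theorem \ref{thm:2.5}, proceeding exactly as in Lemma \ref{lem3.4} but in the regime where $y_k$ is linear. Since $\bm y^{(k)}$ represents a critical point associated to $(\nu,\omega_1),\bm z$ and $l=k$, Theorem \ref{thm:2.5} produces a polynomial $\tilde y_k$ with $W(y_k,\,x^{\nu_k+1}\tilde y_k)=T_k\prod_{j\ne k}y_j^{-\langle\alpha_j,\alpha_k^\vee\rangle}$. For $k\le r-1$ the right-hand side collapses: by \eqref{eq T} we have $T_k=x^{\nu_k}(x-1)^{\delta_{1,k}}$, the only nonzero exponents in the product are those of the neighbours $y_{k-1}$ and $y_{k+1}$, both equal to $1$, and with the convention $y_0=x-1$ (which absorbs the factor $(x-1)$ when $k=1$) and $y_{k+1}=1$ the equation becomes $W(y_k,\,x^{\nu_k+1}\tilde y_k)=x^{\nu_k}y_{k-1}y_{k+1}=x^{\nu_k}y_{k-1}$, a polynomial of degree $\nu_k+1$.

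First I would pin down $\tilde y_k$ and $\nu_k$ by a degree count. Writing $y_k=x-t$ and $g=x^{\nu_k+1}\tilde y_k$ of degree $M$, the relation $y_k'=1$ gives $W(y_k,g)=g-(x-t)g'$, whose leading term is proportional to $(1-M)x^{M}$; thus $\deg W(y_k,g)=M$ unless $M=1$, in which case $\deg W(y_k,g)\le 0$. Matching the degree $\nu_k+1\ge 1$ therefore forces $M=\nu_k+1$, so $\tilde y_k$ is a nonzero constant, and since $M\ne1$ it forces $\nu_k>0$. A direct computation now gives $W(y_k,x^{\nu_k+1})=x^{\nu_k}\bigl(-\nu_k x+(\nu_k+1)t\bigr)$, whose leading coefficient is $-\nu_k$; comparing with the constant multiple of $x^{\nu_k}y_{k-1}$ produced above identifies the constant and yields both the asserted identity $W(y_k,x^{\nu_k+1})=-\nu_k x^{\nu_k}y_{k-1}y_{k+1}$ and, reading off the coefficient of $x^{\nu_k}$, the root relation $(\nu_k+1)t=\nu_k s$, where $s$ is the root of $y_{k-1}$.

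It then remains to check that $\bm y^{(k-1)}=(y_1,\dots,y_{k-1},1,\dots,1)$ is generic with respect to the new data $(s_k\cdot\nu,\omega_1),\bm z$; granting this, Theorem \ref{thm:2.5} immediately gives that $\bm y^{(k-1)}$ represents a critical point, and its length is $k-1$ because the $k$-th polynomial has become the constant $1$. Conditions G1 and G3 are immediate: $y_1,\dots,y_{k-1}$ are linear and are inherited unchanged from the generic tuple $\bm y^{(k)}$, while the entry in position $k$ is now $1$ and carries no roots. The only delicate point, which is the crux of the argument, is condition G2 at the origin for $y_{k-1}$: passing from $\nu$ to $s_k\cdot\nu$ raises the exponent of $x$ in $T_{k-1}$ to $\nu_{k-1}+\nu_k+1$, so the origin may be a root (or singularity) of the new $T_{k-1}$ even when it was not one for $\nu$, and the genericity of $\bm y^{(k)}$ says nothing about it. Here I would invoke the root relation just obtained: since $t\ne0$ (by G2 for $\bm y^{(k)}$, as $\nu_k>0$ makes the origin a root of the old $T_k$) and $\nu_k>0$, the identity $(\nu_k+1)t=\nu_k s$ forces $s\ne0$, so the root of $y_{k-1}$ avoids the origin; for $k=2$ one also needs the root of $y_1$ to avoid $1$, which is inherited from the genericity of $\bm y^{(k)}$. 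The case $k=1$ is trivial, since $\bm y^{(0)}=(1,\dots,1)$ has no roots at all. This G2 verification at the origin, not implied by the old genericity and extracted instead from the explicit root relation, is the only real obstacle; the remaining steps are degree bookkeeping and a one-line Wronskian computation, reflecting that decreasing the length toward the trivial solution is the easy direction of the reproduction.
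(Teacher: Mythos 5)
Your proof is correct and takes essentially the approach the paper intends: the paper omits the proof of this lemma (it is stated with a closing $\square$, by analogy with Lemma \ref{lem3.4}), and the analogous argument there is exactly your combination of Theorem \ref{thm:2.5}, a degree count on the Wronskian forcing $\nu_k>0$ and $\deg\tilde y_k=0$, and a genericity check for the new tuple. Your explicit verification of \textbf{G2} at the origin for $y_{k-1}$ with respect to the new weight $s_k\cdot\nu$, extracted from the root relation $(\nu_k+1)t=\nu_k s$ together with $t\ne 0$, is a genuine detail that the paper's terse treatment leaves implicit, and it is handled correctly.
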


\subsection{At most one solution}\label{sec at most one} In this section, we show that there exists at most one solution of the Bethe ansatz equation (\ref{eq:bae}).

We start with the explicit formulas for the shifted action of the Weyl group.
\begin{lem}\label{lem3.10} Let $\la=(\la_1,\dots,\la_r)\in\h^*$.

We have
\[(s_1\dots s_k)\cdot \la=(-\la_1-\dots-\la_k-k-1,\la_1,\dots,\la_{k-1},\la_k+\la_{k+1}+1,\la_{k+2},\dots,\la_r),\]
where $k=1,\dots, r-2$,
\[(s_1\dots s_{r-1})\cdot \la=(-\la_1-\dots-\la_{r-1}-r,\la_1,\dots,\la_{r-2},2\la_{r-1}+\la_r+2),\]
\[(s_1\dots s_{r})\cdot \la=(-\la_1-\dots-\la_{r}-r-1,\la_1,\dots,\la_{r-2},2\la_{r-1}+\la_r+2),\]
and
\begin{align*}
(s_1\dots s_{r}s_{r-1}\dots s_{2r-k})\cdot \la=(&-\la_1-\dots-\la_{2r-k-1}-2\la_{2r-k}-\dots-2\la_{r-1}-\la_r-k-1,\\&\la_1,\dots,\la_{2r-k-2},\la_{2r-k-1}+\la_{2r-k}+1,\la_{2r-k+1},\dots,\la_r),
\end{align*}
where $k=r+1,\dots, 2r-1$.
\end{lem}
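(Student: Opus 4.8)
The plan is to reduce the whole statement to the effect of a single shifted reflection in the coordinates $\la_i=\langle\la,\alpha_i^{\vee}\rangle$, and then run an induction on the length of the word, removing one reflection at a time.

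First I would record the one-step formula. Since $\langle\rho,\alpha_i^{\vee}\rangle=1$ and $\langle\alpha_j,\alpha_i^{\vee}\rangle=a_{i,j}$, expanding $s_j\cdot\la=s_j(\la+\rho)-\rho=(\la+\rho)-\langle\la+\rho,\alpha_j^{\vee}\rangle\alpha_j-\rho$ and pairing with $\alpha_i^{\vee}$ gives
\[
\langle s_j\cdot\la,\alpha_i^{\vee}\rangle=\la_i-(\la_j+1)\,a_{i,j},\qquad i=1,\dots,r.
\]
I would then write down the type $\mathrm B_r$ Cartan matrix: $a_{i,i}=2$, $a_{i,i+1}=a_{i+1,i}=-1$ for $1\lle i\lle r-2$, together with the short-root entries $a_{r-1,r}=-1$, $a_{r,r-1}=-2$ (matching $(\alpha_i,\alpha_i)=4$ for $i<r$ and $(\alpha_r,\alpha_r)=2$). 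With these inputs each $s_j$ changes at most the three coordinates $j-1,j,j+1$, by explicit linear expressions in the old ones.

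The main step is an induction using that the shifted action is a left action, $(uv)\cdot\la=u\cdot(v\cdot\la)$. For the first formula I set $w_k=s_1\cdots s_k=w_{k-1}s_k$ and compute $w_k\cdot\la=w_{k-1}\cdot(s_k\cdot\la)$: for $k\lle r-2$ the reflection $s_k$ touches only coordinates $k-1,k,k+1$ with the standard entries $-1,2,-1$, and substituting $s_k\cdot\la$ into the induction hypothesis for $w_{k-1}$ yields the claimed vector after telescoping the first coordinate $-\mu_1-\dots-\mu_{k-1}$ and simplifying shifted entries such as $\mu_{k-1}+\mu_k+1=\la_{k-1}$. The base case $k=1$ is the one-step formula itself.

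For the three formulas involving the short root I would treat the transitions through $s_{r-1}$ and $s_r$ individually, since this is where the asymmetric entries $a_{r-1,r}=-1\ne a_{r,r-1}=-2$ enter and create the coefficient $2$ in front of $\la_{r-1}$. Concretely, $s_{r-1}$ moves the last coordinate by $-(\la_{r-1}+1)a_{r,r-1}=2(\la_{r-1}+1)$, which produces the term $2\la_{r-1}+\la_r+2$ in $w_{r-1}\cdot\la$; feeding $s_r\cdot\la$ (with $a_{r-1,r}=-1$) into that formula gives $w_r\cdot\la$. For the last family I write the word as $s_1\cdots s_r s_{r-1}\cdots s_m$ with $m=2r-k$, take $m=r-1$ (i.e.\ $k=r+1$) as the base case computed from $w_r\cdot\la$, and induct downward on $m$ by peeling off $s_m$, again a standard long-root reflection for $m\lle r-2$. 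The only genuine care point throughout is the bookkeeping near node $r$ and the repeated telescoping of the first coordinate; away from the short root the computation is the familiar type $\mathrm A$ one, so no real difficulty arises beyond organizing these substitutions.
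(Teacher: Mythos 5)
Your proposal is correct and is essentially the paper's own argument: the paper likewise records the single-reflection formulas $s_k\cdot\la$ (for $k\le r-2$ and $k=r$, plus the special $s_{r-1}\cdot\la$ with the coefficient $2\la_{r-1}+\la_r+2$ coming from $a_{r,r-1}=-2$) and then composes, leaving the telescoping implicit with ``the lemma follows.'' Your explicit induction via the left-action property $(uv)\cdot\la=u\cdot(v\cdot\la)$, with the cancellations $\mu_{k-1}+\mu_k+1=\la_{k-1}$ and the downward peeling of $s_m$ in the long words, is just a careful write-up of that same computation, and all your Cartan-matrix inputs check out against the paper's normalization.
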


\begin{proof}
If $k=1,\dots, r-2, r$, the action of a simple reflection is given by
\[s_k\cdot\la=(\la_1,\dots,\la_{k-2},\la_{k-1}+\la_k+1,-\la_k-2,\la_k+\la_{k+1}+1,\la_{k+2},\dots,\la_r).\]
In addition,
\[
s_{r-1}\cdot\la=(\la_1,\dots,\la_{r-3},\la_{r-2}+\la_{r-1}+1,-\la_{r-1}-2,2\la_{r-1}+\la_r+2).
\]The lemma follows.\end{proof}

We also prepare the inverse formulas.
\begin{lem}\label{lem3.9} Let $\theta=(\theta_1,\dots,\theta_r)\in\h^*$.
We have
\[
(s_k\dots s_1)\cdot\theta=(\theta_2,\dots,\theta_k,-\theta_1-\dots-\theta_k-k-1,\theta_1+\dots+\theta_{k+1}+k,\theta_{k+2},\dots,\theta_r),
\] where $k=1,\dots, r-2$,
\[
(s_{r-1}\dots s_1)\cdot\theta=(\theta_2,\dots,\theta_{r-1},-\theta_1-\dots-\theta_{r-1}-r,2\theta_1+\dots+2\theta_{r-1}+\theta_r+2r-2 ).
\]
\[
(s_r\dots s_1)\cdot\theta=(\theta_2,\dots,\theta_{r-1},\theta_1+\dots+\theta_{r}+r-1,-2\theta_1-\dots-2\theta_{r-1}-\theta_r-2r),
\] and
\begin{align*}
(s_{2r-k}s_{2r-k+1}&\dots s_rs_{r-1}\dots s_1)\cdot \theta\\=(&\theta_2,\dots,\theta_{2r-k-1},\theta_1+\dots+\theta_{2r-k-1}+2\theta_{2r-k}+\dots+2\theta_{r-1}+\theta_r+k-1,\\&-\theta_1-\dots-\theta_{2r-k}-2\theta_{2r-k+1}-\dots-2\theta_{r-1}-\theta_r-k,\theta_{2r-k+1},\dots,\theta_r),
\end{align*}
where $k=r+1,\dots, 2r-1$.
In particular,
$$(s_1s_2\dots s_rs_{r-1}\dots s_1)\cdot\theta=(-\theta_1-2\theta_2-\dots-2\theta_{r-1}-\theta_r-2r+1,\theta_2,\dots,\theta_r).$$
\qed \end{lem}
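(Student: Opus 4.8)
The four families of words in this lemma are exactly the inverses of the four families computed in Lemma \ref{lem3.10}: since each $s_i$ is an involution, the inverse of $s_1s_2\cdots s_k$ is $s_k\cdots s_2s_1$, and the inverse of $s_1\cdots s_rs_{r-1}\cdots s_{2r-k}$ is $s_{2r-k}\cdots s_{r-1}s_rs_{r-1}\cdots s_1$. The shifted action is a genuine left action of $\mc W$, namely $(uv)\cdot\la=u\cdot(v\cdot\la)$ and $e\cdot\la=\la$, as one reads off from $w\cdot\la=w(\la+\rho)-\rho$. Hence, writing $w=s_1\cdots s_k$ and $w^{-1}=s_k\cdots s_1$, the asserted identity $w^{-1}\cdot\theta=\mu$ (with $\mu$ the claimed right-hand side) is equivalent — after applying $w\cdot(-)$, which is a bijection of $\h^*$, and using $w\cdot(w^{-1}\cdot\theta)=\theta$ — to the single closed-form identity $w\cdot\mu=\theta$. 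So the whole lemma reduces to substituting each claimed right-hand side into the matching formula of Lemma \ref{lem3.10} and checking that $\theta$ is recovered.

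First I would carry this out for the generic family $k=1,\dots,r-2$. Setting $\la=\mu$ equal to the claimed value of $(s_k\cdots s_1)\cdot\theta$, its coordinates are $\la_j=\theta_{j+1}$ for $j<k$, then $\la_k=-\sum_{i=1}^k\theta_i-k-1$, $\la_{k+1}=\sum_{i=1}^{k+1}\theta_i+k$, and $\la_j=\theta_j$ for $j>k+1$. Feeding these into the first formula of Lemma \ref{lem3.10}, the new first coordinate $-\sum_{i=1}^k\la_i-k-1$ telescopes (the partial sum collapses to $-\theta_1-k-1$) to $\theta_1$, the $(k+1)$-st coordinate $\la_k+\la_{k+1}+1$ collapses to $\theta_{k+1}$, and the remaining coordinates are visibly $\theta_2,\dots,\theta_k,\theta_{k+2},\dots,\theta_r$; thus the output is $\theta$. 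The three remaining families are treated identically, using the second, third and fourth formulas of Lemma \ref{lem3.10}: in each case the verification is a telescoping cancellation of the running partial sums against the explicit additive integer constants.

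Equivalently, and as a consistency check on the bookkeeping, one can prove all formulas by induction on the length of the word, peeling off the leftmost reflection and using the single-step rule $s_k\cdot\la=\la-(\la_k+1)\alpha_k$, whose coordinate form is recorded in the proof of Lemma \ref{lem3.10}. The ascending family grows one $s_k$ at a time for $k=1,\dots,r$, with the $\mathrm{B}_r$-specific steps $s_{r-1}$ and $s_r$ producing the $2\la_{r-1}+\la_r+2$ and short-root entries; the last family is then obtained by prepending $s_{r-1},s_{r-2},\dots,s_1$ to the base case $(s_r\cdots s_1)\cdot\theta$, and the displayed special case $k=2r-1$ is simply its instance $2r-k=1$.

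The only genuine subtlety — and where I expect to have to be most careful — is the boundary bookkeeping. The index blocks $\theta_2,\dots,\theta_{2r-k-1}$ and the sums $2\theta_{2r-k}+\dots+2\theta_{r-1}$ become empty at the extreme values of $k$, and the single-reflection rule must be truncated at the end nodes $s_1,s_{r-1},s_r$, where one neighbor is missing and where the short root $\alpha_r$ forces the asymmetric entries $\langle\alpha_{r-1},\alpha_r^\vee\rangle=-2$ and $\langle\alpha_r,\alpha_{r-1}^\vee\rangle=-1$ to appear (this is what yields the coefficient-$2$ terms such as $2\la_{r-1}+\la_r+2$ in the last coordinate). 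Once these endpoint conventions are fixed consistently, every interior verification is the same exact telescoping cancellation, so no genuinely new computation occurs.
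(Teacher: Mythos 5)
Your argument is correct, and it takes a slightly different (and cleaner) route than the one the paper implicitly intends. The paper states Lemma \ref{lem3.9} with no proof at all, the understood justification being the same direct computation as in the proof of Lemma \ref{lem3.10}: iterate the single-reflection coordinate formulas recorded there. Your main route instead exploits that the shifted action is a genuine left action, so $(s_k\cdots s_1)\cdot\theta=\mu$ is equivalent to $(s_1\cdots s_k)\cdot\mu=\theta$, which reduces the whole lemma to substituting the claimed tuples into the four formulas of Lemma \ref{lem3.10} and watching the cancellation. I verified your telescoping in the generic family ($\mu_1+\dots+\mu_k=-\theta_1-k-1$, so the first output coordinate is $\theta_1$, and $\mu_k+\mu_{k+1}+1=\theta_{k+1}$) and checked the analogous collapses in the remaining three families (e.g.\ in the fourth family $\mu_1+\dots+\mu_{2r-k-1}=\theta_1+2(\theta_2+\dots+\theta_{r-1})+\theta_r+k-1$ and $\mu_{2r-k}+\dots+\mu_{r-1}=-(\theta_1+\dots+\theta_r)-k$, after which the first coordinate and the entry $\mu_{2r-k-1}+\mu_{2r-k}+1=\theta_{2r-k}$ come out right); they all work. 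What the inversion buys is a one-step verification per family in place of an induction on the word length, at the cost of presupposing Lemma \ref{lem3.10}; the induction you sketch as a consistency check, peeling off reflections via $s_k\cdot\la=\la-(\la_k+1)\alpha_k$, is exactly the paper's implicit proof. The subtleties you flag are real but benign: the general display of the fourth family degenerates at $k=2r-1$, where the entry in position $2r-k-1=0$ disappears — which is precisely why the paper records the palindromic case separately — and your Cartan-matrix entries $\langle\alpha_{r-1},\alpha_r^\vee\rangle=-2$, $\langle\alpha_r,\alpha_{r-1}^\vee\rangle=-1$ match the paper's normalization ($\alpha_r$ short), consistent with the entry $2\la_{r-1}+\la_r+2$ produced by $s_{r-1}$.
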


\begin{lem}\label{lem3.8}
Let $\la\in\cP^+$ and let $\bs l$ be as in \eqref{admit}. Suppose the Bethe ansatz equation associated to $\bm \La=(\la,\omega_1),\bm z=(0,1),\bm l$, where $\la\in\cP^+$, has solutions. Then $\bm l$ is admissible. Moreover, if $l\gge r+1$, then we can perform the reproduction procedure in the $(2r-l+1)$-th, $(2r-l+2)$-th, $\dots$, $(r-1)$-th, $r$-th, $(r-1)$-th, $\dots$, $1$-st directions successively. If $l<r$, we can perform the reproduction procedure in the $l$-th, $(l-1)$-th, $\dots$, $1$-st directions successively.
\end{lem}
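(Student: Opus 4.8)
The plan is to iterate the three reproduction lemmas, Lemmas \ref{lem3.4}, \ref{lem3.12} and \ref{lem3.7}, along the prescribed list of directions, dropping the length one notch at a time until we reach the trivial critical point of length $0$. By assumption the Bethe ansatz equation has a solution, represented by an $r$-tuple $\bm y^{(l)}$ of the shape fixed in \eqref{admit}. Each of the three lemmas takes as input a critical point of the current length together with the hypothesis that the component $\nu_k$ of the current weight $\nu$ in the direction $k$ of the impending reproduction satisfies $\nu_k\gge 0$, and it returns both a critical point of the next, shorter length (with weight $s_k\cdot\nu$) and the strict inequality $\nu_k>0$, respectively $\nu_k\gge 2$ in Lemma \ref{lem3.12}. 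Thus the entire argument reduces to checking, at each step, that the component in whose direction we are about to reproduce is non-negative; once this is granted the sequence runs to the end, which proves the ``moreover'' statement, and admissibility falls out of the very first step.

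I would verify this in the two cases separately. When $l<r$ the sequence uses only Lemma \ref{lem3.7}, in the directions $l,l-1,\dots,1$. The first step has $\nu=\la\in\cP^+$, so its hypothesis $\nu_l=\la_l\gge 0$ holds; the lemma then forces $\la_l>0$, which is exactly the admissibility condition for $l\lle r$. For a later step in direction $l-j$ with $j\gge 1$ the current weight is $(s_{l-j+1}\cdots s_l)\cdot\la$, and iterating the simple-reflection formula recorded in the proof of Lemma \ref{lem3.10} gives its $(l-j)$-th component as $\la_{l-j}+\dots+\la_l+j$, which is positive; this clears the hypothesis of the next application. When $l\gge r+1$ the $r$-tuple has $2r-l$ linear and $l-r$ quadratic entries, and the list of directions breaks into three phases: Lemma \ref{lem3.4} in the ascending directions $2r-l+1,\dots,r-1$, then Lemma \ref{lem3.12} in direction $r$, which lowers the length by two, then Lemma \ref{lem3.7} in the descending directions $r-1,\dots,1$. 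The same bookkeeping controls every step, the weights being computed by iterating the reflections of Lemma \ref{lem3.10}; the first step, direction $2r-l+1$ by Lemma \ref{lem3.4} when $l>r+1$ or direction $r$ by Lemma \ref{lem3.12} when $l=r+1$, again has $\nu=\la$, so its non-negativity hypothesis is automatic, and its strict output $\la_{2r-l+1}>0$, respectively $\la_r\gge 2$, is the admissibility condition for $r+1<l\lle 2r$, respectively $l=r+1$.

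The one genuinely delicate point is the weight bookkeeping in the case $l\gge r+1$: reproducing in direction $k$ negates the $k$-th component to $-\nu_k-2<0$, so the order of directions is arranged precisely so as never to consult a slot that has just been negated, and one must confirm that every component actually consulted is a sum of the $\la_i$ with a strictly positive integer added. This is most delicate at the short-root end of the diagram, where $s_{r-1}$ and $s_r$ act by the special rules $s_{r-1}\cdot\nu=(\dots,\nu_{r-2}+\nu_{r-1}+1,-\nu_{r-1}-2,2\nu_{r-1}+\nu_r+2)$ and $s_r\cdot\nu=(\dots,\nu_{r-1}+\nu_r+1,-\nu_r-2)$; here I would check directly that the components consulted at the direction-$r$ step and at the first descending direction-$(r-1)$ step stay non-negative, after which the remaining descending steps repeat the pattern of the case $l<r$. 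Finally, the value $l=r$ is not handled by this recursion; it is taken from \cite{MV3} as recorded in Remark \ref{rem type A}.
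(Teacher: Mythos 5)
Your proposal is correct and follows the paper's own route: iterate Lemmas \ref{lem3.4}, \ref{lem3.12} and \ref{lem3.7} along the prescribed directions, verifying the hypotheses $\nu_k\gge 0$ at each step from the explicit shifted Weyl action formulas of Lemmas \ref{lem3.10} and \ref{lem3.9}, with admissibility extracted from the strict inequality output by the first step. Your treatment is in fact more detailed than the paper's two-sentence proof (including the correct deferral of $l=r$ to \cite{MV3} via Remark \ref{rem type A}), but it is the same argument.
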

\begin{proof}
We use Lemmas \ref{lem3.4}-\ref{lem3.7}. The condition of the lemmas of the form $\nu_k\gge 0$ follows from Lemmas \ref{lem3.10} and \ref{lem3.9}.
\end{proof}
\begin{cor}
Let $\la\in\cP^+$ and $\bs l$ as in \eqref{admit}. The Bethe ansatz equation \eqref{eq:bae} associated to $\bm \La,\bm z, \bs l$, has at most one solution. If $l$ is not admissible it has no solutions.
\end{cor}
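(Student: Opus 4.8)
The plan is to derive both assertions from the descending recursion assembled in Lemma~\ref{lem3.8}, by pushing an arbitrary solution down to the trivial one at length~$0$.

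The statement about non-admissible $l$ is immediate: Lemma~\ref{lem3.8} asserts that if the Bethe ansatz equation associated to $(\la,\omega_1),\bm z,\bm l$ has a solution then $\bm l$ is admissible. Hence a non-admissible $l$ admits no solution, and only the uniqueness in the admissible case remains to be shown.

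For uniqueness, fix an admissible $\bm l$ of length $l$ and suppose some $r$-tuple represents a solution. By Lemma~\ref{lem3.8} we may apply the reproduction procedures of Lemmas~\ref{lem3.4}, \ref{lem3.12}, \ref{lem3.7} successively, in the prescribed directions, producing represented solutions of strictly smaller length and terminating at the length-$0$ tuple $(1,\dots,1)$; for $l\gge r+1$ the single length-two drop occurs in direction $r$ via Lemma~\ref{lem3.12}, every other drop being by one. Each step is a genuine single-valued map on represented solutions, since each of these lemmas produces a \emph{unique} new $r$-tuple, and the sequence of intermediate weights depends only on $\la$ and $l$ rather than on the particular solution (Lemmas~\ref{lem3.10} and \ref{lem3.9}). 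Because the only $r$-tuple representing a critical point at length $0$ is $(1,\dots,1)$, it suffices to prove that each descending step is injective: the composite is then an injection of the set of length-$l$ solutions into a one-point set, whence there is at most one solution.

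Injectivity of a single step is the only substantive point, and I would argue it directly. In each of the three lemmas only the $k$-th polynomial is altered, $y_k\mapsto u_k$, the remaining components being copied verbatim, and $u_k$ is characterized by a relation of the shape $W(y_k,\,x^{\nu_k+1}u_k)=-\nu_k x^{\nu_k}y_{k-1}y_{k+1}$, with the evident modifications in Lemmas~\ref{lem3.12} and \ref{lem3.7} where $u_k=1$. If two length-$l$ solutions $\bm y$ and $\bm y'$ have the same image, they agree in every component $j\ne k$ and share the same $u_k$, so the two defining relations have identical right-hand sides and identical second Wronskian arguments; subtracting and using the bilinearity of $W$ gives $W(y_k-y_k',\,x^{\nu_k+1}u_k)=0$. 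Hence $y_k-y_k'$ is proportional to $x^{\nu_k+1}u_k$. But $y_k$ and $y_k'$ are monic of the same degree (one or two), so their difference has degree at most $1$, whereas $x^{\nu_k+1}u_k$ has degree at least $2$ because $\nu_k>0$ (indeed $\nu_k\gge2$ in Lemma~\ref{lem3.12}), as established inside the lemmas; the proportionality therefore forces $y_k=y_k'$. One could instead invoke the reversibility of the reproduction procedure recorded after Theorem~\ref{thm:2.5}, but the degree count keeps the argument self-contained. This yields at most one solution; the complementary and harder question of existence is settled later by the explicit formulas of Section~\ref{sec check generic}.
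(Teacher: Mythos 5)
There is a genuine gap: your descent never reaches the case $l=r$, and the statement for $l=r$ is left unproved. Lemma \ref{lem3.8}, which you invoke to run the recursion, only covers $l\gge r+1$ and $l<r$; correspondingly, the chain of Lemmas \ref{lem3.4}, \ref{lem3.12}, \ref{lem3.7} skips length $r$ entirely (Lemma \ref{lem3.12} drops from $r+1$ straight to $r-1$, and Lemma \ref{lem3.7} starts at $l\lle r-1$). This is not an accident of bookkeeping that a more careful reading would repair: for $l=r$ the tuple is $\bs l=(1,\dots,1)$ and the relevant weight $\mu=\omega_1-\alpha_1-\dots-\alpha_r$ is the \emph{zero} weight of $V_{\omega_1}$, which satisfies $\langle\mu,\alpha_k^\vee\rangle=0$ for every $k$; hence every reproduction step fixes the length, no descent from $l=r$ to $l=0$ exists, and your injection into a one-point set cannot be constructed. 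The paper handles this case separately, quoting Theorem 2 of \cite{MV3} via Remark \ref{rem type A} (for $l\lle r$ the Bethe ansatz equations coincide with those of type $\mathrm{A}_r$, where that theorem gives uniqueness and the non-existence statement for $\la_r=0$). Your proof, as written, asserts termination at length $0$ for every $l$ and is therefore incomplete at exactly this point; the fix is local but requires either the citation of \cite{MV3} or an independent argument for the zero-weight case.

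Apart from this omission, your argument is correct and in one respect more self-contained than the paper's: where the paper simply appeals to the invertibility of the reproduction procedure (recorded after Theorem \ref{thm:2.5}), you prove injectivity of each descent step directly. The Wronskian subtraction $W(y_k-y_k',\,x^{\nu_k+1}u_k)=0$ together with the degree count is valid in all three lemmas: the difference of two monic polynomials of equal degree $d\in\{1,2\}$ has degree at most $d-1\lle 1$, while the quasi-polynomial second argument has degree $\nu_k+1\gge 2$ (degree $\nu_k+2\gge 3$ in Lemma \ref{lem3.4}, and $\nu_r+1\gge 3$ in Lemma \ref{lem3.12} since $\nu_r\gge 2$), so proportionality forces $y_k=y_k'$. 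Your observation that the intermediate weights depend only on $\la$ and $l$ (Lemmas \ref{lem3.10}, \ref{lem3.9}) is also correct and needed for the composite map to be well defined. So the derivation of "no solutions for non-admissible $l$" from Lemma \ref{lem3.8} and the uniqueness for all admissible $l\ne r$ stand; only the $l=r$ case must be added.
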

\begin{proof} If $l\neq r$, then by Lemma \ref{lem3.8}, every solution of the Bethe ansatz equations by a series of reproduction procedures produces a solution for $l=0$. These reproduction procedures are invertible, and for $l=0$ we clearly have only one solution $(1,\dots,1)$. Therefore the conclusion.

For $l=r$ the corollary follows from Theorem 2 in \cite{MV3}, see also Remark \ref{rem type A}.
\end{proof}

\subsection{Explicit solutions}
In this section, we give explicit formulas for the solution of the Bethe ansatz equation corresponding to data $\bs \La=(\la,\omega_1)$, $\bs z=(0,1)$ and $l$, $\la\in\mathcal P^+$, $l\in\{0,\dots,2r\}$.

We denote by $\theta$ the weight obtained from $\la$ after the successive reproduction procedures as in Lemma \ref{lem3.8}.
Explicitly, if $l\lle r-1$, then $\theta=(s_1\dots s_{l-1}s_l)\cdot\la$; if $l\gge r+1$, then $\theta=(s_1\dots s_{r-1}s_rs_{r-1}\dots s_{2r-l+1})\cdot\la$. We recover the solution starting from data $(\theta,\omega_1)$, $z=(0,1)$ and $l=0$, where the solution is $(1,\dots,1)$ by applying the reproduction procedures in the opposite direction explicitly. In the process we obtain monic polynomials $(y_1^{(l)},\dots,y_r^{(l)})$ representing a critical point.

Recall that for $l\lle r$, $y_1,\dots,y_{l}$ are linear polynomials and $y_{l+1},\dots,y_r$ are all equal  to one.
We use the notation: $y_i^{(l)}=x-c_i^{(l)}$, $i=1,\dots,l$.

Recall further that for $l>r$, the polynomials  $y_1,\dots,y_{2r-l}$ are linear and $y_{2r-l+1},\dots,y_r$ are quadratic.
We use the notation: $y_i^{(l)}=x-c_i^{(l)}$, $i=1,\dots,2r-l$ and $y_i^{(l)}=(x-a_i^{(l)})(x-b_i^{(l)})$, $i=2r-l+1,\dots,r$.

Formulas for $c_i^{(l)}$, $a_i^{(l)}$ and $b_i^{(l)}$ in terms of $\theta_i$, clearly, do not depend on $l$, in such cases we simply write $c_i$, $a_i$ and $b_i$.

Denote $y_0^{(k)}=x-1$, $c_0=1$ and $T_1(x)=x^{\la_1}$. Also let
$$
A^{(k)}(\theta)=\begin{cases} (s_k\dots s_1)\cdot\theta &\mbox{if } k\lle r,\\
(s_{2r-k}\dots s_{r-1}s_rs_{r-1}\dots s_1)\cdot\theta &\mbox{if } k\gge r+1. \end{cases}
$$
Explicitly, $A^{(k)}(\theta)$ are given in Lemma \ref{lem3.9}.

\subsubsection{Constant term of $y_i$ in terms of $\theta$} For brevity, we write simply $A^{(k)}$ for $A^{(k)}(\theta)$. We also use $A^{(k)}_i$ for components of the weight $A^{(k)}$: $A^{(k)}=(A^{(k)}_1,\dots,A^{(k)}_r)$.

For $l\lle r-1$, we have $\bm y^{(l-1)}=(x-c_1,\dots,x-c_{l-1},1,\dots,1)$. It is easy to check that if $l$ is admissible and $\la$ is dominant then $A^{(l-1)}_l=\theta_1+\dots+\theta_l+(l-1)$ is a negative integer.

We solve for $\tl{y}_l^{(l-1)}$,
\[
W(y_l^{(l-1)},\tl{y}_l^{(l-1)})=T_l^{(l-1)}y_{l-1}^{(l-1)}y_{l+1}^{(l-1)}=x^{A_l^{(l-1)}}(x-c_{l-1}).
\]In other words
\[-(\tl{y}_l^{(l-1)})'=x^{A_l^{(l-1)}+1}-c_{l-1}x^{A_l^{(l-1)}}.
\]Choosing the solution which is a quasi-polynomial, we obtain
\begin{align*}\tl{y}_l^{(l-1)}=&\frac{-x^{A_l^{(l-1)}+1}}{A_l^{(l-1)}+2}\left(x-\frac{A_l^{(l-1)}+2}{A_l^{(l-1)}+1}c_{l-1}\right).\end{align*}
Therefore, the reproduction procedure in the $l$-th direction gives $\bm y^{(l)}=(x-c_1,\dots,x-c_{l},1,\dots,1)$, where $c_l=\dfrac{A_l^{(l-1)}+2}{A_l^{(l-1)}+1}c_{l-1}$.
Substituting the value for $A_l^{(l-1)}$ and using induction, we have
$$
c_k=\prod_{j=1}^k\frac{\theta_1+\dots+\theta_j+j+1}{\theta_1+\dots+\theta_j+j},
$$
for $k=1,\dots, r-1$.

For $l=r+1$ we have $\bm y^{(r-1)}=(x-c_1,\dots,x-c_{r-1},1)$ and $A_r^{(r-1)}=2\theta_1+\dots+2\theta_{r-1}+\theta_r+2r-2\in\Z_{<0}$.
We solve for $\tl{y}_r^{(r-1)}$,
\[W(y_r^{(r-1)},\tl{y}_r^{(r-1)})=T_r^{(r-1)}(y_{r-1}^{(r-1)})^2=x^{A_r^{(r-1)}}(x-c_{r-1})^2.\]
This implies
\[\tl{y}_r^{(r-1)}=\frac{-x^{A_r^{(r-1)}+1}}{A_r^{(r-1)}+3}\left(x^2-\frac{2(A_r^{(r-1)}+3)}{A_r^{(r-1)}+2}c_{r-1}x+\frac{A_r^{(r-1)}+3}{A_r^{(r-1)}+1}c_{r-1}^2\right) .\]
Therefore, after performing the reproduction procedure in $r$-th direction to $\bs y^{(r-1)}$, we obtain the $r$-tuple
$\bm y^{(r+1)}=(x-c_1,\dots,x-c_{r-1},(x-a_r)(x-b_r))$, where
\begin{align}\label{3.4.2}
a_rb_r=\frac{A_r^{(r-1)}+3}{A_r^{(r-1)}+1}c_{r-1}^2
=\left(\prod_{j=1}^{r-1}\frac{\theta_1+\dots+\theta_j+j+1}{\theta_1+\dots+\theta_j+j}\right)^2\frac{2\theta_1+\dots+2\theta_{r-1}+\theta_r+2r+1}{2\theta_1+\dots+2\theta_{r-1}+\theta_r+2r-1}.\nonumber
\end{align}

For $l$ such that $r+2\leqslant l\leqslant 2r$, let $k=2r-l$, then $\bm y^{(2r-k)}=(x-c_1,\dots,x-c_{k},(x-a_{k+1})(x-b_{k+1}),\dots,(x-a_r)(x-b_r))$ and $A_{k}^{(2r-k-1)}=\theta_1+\dots+\theta_{k}+2\theta_{k+1}+\dots+2\theta_{r-1}+\theta_r+2r-k-2\in\Z_{<0}$.

We have \[W(y_k^{(2r-k)},\tl{y}_k^{(2r-k)})=x^{A_k^{(2r-k-1)}}y_{k-1}^{(2r-k)}y_{k+1}^{(2r-k)},\] substituting $-(A_k^{(2r-k-1)}+2)\tl{y}_k^{(2r-k)}=x^{A_k^{(2r-k-1)}+1}(x-a_k)(x-b_k)$, we get
\begin{align}\label{3.4.3}
&(A_{k}^{(2r-k-1)}+1)(x-c_k)(x-a_k)(x-b_k)+x(x-c_k)(x-a_k)\nonumber\\&+x(x-c_k)(x-b_k)-x(x-a_k)(x-b_k)\\
=&(A_k^{(2r-k-1)}+2)(x-a_{k+1})(x-b_{k+1})(x-c_{k-1}).\nonumber
\end{align}
Substituting $x=0$ into (\ref{3.4.3}), we obtain
\beq\label{3.4.4}
(A_{k}^{(2r-k-1)}+1)c_ka_k b_k=(A_k^{(2r-k-1)}+2)c_{k-1}a_{k+1}b_{k+1}.
\eeq
It results in
\begin{align}\label{3.4.7}
a_kb_k=&c_{r-1}c_{k-1}\frac{2\theta_1+\dots+2\theta_{r-1}+\theta_r+2r+1}{2\theta_1+\dots+2\theta_{r-1}+\theta_r+2r-1}\nonumber\\
&\times\prod_{i=1}^{r-k}\frac{(\theta_1+\dots+\theta_{r-1})+(\theta_{r}+\theta_{r-1}+\dots+\theta_{r+1-i})+r+i}{(\theta_1+\dots+\theta_{r-1})+(\theta_{r}+\theta_{r-1}+\dots+\theta_{r+1-i})+r+i-1}\nonumber\\
=&\frac{2\theta_1+\dots+2\theta_{r-1}+\theta_r+2r+1}{2\theta_1+\dots+2\theta_{r-1}+\theta_r+2r-1}\prod_{j=1}^{r-1}\frac{\theta_1+\dots+\theta_j+j+1}{\theta_1+\dots+\theta_j+j}
\prod_{j=1}^{k-1}\frac{\theta_1+\dots+\theta_j+j+1}{\theta_1+\dots+\theta_j+j}
\nonumber\\&\times\prod_{i=1}^{r-k}\frac{(\theta_1+\dots+\theta_{r-1})+(\theta_{r}+\theta_{r-1}+\dots+\theta_{r+1-i})+r+i}{(\theta_1+\dots+\theta_{r-1})+(\theta_{r}+\theta_{r-1}+\dots+\theta_{r+1-i})+r+i-1}.
\end{align}
\subsubsection{The formula for $a_k+b_k$ in terms of $\theta$}
Comparing the coefficient of $x^2$ in (\ref{3.4.3}), we obtain
\beq\label{3.4.8}
(A_{k}^{(2r-k-1)}+1)(a_k+b_k+c_k)+2c_k=(A_{k}^{(2r-k-1)}+2)(a_{k+1}+b_{k+1}+c_{k-1}).
\eeq
Comparing the coefficient of $x$ in (\ref{3.4.3}), we obtain
\begin{align}\label{3.4.9}
&(A_{k}^{(2r-k-1)}+1)(c_k(a_k+b_k)+a_kb_k)+c_k(a_k+b_k)-a_kb_k\nonumber
\\=&(A_{k}^{(2r-k-1)}+2)(c_{k-1}(a_{k+1}+b_{k+1})+a_{k+1}b_{k+1}).
\end{align}

Solving (\ref{3.4.8}) and (\ref{3.4.9}) for $a_k+b_k$, one has
$$
a_k+b_k=\frac{(A_{k}^{(2r-k-1)}+2)(a_{k+1}b_{k+1}-c_{k-1}^2)+(A_{k}^{(2r-k-1)}+3)c_{k-1}c_k-A_{k}^{(2r-k-1)}a_kb_k}{(A_{k}^{(2r-k-1)}+2)c_k-(A_{k}^{(2r-k-1)}+1)c_{k-1}}.
$$
This gives the explicit formulas,
\begin{align*}
a_k+b_k=&\frac{2\theta_1+\dots+2\theta_{r-1}+2r+1}{2\theta_1+\dots+2\theta_{r-1}+2r}\prod_{j=1}^{k-1}\frac{\theta_1+\dots+\theta_j+j+1}{\theta_1+\dots+\theta_j+j}\nonumber\\
&\times\left(1+\prod_{j=k}^{r-1}\frac{\theta_1+\dots+\theta_j+j+1}{\theta_1+\dots+\theta_j+j}\times\right.\nonumber\\
&\left.\prod_{j=k}^{r-1}\frac{\theta_1+\dots+\theta_j+2\theta_{j+1}+\dots+2\theta_{r-1}+\theta_r+2r-k}{\theta_1+\dots+\theta_j+2\theta_{j+1}+\dots+2\theta_{r-1}+\theta_r+2r-k-1}\right).
\end{align*}
These solutions indeed satisfy (\ref{3.4.3}) for each $k$. This can be checked by a direct computation.
\subsubsection{Final formulas}\label{sec3.5.3} We use Lemma \ref{lem3.10} to express $\theta_i$ by $\la_j$. Here are the final formulas.

If $l<r$, then
\beq\label{3.4.12}
c_j^{(l)}=\prod_{i=1}^j\frac{\la_i+\dots+\la_l+l-i}{\la_i+\dots+\la_l+l-i+1},\quad j=1,\dots,l.
\eeq

We also borrow from \cite{MV3} the $l=r$ result.

\beq\label{3.4.16}
c_j^{(r)}=\prod_{i=1}^j\frac{\la_i+\dots+\la_{r-1}+\la_r/2+r-i}{\la_i+\dots+\la_{r-1}+\la_r/2+r-i+1},\quad j=1,\dots,r.
\eeq

If $l\gge r+1$, then
\beq\label{3.4.13}
c_k^{(l)}=\prod_{j=1}^k\frac{\la_j+\dots+\la_{2r-l}+2\la_{2r-l+1}+\dots+2\la_{r-1}+\la_r+l-j-1}{\la_j+\dots+\la_{2r-l}+2\la_{2r-l+1}+\dots+2\la_{r-1}+\la_r+l-j},
\eeq
for $k=1,\dots,2r-l$. Finally, for $2r-l+1\lle k\lle r$, we have
\begin{align}\label{3.4.14}
a_k^{(l)}b_{k}^{(l)}=&\left(\prod_{j=1}^{2r-l}\frac{\la_j+\dots+\la_{2r-l}+2\la_{2r-l+1}+\dots+2\la_{r-1}+\la_r+l-j-1}{\la_j+\dots+\la_{2r-l}+2\la_{2r-l+1}+\dots+2\la_{r-1}+\la_r+l-j}\right)^2\nonumber\\
&\times \prod_{j=2r-l+1}^{r-1}\frac{\la_{2r-l+1}+\dots+\la_j+2\la_{j+1}+\dots+2\la_{r-1}+\la_r+l-j-2}{\la_{2r-l+1}+\dots+\la_j+2\la_{j+1}+\dots+2\la_{r-1}+\la_r+l-j-1}\nonumber\\
&\times\prod_{j=2r-l+1}^{k-1}\frac{\la_{2r-l+1}+\dots+\la_j+2\la_{j+1}+\dots+2\la_{r-1}+\la_r+l-j-2}{\la_{2r-l+1}+\dots+\la_j+2\la_{j+1}+\dots+2\la_{r-1}+\la_r+l-j-1}\nonumber\\
&\times\prod_{i=1}^{r-k}\frac{\la_{2r-l+1}+\dots+\la_{r-i}+l-r-i-1}{\la_{2r-l+1}+\dots+\la_{r-i}+l-r-i}\nonumber\\
&\times\frac{2\la_{2r-l+1}+\dots+2\la_{r-1}+\la_r+2l-2r-3}{2\la_{2r-l+1}+\dots+2\la_{r-1}+\la_r+2l-2r-1},
\end{align}
and
\begin{align}\label{3.4.15}
a_k^{(l)}+b_{k}^{(l)}=&\frac{2\la_{2r-l+1}+\dots+2\la_{r-1}+\la_r+2l-2r-3}{2\la_{2r-l+1}+\dots+2\la_{r-1}+\la_r+2l-2r-2}\nonumber\\
&\times \prod_{j=1}^{2r-l}\frac{\la_j+\dots+\la_{2r-l}+2\la_{2r-l+1}+\dots+2\la_{r-1}+\la_r+l-j-1}{\la_j+\dots+\la_{2r-l}+2\la_{2r-l+1}+\dots+2\la_{r-1}+\la_r+l-j}\nonumber\\
&\times\left(\prod_{j=2r-l+1}^{k-1}\frac{\la_{2r-l+1}+\dots+\la_j+2\la_{j+1}+\dots+2\la_{r-1}+\la_r+l-j-2}{\la_{2r-l+1}+\dots+\la_j+2\la_{j+1}+\dots+2\la_{r-1}+\la_r+l-j-1}\right.\nonumber\\
&+\prod_{j=2r-l+1}^{r-1}\frac{\la_{2r-l+1}+\dots+\la_j+2\la_{j+1}+\dots+2\la_{r-1}+\la_r+l-j-2}{\la_{2r-l+1}+\dots+\la_j+2\la_{j+1}+\dots+2\la_{r-1}+\la_r+l-j-1}\nonumber\\
&\times \left.\prod_{i=1}^{r-k}\frac{\la_{2r-l+1}+\dots+\la_{r-i}+l-r-i-1}{\la_{2r-l+1}+\dots+\la_{r-i}+l-r-i}\right).
\end{align}

\subsubsection{The solutions are generic}\label{sec check generic}
In this section we show the solutions are generic.

\begin{thm}\label{thm B2 generic}
Suppose $\la\in\cP^+$ and $\bm l$ is admissible, then $\bm y^{(l)}$ in Section \ref{sec3.5.3} represents a critical point associated to $\bs\La=(\la,\omega_1)$, $\bs z=(0,1)$, and $\bm l$.
\end{thm}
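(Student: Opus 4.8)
The plan is to verify the three genericity conditions \textbf{G1}, \textbf{G2}, \textbf{G3} for the explicitly constructed $r$-tuple $\bm y^{(l)}$, and then invoke Theorem \ref{thm:2.5} to conclude that $\bm y^{(l)}$ represents a critical point. By the discussion in Sections \ref{sec lemmas} and \ref{sec at most one}, the recursion lemmas (Lemmas \ref{lem3.4}, \ref{lem3.12}, \ref{lem3.7}) already guarantee that applying the reproduction procedure \emph{downward} in $l$ preserves genericity, and the unique $l=0$ solution $(1,\dots,1)$ is trivially generic. The genuine content of the theorem is the \emph{reverse} direction: I need to show that each reproduction step going \emph{upward} (from length $l-1$ to length $l$) produces a new polynomial that is still generic. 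As noted in the remark after Lemma \ref{lem3.4}, this is exactly the step the recursion lemmas do not cover, because the newly created polynomial $y_k^{(l)}$ is quadratic and one cannot conclude a priori that its two roots are distinct or that it avoids the forbidden roots. This is why the explicit formulas of Section \ref{sec3.5.3} are needed.

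First I would split into the three cases by the value of $l$ relative to $r$. For $l<r$ all the $y_i$ are linear, so \textbf{G1} (no multiple roots) is automatic, and the case $l=r$ is handled by citing \cite{MV3} via Remark \ref{rem type A}; thus the substantive work is concentrated on $l\geqslant r+1$, where the polynomials $y_i$ for $i\geqslant 2r-l+1$ are genuinely quadratic. For these I would use the explicit expressions \eqref{3.4.14} for $a_k^{(l)}b_k^{(l)}$ and \eqref{3.4.15} for $a_k^{(l)}+b_k^{(l)}$, together with \eqref{3.4.13} for the linear factors $c_k^{(l)}$. The key observation making the verification tractable is that every factor appearing in these products is a ratio of two linear expressions in $\la_1,\dots,\la_r$ that differ by exactly $1$ in the numerator versus denominator; for $\la\in\cP^+$ with $\bm l$ admissible, all these denominators are nonzero (indeed the relevant partial sums are strictly positive integers shifted appropriately), so each $c_k^{(l)}$, each product $a_k^{(l)}b_k^{(l)}$, and each sum $a_k^{(l)}+b_k^{(l)}$ is a well-defined finite nonzero rational number.

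The main obstacle, and the heart of the proof, is \textbf{G1} for the quadratic factors: I must show that the discriminant $(a_k^{(l)}+b_k^{(l)})^2-4\,a_k^{(l)}b_k^{(l)}$ is nonzero, i.e. that $a_k^{(l)}\ne b_k^{(l)}$, so that $y_k^{(l)}$ has two distinct roots. My approach would be to reduce this to a positivity (or sign) statement: after clearing denominators using the common factors identified above, the inequality $a_k\ne b_k$ should follow from showing that the discriminant, as a rational function of the $\la_i$, does not vanish for admissible dominant $\la$. I expect the cleanest route is to argue that $a_k^{(l)}$ and $b_k^{(l)}$ are the two roots of a quadratic whose coefficients are strictly monotone in an auxiliary real parameter, or alternatively to show directly that $4a_kb_k<(a_k+b_k)^2$ by comparing the telescoping products term by term, exploiting that each factor in \eqref{3.4.14} and \eqref{3.4.15} is a ratio of consecutive positive integers. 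Once strict inequality of the discriminant is established, \textbf{G1} holds.

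For the remaining conditions I would proceed more routinely. Condition \textbf{G2} requires that the roots of $y_i^{(l)}$ avoid the roots and singularities of $T_i$; since $\bm z=(0,1)$ and $T_1(x)=x^{\la_1}(x-1)$ while $T_i(x)=x^{\la_i}$ for $i\geqslant 2$, the forbidden points are $0$ and $1$, so I must check that no $c_k^{(l)}$, $a_k^{(l)}$, or $b_k^{(l)}$ equals $0$ or $1$. That $c_k^{(l)}\ne 0$ and the product $a_k^{(l)}b_k^{(l)}\ne 0$ follows immediately from the nonvanishing of the numerators in \eqref{3.4.13} and \eqref{3.4.14} (each numerator factor is a positive integer for admissible $\la$), which also rules out a root at $0$ for the quadratics; ruling out a root at $1$ amounts to checking $y_i^{(l)}(1)\ne 0$, i.e. $1-(a_k+b_k)+a_kb_k\ne 0$, again a direct substitution into the explicit formulas. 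Finally, \textbf{G3} requires that adjacent polynomials $y_i^{(l)}$ and $y_{i+1}^{(l)}$ (those with $a_{i,i+1}<0$) share no common root; here I would use that the reproduction procedure was built precisely so that at each step the relation $W(y_k,x^{\nu_k+1}u_k)=-\nu_k x^{\nu_k}y_{k-1}y_{k+1}$ forces $y_k$ to have no common root with $y_{k-1}y_{k+1}$ (as in the proof of Lemma \ref{lem3.4}), so \textbf{G3} is inherited along the recursion and can be confirmed on the explicit formulas if a direct check is preferred. Assembling \textbf{G1}--\textbf{G3} and applying Theorem \ref{thm:2.5} completes the proof.
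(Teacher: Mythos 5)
There is a genuine gap, and it is located exactly where you correctly predicted the difficulty would be: condition \textbf{G1} for the quadratic factors. Your proposed route is to show the discriminant $(a_k^{(l)}+b_k^{(l)})^2-4a_k^{(l)}b_k^{(l)}$ is nonzero by proving the positivity $4a_kb_k<(a_k+b_k)^2$ via term-by-term comparison of the telescoping products. That inequality is false: already in the paper's $\mathrm{B}_2$, $l=4$ example, writing $p=2\la_1+\la_2+1$ and $q=\la_1+\la_2+1$, the quadratic $y_2^{(4)}$ has discriminant
\begin{equation*}
\left(\frac{2pq}{(p+1)(q+1)}\right)^{2}-\frac{4pq^{2}}{(p+2)(q+1)^{2}}
=\frac{4pq^{2}}{(q+1)^{2}}\cdot\frac{p(p+2)-(p+1)^{2}}{(p+1)^{2}(p+2)}
=\frac{-4pq^{2}}{(q+1)^{2}(p+1)^{2}(p+2)}<0,
\end{equation*}
so the roots are distinct \emph{complex conjugates}, not real. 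You could in principle rescue the plan by proving the discriminant never vanishes (sign irrelevant), but you have not done so, and nothing in your sketch (the ``monotone coefficients'' idea is not an argument) gets there. The paper avoids the discriminant entirely: it argues structurally that if $a_k=b_k$, then since $W(y_k,\tl y_k)=T_ky_{k-1}y_{k+1}$ and, by \textbf{G2}, $y_k$ shares no root with $T_k$, the double root must coincide with a root of a neighbor ($a_k=a_{k\pm1}$, or $a_k=c_{k-1}$ when $k=2r-l+1$). Each such coincidence, fed back into the recursion identities \eqref{3.4.4} and \eqref{3.4.19} and compared with \eqref{3.4.7}, yields the numerical identity \eqref{3.10.1} whose left side is bounded below by $1\cdot 2\cdot\frac35>1$ while the right side is a product of ratios each less than $1$ --- a contradiction. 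This sign-estimate-on-a-forced-identity mechanism is the actual content of the proof and is absent from your proposal.

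Your treatment of \textbf{G3} is also internally inconsistent: you first (correctly) observe that the recursion lemmas only control the \emph{downward} direction and do not cover the newly created quadratics, but then claim \textbf{G3} is ``inherited along the recursion,'' which is precisely the unavailable upward implication. The paper instead proves \textbf{G3} by a separate divisibility analysis of \eqref{3.4.3}: if $a_k=a_{k+1}$ then $x-a_k$ divides $x(x-c_k)(x-b_k)$, and using the already-established facts $a_k\ne b_k$ and $a_k\ne 0$ one is driven to $c_{k-1}=c_k$, contradicting the product formulas. Note also the interlocking structure you missed in calling \textbf{G2} routine: the case $y_1^{(l)}(1)\ne 0$ for $l=2r$ cannot be checked by ``direct substitution'' at the outset --- the paper explicitly defers it and obtains it as a byproduct of the \textbf{G1}/\textbf{G3} arguments (a root of $y_1^{(l)}$ at $1$ would be a common root with $y_0=x-1$, excluded by the same mechanism). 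So the three conditions are established in a mutually dependent order, not independently as your outline suggests. Your overall skeleton (reduce to genericity, use the explicit formulas, cite \cite{MV3} for $l=r$, apply Theorem \ref{thm:2.5}) matches the paper, but the central step is unproved and the specific inequality you propose to prove it with is false.
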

\begin{proof}
It is sufficient to show that $\bs y$ is generic with respect to $\bm \La,\bm z$.

Let us first consider {\bf G2}. For $\la\in\mathcal{P}^+$, {\bf G2} is equivalent to $y_1^{(l)}(1)\ne 0$ and $y_i^{(l)}(0)\ne 0$ if $\la_i\ne 0$.

If $l\lle r-1$, then the admissibility of $\bm l$ implies $\la_l>0$. To prove {\bf G2}, it suffices to show $c_j^{(l)}\ne 0$ if $\la_l\ne 0$ and $c_1^{(l)}\ne 1$, see  (\ref{3.4.12}). Note that if $\la_l>0$, then
\[
0<\frac{\la_i+\dots+\la_l+l-i}{\la_i+\dots+\la_l+l-i+1}<1
\]for all $i\in\{1,\dots,l\}$, therefore all $c_j^{(l)}\in (0,1)$.

If $l=r$, this is similar to the previous situation.

If $l=r+1$, the admissibility of $\bm l$ implies $\la_r\gge 2$. {\bf G2} is obviously true.

If $l\gge r+2$,  the admissibility of $\bm l$ implies $\la_{2r-l+1}>0$. One has $y_k^{(l)}(0)\ne 0$ since we have $a_k^{(l)}b_k^{(l)}\ne 0$. As for $y_1^{(l)}(1)\ne 0$ in the case $l=2r$, we delay the proof until after the case {\bf G1}.

Now, we consider {\bf G1}. Suppose $a_k^{(l)}=b_k^{(l)}$ for some $2r-l+1\lle k\lle r$. Observe that
\[
W(y_k^{(l)},\tl y_k^{(l)})=T_k^{(l)}y_{k-1}^{(l)}y_{k+1}^{(l)}.
\]
By {\bf G2}, $y_k^{(l)}$ and $T_k^{(l)}$ have no common roots. In addition if $l=2r$ and $k=1$, we have $y_{1}^{(l)}(1)=0$, then $a_{1}^{(l)}b_{1}^{(l)}=1$, while as above we have $a_{1}^{(l)}b_{1}^{(l)}\in(0,1)$. It follows that we must have
$a_k^{(l)}=a_{k+1}^{(l)}$ or $a_k^{(l)}=a_{k-1}^{(l)}$($a_k^{(l)}=c_{k-1}^{(l)}$, if $k=2r-l+1$).

We work in terms of $\theta$. We have $a_k=b_k=a_{k+1}$ or $a_k=b_k=a_{k-1}$ or $a_{2r-l+1}=c_{2r-l}$. If $a_k=b_k=a_{k+1}$, then substituting $x=c_k$ into (\ref{3.4.3}), we get
\beq\label{3.4.19}
-c_k(c_k-a_{k+1})=(c_k-b_{k+1})(c_k-c_{k-1})(A_{k}^{(2r-k-1)}+2).
\eeq
Solving (\ref{3.4.4}) and (\ref{3.4.19}) for $a_{k+1}=a_k=b_{k}$ and $b_{k+1}$ in terms of $c_k$, $c_{k-1}$ and $A_{k}^{(2r-k-1)}$, we obtain
\[
a_{k+1}b_{k+1}=(A_{k}^{(2r-k-1)}+2)(A_{k}^{(2r-k-1)}+1)c_{k-1}c_{k}\left(\frac{(A_{k}^{(2r-k-1)}+3)c_k-(A_{k}^{(2r-k-1)}+2)c_{k-1}}{(A_{k}^{(2r-k-1)}+1)c_k-A_{k}^{(2r-k-1)}c_{k-1}}\right)^2.
\]Comparing it with (\ref{3.4.7}) and canceling common factors, we obtain
\begin{align*}
&(A_{k}^{(2r-k-1)}+2)(A_{k}^{(2r-k-1)}+1)\frac{2\theta_1+\dots+2\theta_{r-1}+\theta_r+2r+1}{2\theta_1+\dots+2\theta_{r-1}+\theta_r+2r-1}\\
=&\prod_{i=k}^r\frac{\theta_1+\dots+\theta_i+i+1}{\theta_1+\dots+\theta_i+i}\prod_{i=k+2}^{r-1}\frac{\theta_1+\dots+\theta_{i-1}+2\theta_i+\dots+2\theta_{r-1}+\theta_r+2r-i+1}{\theta_1+\dots+\theta_{i-1}+2\theta_i+\dots+2\theta_{r-1}+\theta_r+2r-i}.
\end{align*}
Substituting $\theta_i$ in terms of $\la_j$, we have
\begin{align}\label{3.10.1}
&(\la_{2r-l+1}+\dots+\la_k+k+l-2r-1)(\la_{2r-l+1}+\dots+\la_k+k+l-2r)\nonumber\\&\times\frac{2\la_{2r-l+1}+\dots+2\la_{r-1}+\la_r+2l-2r-3}{2\la_{2r-l+1}+\dots+2\la_{r-1}+\la_r+2l-2r-1}\nonumber\\
=&\prod_{j=k}^{r-1}\frac{\la_{2r-l+1}+\dots+\la_j+2\la_{j+1}+\dots+2\la_{r-1}+\la_r+l-j-2}{\la_{2r-l+1}+\dots+\la_j+2\la_{j+1}+\dots+2\la_{r-1}+\la_r+l-j-1}\nonumber\\&\times \prod_{i=1}^{r-k-1}\frac{\la_{2r-l+1}+\dots+\la_{r-i}+l-r-i-1}{\la_{2r-l+1}+\dots+\la_{r-i}+l-r-i}.
\end{align}
By our assumption, we have $\la_{2r-l+1}\gge 1$, $k\gge 2r-l+1$ and $l\gge r+2$. It is easily seen that
\begin{align*}
&(\la_{2r-l+1}+\dots+\la_k+k+l-2r-1)(\la_{2r-l+1}+\dots+\la_k+k+l-2r)\\&\times\frac{2\la_{2r-l+1}+\dots+2\la_{r-1}+\la_r+2l-2r-3}{2\la_{2r-l+1}+\dots+2\la_{r-1}+\la_r+2l-2r-1}\gge 1\times 2\times \frac{3}{5}> 1.
\end{align*}
Therefore (\ref{3.10.1}) is impossible. Similarly, we can exclude $a_k^{(l)}=a_{k-1}^{(l)}$. As for $a_{2r-l+1}^{(l)}=c_{2r-l}^{(l)}$, by (\ref{3.4.4}), it is impossible since each fractional factor is strictly less than $1$.

Finally, we prove {\bf G3}. The nontrivial cases are $a_k^{(l)}=a_{k+1}^{(l)}$ and $a_{2r-l+1}^{(l)}=c_{2r-l}^{(l)}$ for $l\gge r+1$, where $k\gge 2r-l+1$.

If $a_k=a_{k+1}$, then by (\ref{3.4.3}) we have that $x-a_k$ divides $x(x-c_k)(x-b_k)$. As we already proved $a_k\ne b_k$ and $a_k\ne 0$, it follows that $a_k=c_k$. This again implies that $(x-a_k)^2$ divides $(x-a_{k+1})(x-b_{k+1})(x-c_{k-1})$ as $A_k^{(2r-k+1)}+2\ne 0$. If $b_{k+1}=a_k=a_{k+1}$, then we are done. If $a_k=c_{k-1}$, then $c_{k-1}=c_k$. It is impossible by the argument used in {\bf G2}.

If $a_k=c_{k-1}$, then by (\ref{3.4.3}) one has $x-a_k$ divides $x(x-c_k)(x-b_k)$.
Since $l$ is admissible, $a_k^{(l)}\ne0$. Then $a_k\ne b_k$ implies $c_{k-1}=c_k$. It is also a contradiction.

In particular, this shows that $y_1^{(l)}$ and $y_0^{(l)}$ have no common roots, i.e., $y_1^{(l)}(1)\ne 0$.
\end{proof}
\begin{cor}\label{thm sol B}
Suppose $\la\in\cP^+$. Then the Bethe ansatz equation \eqref{eq:bae} associated to $\bm \La,\bm z, \bm l$, where $\bm l$ is admissible, has exactly one solution. Explicitly, for $l\lle r-1$, the corresponding $r$-tuple $\bm y^{(l)}$ which represents the solution is described by \eqref{3.4.12}, for  $l=r$ by \eqref{3.4.16}, for $2r\gge l\gge r+1$ by \eqref{3.4.13}, \eqref{3.4.14} and \eqref{3.4.15}.\hfill$\square$
\end{cor}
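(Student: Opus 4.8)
The plan is to obtain Corollary \ref{thm sol B} as a direct synthesis of the two principal results already assembled in this section: the uniqueness statement of Section \ref{sec at most one} and the existence-with-formulas statement of Theorem \ref{thm B2 generic}. No new computation is needed; the corollary merely records that these two halves match for every admissible $\bm l$.

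First I would supply existence. By Theorem \ref{thm B2 generic}, for admissible $\bm l$ the explicit $r$-tuple $\bm y^{(l)}$ built in Section \ref{sec3.5.3} is generic with respect to $(\la,\omega_1)$, $\bm z$. Feeding this into the forward implication of Theorem \ref{thm:2.5} shows that $\bm y^{(l)}$ represents a genuine critical point associated to $\bm\La=(\la,\omega_1)$, $\bm z=(0,1)$, $\bm l$, hence a solution of \eqref{eq:bae}. The same tuple furnishes the explicit description claimed in the corollary: \eqref{3.4.12} for $l\lle r-1$, \eqref{3.4.16} for $l=r$, and \eqref{3.4.13}, \eqref{3.4.14}, \eqref{3.4.15} for $r+1\lle l\lle 2r$.

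Next I would invoke uniqueness. The Corollary of Section \ref{sec at most one} guarantees at most one solution for admissible $\bm l$ (and none otherwise). Its mechanism, which I would restate briefly, is that Lemma \ref{lem3.8} lets one carry any solution of length $l\ne r$ down the explicit chain of reproduction procedures of Lemmas \ref{lem3.4}--\ref{lem3.7} to length $0$, where $(1,\dots,1)$ is the unique tuple; since each reproduction step is its own inverse (as recalled after Theorem \ref{thm:2.5}), the length-$l$ solution is pinned down. The exceptional length $l=r$ is imported from Theorem 2 of \cite{MV3} through Remark \ref{rem type A}. Combining at-most-one with the existence above yields exactly one solution for each admissible $\bm l$, described by the stated formulas.

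I do not expect the corollary itself to present any obstacle, since it is purely an assembly step; all the genuine difficulty lies upstream, in its two inputs. These are the genericity verification inside Theorem \ref{thm B2 generic} --- especially excluding coincidences among the roots $a_k^{(l)}$, $b_k^{(l)}$, $c_k^{(l)}$ (conditions \textbf{G1} and \textbf{G3}) in the range $r+1\lle l\lle 2r$ --- and the recursion Lemmas \ref{lem3.4}--\ref{lem3.7}. At the level of the corollary the only point demanding attention is the careful separate treatment of $l=r$, which the reproduction chain skips and which must be handled via \cite{MV3}.
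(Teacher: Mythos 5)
Your proposal is correct and matches the paper's route exactly: the corollary is stated with no separate proof precisely because it is the assembly of the at-most-one-solution corollary of Section \ref{sec at most one} (via Lemma \ref{lem3.8} and the invertible reproduction chain, with $l=r$ imported from \cite{MV3}) together with the existence and explicit formulas supplied by Theorem \ref{thm B2 generic}. Your identification of where the real work lives --- the genericity checks \textbf{G1}, \textbf{G3} and the recursion lemmas --- is also consistent with the paper.
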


\subsection{Associated differential operators for type B}\label{sec oper}
Let $\bs y$ be an $r$-tuple of quasi-polynomials. Following \cite{MV2}, we introduce a linear differential operator $D(\bm y)$ of order $2r$ by the formula
\begin{align*}
D_\la(\bm y)=&\left(\pa-\ln'\left(\frac{T_1^2\dots T_{r-1}^2 T_r}{y_1}\right)\right)\left(\pa-\ln'\left(\frac{y_1T_1^2\dots T_{r-1}^2T_r}{y_2T_1}\right)\right)\\
&\times \left(\pa-\ln'\left(\frac{y_2T_1^2\dots T_{r-1}^2 T_r}{y_3T_1T_2}\right)\right)\dots \left(\pa-\ln'\left(\frac{y_{r-1}T_1\dots T_{r-1} T_r}{y_r}\right)\right)\\
&\times \left(\pa-\ln'\left(\frac{y_{r}T_1\dots T_{r-1} }{y_{r-1}}\right)\right)\left(\pa-\ln'\left(\frac{y_{r-1}T_1\dots T_{r-2} }{y_{r-2}}\right)\right)\dots\\
&\times (\pa-\ln'(y_1)),
\end{align*}
where $T_i$, $i=1,\dots,r$, are given by \eqref{eq T}.

If $\bs y$ is an $r$-tuple of polynomials representing a critical point associated to integral dominant weights $\La_1,\dots,\La_n$ and points $z_1,\dots,z_n$ of type $\mathrm{B}_r$, then by \cite{MV2}, the kernel of $D_\la(\bm y)$ is a self-dual space of polynomials. By \cite{MM} the coefficients of $D_\la(\bm y)$ are eigenvalues of higher Gaudin Hamiltonians acting on the Bethe vector related to $\bs y$.

For admissible $l$ and $\la\in \mathfrak{h}^*$, define $a_{\la}^l(1),\dots, a_\la^l(r)$ as the following.

For $l=0,\dots,r-1$, $i=1,\dots, l$, set $a_{\la}^l(i)=\la_i+\dots+\la_l+l+1-i$.
For $l=0,\dots,r-1$, $i=l+1,\dots, r$, set $a_{\la}^l(i)=0$.

For $l=r+1,\dots,2r$, set $k=2r-l$. Then for $i=1,\dots, k$, set $$a_{\la}^l(i)=\la_i+\dots+\la_k+2\la_{k+1}+\dots+2\la_{r-1}+\la_r+2r-k-i$$ and for  $i=k+1,\dots, r$, set $a_{\la}^l(i)=2\la_{k+1}+\dots+2\la_{r-1}+\la_r+2r-2k-1$.

\begin{prop}\label{diff B} Let the r-tuple $\bm y$ represent the solution of the Bethe ansatz equation \eqref{eq:bae} associated to $\bm\La$, $\bm z$ and admissible $\bm l$, where $\la\in\cP^+$ and $l\ne r$. Then $D_\la(\bm y)=D_\la(x^{a_{\la}^l(1)},\dots,x^{a_{\la}^l(r)})$.
\end{prop}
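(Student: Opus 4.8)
The plan is to transport the factorized form of $D_\la(\bm y)$ along the reproduction chain of Lemma~\ref{lem3.8} that joins $\bm y^{(l)}$ to the trivial tuple $(1,\dots,1)$, and then to match the outcome against the (equally explicit) monomial operator on the right. The only analytic input needed is the elementary factorization identity: for functions $f,g$,
\[
\left(\pa-\ln'\frac{W(f,g)}{f}\right)(\pa-\ln' f)=\left(\pa-\ln'\frac{W(f,g)}{g}\right)(\pa-\ln' g),
\]
both sides being the monic second order operator with kernel $\operatorname{span}\{f,g\}$.

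First I would determine how $D$ changes under one reproduction step in the $j$-th direction, $(\la,\bm y)\mapsto(s_j\cdot\la,\tl{\bm y}_j)$. In the product defining $D_\la(\bm y)$ the polynomial $y_j$ occurs in precisely two pairs of adjacent factors, one in each half, and within each pair the two arguments of $\ln'$ have the form $\varphi/y_j$ and $y_j\psi$ with $\varphi,\psi$ free of $y_j$. By the reproduction relation of Theorem~\ref{thm:2.5}, $W\!\left(y_j,x^{\langle\la+\rho,\alpha_j^{\vee}\rangle}\tl y_j\right)=T_j\prod_{i\ne j}y_i^{-\langle\alpha_i,\alpha_j^{\vee}\rangle}$, the kernel of each such pair equals $\phi\cdot\operatorname{span}\{y_j,\,x^{\langle\la+\rho,\alpha_j^{\vee}\rangle}\tl y_j\}$ for an explicit prefactor $\phi$ built from the $T_i$ and the neighbouring $y_i$. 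Reproduction interchanges the two basis vectors and, crucially, rescales $\phi$ only by a power of $x$, whose exponent is read off from how each monomial $T_i$ shifts under $\la\mapsto s_j\cdot\la$ (this is controlled by Lemma~\ref{lem3.10}; the $(x-1)$-powers, coming from the fixed weight $\omega_1$ at $z_2=1$, never move). Carrying this out, one finds that for $2\le j\le r$ the prefactors in both pairs are unchanged, so $D$ is invariant, $D_{s_j\cdot\la}(\tl{\bm y}_j)=D_\la(\bm y)$, whereas for $j=1$ the topmost factor forces an overall conjugation,
\[
D_{s_1\cdot\la}(\tl{\bm y}_1)=x^{-\langle\la+\rho,\alpha_1^{\vee}\rangle}\,D_\la(\bm y)\,x^{\langle\la+\rho,\alpha_1^{\vee}\rangle}.
\]

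Composing these steps along the chain of Lemma~\ref{lem3.8} (in which direction $1$ occurs only at the very end, except that it occurs twice when $l=2r$), all intermediate steps are neutral and only the direction-$1$ step(s) act, giving
\[
D_\la(\bm y^{(l)})=x^{c}\,D_\theta(1,\dots,1)\,x^{-c},
\]
where $c$ is the sum of the exponents $\langle\nu+\rho,\alpha_1^{\vee}\rangle$ at the intermediate weights $\nu$ preceding each direction-$1$ step. A short computation with Lemmas~\ref{lem3.10} and~\ref{lem3.9} evaluates this sum and identifies $c=a_{\la}^{l}(1)$. Finally I would compute the right-hand operator directly: since all its entries are monomials, $D_\la(x^{a_{\la}^{l}(1)},\dots,x^{a_{\la}^{l}(r)})$ is again a product of factors $\pa-p/x-q/(x-1)$, and comparing it factorwise with $D_\theta(1,\dots,1)$ one checks that every factor is shifted by the same amount $a_{\la}^{l}(1)/x$, i.e.
\[
D_\la\!\left(x^{a_{\la}^{l}(1)},\dots,x^{a_{\la}^{l}(r)}\right)=x^{a_{\la}^{l}(1)}\,D_\theta(1,\dots,1)\,x^{-a_{\la}^{l}(1)}.
\]
The uniformity of this shift is the content of a family of scalar identities among the partial sums of the $\theta_i$ (equivalently the $\la_i$), which reduce to Lemma~\ref{lem3.10} and the definition of $a_{\la}^{l}$. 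Comparing the last two displays yields the proposition.

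I expect the single-step transformation rule to be the main obstacle: one must verify that the $T_i$ rescale exactly so as to recombine through the factorization identity, and in particular account for the asymmetry whereby direction $1$ alone produces the conjugating power of $x$ (tracking the exponent of $D$ at $x=0$) while the directions $2\le j\le r$ are neutral. The short-root direction $j=r$ and the two direction-$1$ steps arising at $l=2r$ require separate but completely parallel checks; the consistency requirement that the accumulated exponent equal $a_{\la}^{l}(1)$ provides a useful safeguard.
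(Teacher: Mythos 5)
Your argument is correct, but it takes a genuinely different route from the paper. The paper's entire proof is a reduction to type A: the symmetric tuple $(y_1,\dots,y_{r-1},y_r,y_{r-1},\dots,y_1)$ represents a critical point of type $\mathrm{A}_{2r-1}$, the type $\mathrm{B}_r$ reproduction in direction $i$ is the composition of type $\mathrm{A}_{2r-1}$ reproductions in directions $i$ and $2r-i$ (direction $r$ alone for $i=r$), and the invariance of the associated differential operator along the resulting chain is quoted wholesale from Lemma 4.2 of \cite{MV4}. You instead re-derive the one-step transformation law directly in the presentation $D_\la(\bm y)$ used here, in which the tuple stays polynomial and the weight moves by the shifted Weyl action; I checked your law and it is right: for $j=2,\dots,r$ the factors containing $y_j$ recombine through the Wronskian identity, the shifts of the $T_i$ caused by $\la\mapsto s_j\cdot\la$ cancel exactly against the quasi-exponent $x^{\langle \la+\rho,\alpha_j^\vee\rangle}$, and $D$ is literally unchanged, while for $j=1$ one gets $D_{s_1\cdot\la}(\tl{\bm y}_1)=x^{-\la_1-1}D_\la(\bm y)\,x^{\la_1+1}$. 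Composing along the chain of Lemma~\ref{lem3.8} and comparing $D_\la(x^{a_\la^l(1)},\dots,x^{a_\la^l(r)})$ with $D_\theta(1,\dots,1)$ factor by factor then closes the argument; the scalar identities you need do hold, including at $l=2r$ where the two direction-1 contributions $(\la_1+1)+(-\theta_1-1)$ sum to $a_\la^{2r}(1)$. Your route buys self-containedness (it reproves exactly the special case of Lemma 4.2 of \cite{MV4} that the paper cites, and makes the conjugation exponent, i.e.\ the exponent of $\ker D_\la(\bm y)$ at $x=0$, completely explicit), at the cost of the case bookkeeping; the paper buys three lines at the cost of outsourcing the key invariance.

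Two corrections to your sketch. First, at $j=1$ the conjugation is forced by the \emph{bottom} factor $\pa-\ln'(y_1)$, which has no $T_i$'s below it to absorb the power of $x$; the topmost pair recombines neutrally, exactly like the interior ones, so attributing the conjugation to the top factor is backwards (the middle factors each acquire the uniform shift $x^{-\la_1-1}$, and the two end pairs match it after recombination). Second, when evaluating $\theta_1$ for $l=2r$, do not use the last display of Lemma~\ref{lem3.10} at its edge case $k=2r-1$: as printed it yields $\theta_1=-2\la_1-2\la_2-\dots-2\la_{r-1}-\la_r-2r$, which is off by $\la_1+1$ (compare the $\mathrm{B}_2$ example of Section~\ref{sec3.3}, where $\theta_1=-\la_1-\la_2-3$). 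The correct value $\theta_1=-\la_1-2\la_2-\dots-2\la_{r-1}-\la_r-2r+1$ follows from the final display of Lemma~\ref{lem3.9}, and with it your safeguard check $c=a_\la^{2r}(1)$ indeed passes; with the misprinted value it would spuriously fail.
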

\begin{proof} The $(2r-1)$-tuple $(y_1,\dots,y_{r-1},y_r,y_{r-1},\dots, y_1)$ represents a critical point of type $\mathrm{A}_{2r-1}$. Then the reproduction procedure in direction $i$ of type $\mathrm{B}_r$ corresponds to a composition of reproduction procedures of type $\mathrm{A}_{2r-1}$ in directions $i$ and $2r-i$ for $i=1,\dots,r-1$, and to reproduction procedure of type $\mathrm{A}_{2r-1}$ in direction $r$ for $i=r$, see \cite{MV2}, \cite{MV4}.
Proposition follows from Lemma 4.2 in \cite{MV4}.
\end{proof}

\section{Completeness of Bethe ansatz for type B}\label{sec B generic}
In this section we continue to study the case of $\g={\mathfrak{so}}(2r+1)$.
The main result of the section is Theorem \ref{thm B generic}.

\subsection{Completeness of Bethe ansatz for \texorpdfstring{$V_\lambda\otimes V_{\omega_1}$}{Lg} }\label{sec3.6} Let $\la\in\mathcal P^+$.
Consider the tensor product of a finite-dimensional irreducible module with highest weight $\la$, $V_\lambda$, and the vector representation $V_{\omega_1}$.

Recall that the value of the weight function $\omega(z_1,z_2,\bm t)$ at a solution of the Bethe ansatz equations (\ref{eq:bae}) is called the Bethe vector.
We have the following result, which is usually referred to as completeness of Bethe ansatz.
\begin{thm}\label{thm3.14}
The set of Bethe vectors $\omega(z_1,z_2,\bm t)$, where $\bm t$ runs over the solutions to the Bethe ansatz equations \eqref{eq:bae} with admissible length $l$, forms a basis of ${\mathrm{Sing}}~(V_\lambda\otimes V_{\omega_1})$.
\end{thm}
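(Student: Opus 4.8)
The plan is to establish that the Bethe vectors form a basis of $\mathrm{Sing}~(V_\la\otimes V_{\omega_1})$ by combining a dimension count with the linear independence of eigenvectors. First I would invoke the decomposition \eqref{eq:dec of B}, which shows that $V_\la\otimes V_{\omega_1}$ decomposes into irreducible summands each with multiplicity one, indexed precisely by the admissible $r$-tuples $\bs l$. Consequently $\dim\,\mathrm{Sing}~(V_\la\otimes V_{\omega_1})$ equals the number of admissible lengths $l\in\{0,1,\dots,2r\}$, since each admissible $\bs l$ contributes exactly one singular vector (the highest weight vector of the corresponding summand $V_{\la+\omega_1-\alpha(\bs l)}$). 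The task is therefore to produce one nonzero Bethe vector in each such weight space $\mathrm{Sing}~(V_\la\otimes V_{\omega_1})[\la+\omega_1-\alpha(\bs l)]$ and to check they are linearly independent.

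Next I would supply exactly those Bethe vectors. By Corollary \ref{thm sol B}, for each admissible $\bs l$ the Bethe ansatz equation \eqref{eq:bae} associated to $\bs\La=(\la,\omega_1)$, $\bs z=(0,1)$ has exactly one solution $\bm t$, represented by the explicit $r$-tuple $\bm y^{(l)}$. For the special case $l=r$ the solution comes from \cite{MV3} via Remark \ref{rem type A}, and the degenerate weight $\mu=0$ in type B (when $\la_r=0$ and the summand $V_{\la-\alpha_1-\dots-\alpha_r}$ is absent) is exactly the non-admissible case carrying no solution, so it need not be treated here. Evaluating the weight function \eqref{wtf} at each such $\bm t$ gives a Bethe vector $\omega(z_1,z_2,\bm t)$, which by Theorem \ref{thm:bvnonzero} is nonzero and by Theorem \ref{thm:bveigen} lies in $\mathrm{Sing}~(V_\la\otimes V_{\omega_1})[\la+\omega_1-\alpha(\bs l)]$. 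Thus I obtain one nonzero singular vector per admissible length.

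The linear independence is then immediate: Bethe vectors associated to distinct admissible $\bs l$ have distinct weights $\la+\omega_1-\alpha(\bs l)$ (since by \eqref{admit} distinct admissible tuples have distinct lengths, hence land in distinct weight subspaces), so they lie in different summands of the multiplicity-free decomposition and are automatically linearly independent. Counting, I have exhibited exactly one nonzero vector in each of the $\dim\,\mathrm{Sing}~(V_\la\otimes V_{\omega_1})$ one-dimensional weight spaces, so these vectors span and the set is a basis.

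The genuinely hard work has already been carried out earlier in the paper rather than in this theorem itself: the existence and uniqueness of the critical points (the full recursion via Lemmas \ref{lem3.4}--\ref{lem3.7}, the ``at most one solution'' corollary, and the explicit formulas), together with the verification in Theorem \ref{thm B2 generic} that the explicit $\bm y^{(l)}$ are \emph{generic} and hence genuinely represent critical points. Given all of that, the present theorem is essentially a bookkeeping step. The only point requiring care is to match each admissible summand of \eqref{eq:dec of B} with the correct admissible $\bs l$ and to confirm that \emph{every} summand is realized, i.e.\ that the admissibility condition on $l$ coincides exactly with the non-vanishing condition on the corresponding summand in \eqref{eq:dec of B}; this compatibility is already recorded in Section \ref{sec2.6}, so the main obstacle is merely to state the correspondence cleanly rather than to prove anything new.
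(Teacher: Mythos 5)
Your proposal is correct and takes essentially the same approach as the paper, whose proof likewise combines the multiplicity-one decomposition \eqref{eq:dec of B}, the existence of exactly one solution per admissible length from Corollary \ref{thm sol B}, and Theorems \ref{thm:bvnonzero} and \ref{thm:bveigen}. The detail you add (the dimension count and linear independence via distinct weights) merely unpacks what the paper's three-sentence proof leaves implicit.
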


\begin{proof}
All multiplicities in the decomposition of $V_\lambda\otimes V_{\omega_1}$ are 1. By Corollary \ref{thm sol B} for each admissible length $l$ we have a solution of the Bethe ansatz equation. The theorem follows from Theorems \ref{thm:bvnonzero} and \ref{thm:bveigen}.
\end{proof}

\subsection{Simple Spectrum of Gaudin Hamiltonians for \texorpdfstring{$V_\lambda\otimes V_{\omega_1}$}{Lg} }

We have the following standard fact.
\begin{lem}\label{hum}
Let $\mu,\nu\in\mathcal P^+$. If $\mu>\nu$ then $(\mu+\rho,\mu+\rho)>(\nu+\rho,\nu+\rho)$.
\end{lem}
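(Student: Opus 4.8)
The plan is to prove the strict monotonicity by expanding the difference of the two quadratic expressions and showing it is strictly positive. First I would write $(\mu+\rho,\mu+\rho)-(\nu+\rho,\nu+\rho)=(\mu-\nu,\mu+\nu+2\rho)$, using bilinearity of the invariant form. Setting $\delta=\mu-\nu$, the hypothesis $\mu>\nu$ means $\delta=\sum_{i=1}^r a_i\alpha_i$ with all $a_i\in\Z_{\gge 0}$ and $\delta\ne 0$, so at least one $a_i>0$.

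Next I would rewrite the second factor as $\mu+\nu+2\rho=\delta+2(\nu+\rho)$, giving
\[
(\mu+\rho,\mu+\rho)-(\nu+\rho,\nu+\rho)=(\delta,\delta)+2(\delta,\nu+\rho).
\]
The key is that both terms are nonnegative and their sum is strictly positive. For the first term, $(\delta,\delta)\gge 0$ since the form is positive definite on the real span of the roots; it equals zero only if $\delta=0$, which is excluded. For the second term, I would expand $(\delta,\nu+\rho)=\sum_i a_i(\alpha_i,\nu+\rho)$ and use that $(\alpha_i,\nu+\rho)=\tfrac{(\alpha_i,\alpha_i)}{2}\langle\nu+\rho,\alpha_i^\vee\rangle=\tfrac{(\alpha_i,\alpha_i)}{2}(\nu_i+1)$, which is strictly positive because $\nu\in\mathcal P^+$ forces $\nu_i\gge 0$ and $(\alpha_i,\alpha_i)>0$. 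Since each $a_i\gge 0$, the whole sum $(\delta,\nu+\rho)\gge 0$.

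To conclude strict positivity I would argue as follows: since $\delta\ne 0$, we have $(\delta,\delta)>0$ by positive-definiteness of the form on the root lattice, and this alone forces the left-hand side to exceed $2(\delta,\nu+\rho)\gge 0$, hence to be strictly positive. This gives $(\mu+\rho,\mu+\rho)>(\nu+\rho,\nu+\rho)$, as desired.

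The only point requiring care — and the closest thing to an obstacle — is justifying that the invariant form $(\,,\,)$ is positive definite on the real span of the simple roots. This is a standard fact for the normalization fixed in Section \ref{sec start}, where $(\alpha_i,\alpha_j)=d_i a_{i,j}$ with $D=\mathrm{diag}\{d_i\}$ chosen so that $B=DA$ is symmetric; the resulting Gram matrix is symmetric positive definite since $\g$ is a simple (hence finite-type) Lie algebra. With this in hand every step above is elementary, and no hard computation is involved.
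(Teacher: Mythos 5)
Your proof is correct. The paper itself does not spell out an argument; it simply defers to the proof of Lemma 13.2B in Humphreys, which runs as follows: write the difference as $(\mu-\nu,\mu+\nu+2\rho)=\sum_i a_i(\alpha_i,\mu+\nu+2\rho)$ and observe that $\mu+\nu+2\rho$ is strictly dominant, so each $(\alpha_i,\mu+\nu+2\rho)=\tfrac{(\alpha_i,\alpha_i)}{2}(\mu_i+\nu_i+2)>0$, whence the sum is strictly positive once some $a_i>0$. You take the same first step but then split the second factor as $\delta+2(\nu+\rho)$, getting $(\delta,\delta)+2(\delta,\nu+\rho)$, and you source the strict inequality from positive definiteness of the form on the root lattice rather than from strict dominance. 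Both routes are elementary and a few lines long; the trade-off is that your version needs the (standard, and correctly justified) fact that the symmetrized Cartan matrix of finite type is positive definite, while Humphreys's needs only the dominance computation. A small dividend of your decomposition, worth noticing, is that dominance of $\mu$ is never used: your argument proves the lemma for any $\mu$ with $\mu>\nu$ and $\nu\in\mathcal P^+$. If you wanted to avoid positive definiteness altogether, you could get strictness directly from your own computation by applying it to the undecomposed factor: $(\delta,\mu+\nu+2\rho)=\sum_i a_i\tfrac{(\alpha_i,\alpha_i)}{2}(\mu_i+\nu_i+2)>0$, which is exactly the cited proof.
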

\begin{proof}
The lemma follows from the proof of Lemma 13.2B in \cite{H}.
\end{proof}

\begin{prop}\label{prop3.18}
Let $\omega,\omega'\in V_\lambda\otimes V_{\omega_1}$ be Bethe vectors corresponding to solutions
to the Bethe ansatz equations of two different lengths. Then $\omega,\omega'$ are eigenvectors of the Gaudin Hamiltonian $\cH:=\cH_1=-\cH_2$
with distinct eigenvalues.
\end{prop}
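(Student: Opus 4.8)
The plan is to compute the eigenvalue of the Gaudin Hamiltonian $\cH = \cH_1(\bm z)$ on a Bethe vector in $\mathrm{Sing}~(V_\la\otimes V_{\omega_1})[\mu]$ and show it is a strictly monotone function of the length $l$. Since $\bm z = (0,1)$ and $\cH_1 = -\cH_2 = \Omega^{(1,2)}$ (up to the sign coming from $z_1 - z_2 = -1$), the action of $\cH$ is governed by the Casimir. Indeed, from $\Delta(\Omega_0) = \Omega_0\otimes 1 + 1\otimes\Omega_0 + 2\Omega$ one gets $2\Omega^{(1,2)} = \Omega_0^{(1,2)} - \Omega_0^{(1)} - \Omega_0^{(2)}$ acting on $V_\la\otimes V_{\omega_1}$, where $\Omega_0^{(1,2)}$ denotes the total Casimir. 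On the irreducible summand $V_{\la+\omega_1-\alpha(\bm l)}$ of weight $\mu = \la+\omega_1-\alpha(\bm l)$, Lemma \ref{cas eigen} gives that $\Omega_0$ acts by $(\mu+\rho,\mu+\rho)-(\rho,\rho)$. Therefore the eigenvalue of $\cH$ on the Bethe vector attached to length $l$ is, up to an overall sign and the $l$-independent terms coming from $\Omega_0^{(1)}$ and $\Omega_0^{(2)}$, a constant multiple of $(\mu+\rho,\mu+\rho)$.

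First I would make this precise: each admissible length $l$ corresponds to a distinct summand $V_{\mu^{(l)}}$ in the decomposition \eqref{eq:dec of B}, with $\mu^{(l)} = \la+\omega_1-\alpha(\bm l^{(l)})$, and the Bethe vector of length $l$ lies in $\mathrm{Sing}~(V_\la\otimes V_{\omega_1})[\mu^{(l)}]$. By Theorem \ref{thm:bveigen} the Bethe vector is an eigenvector of $\cH$, and by the Casimir computation above its eigenvalue equals $\tfrac12\big((\mu^{(l)}+\rho,\mu^{(l)}+\rho) - (\rho,\rho)\big) + C$, where $C$ collects the $l$-independent contributions of $\Omega_0^{(1)}$ and $\Omega_0^{(2)}$ (and the sign fixed by the convention $\cH = \Omega^{(1,2)}$). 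Thus two Bethe vectors of different lengths $l \ne l'$ have equal $\cH$-eigenvalues if and only if $(\mu^{(l)}+\rho,\mu^{(l)}+\rho) = (\mu^{(l')}+\rho,\mu^{(l')}+\rho)$.

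The remaining task is to show the values $(\mu^{(l)}+\rho,\mu^{(l)}+\rho)$ are pairwise distinct across admissible lengths. The clean way is to invoke Lemma \ref{hum}: since all the $\mu^{(l)}$ appearing as highest weights of summands of $V_\la\otimes V_{\omega_1}$ are dominant integral, it suffices to show that distinct admissible $\mu^{(l)}$ are comparable in the dominance order, i.e.\ that the set $\{\mu^{(l)}\}$ is totally ordered by $>$. From the explicit form of the decomposition \eqref{eq:dec of B}, the weights differ by removing successive simple roots $\alpha_1, \alpha_2, \dots$ and then doubling, so $\mu^{(0)} > \mu^{(1)} > \cdots > \mu^{(r)} > \mu^{(r+1)} > \cdots > \mu^{(2r)}$ follows directly by reading off $\mu^{(l)} - \mu^{(l')} \in \sum_i \Z_{\gge0}\alpha_i$ for $l < l'$ from \eqref{admit}. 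Given this chain, Lemma \ref{hum} yields $(\mu^{(l)}+\rho,\mu^{(l)}+\rho) > (\mu^{(l')}+\rho,\mu^{(l')}+\rho)$ whenever $l < l'$, and in particular the eigenvalues are distinct.

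The step I expect to be the only mildly delicate point is verifying that the family $\{\mu^{(l)}\}$ really is totally ordered by dominance, since the two branches in \eqref{admit} (the "$k$ ones" branch and the "$k$ ones then twos" branch) must be compared across the transition at $l=r$; here one checks directly from \eqref{eq:dec of B} that $\mu^{(l)} - \mu^{(l+1)}$ is a nonnegative integer combination of simple roots at each consecutive step, so the whole list descends. Once the chain of inequalities $\mu^{(0)} > \cdots > \mu^{(2r)}$ is established, Lemma \ref{hum} closes the argument immediately, so no genuine computation with the explicit Bethe vectors is needed---everything reduces to the Casimir eigenvalue and the dominance order.
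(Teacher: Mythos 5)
Your proposal is correct and follows essentially the same route as the paper: reduce $\cH$ to the Casimir via $\Omega^{(1,2)}=\frac12(\Delta\Omega_0-\Omega_0\otimes 1-1\otimes\Omega_0)$, identify each Bethe vector as the highest weight vector of a summand in \eqref{eq:dec of B} via Theorem \ref{thm:bveigen}, and separate eigenvalues with Lemmas \ref{cas eigen} and \ref{hum}. The only difference is that you spell out the dominance-order chain $\mu^{(0)}>\mu^{(1)}>\dots>\mu^{(2r)}$ needed to apply Lemma \ref{hum}, a comparability check the paper leaves implicit; your verification of it from \eqref{admit} is correct.
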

\begin{proof}
Recall the relation
\[
\Omega^{(1,2)}=\frac{1}{2}\left(\Delta\Omega_0-1\otimes\Omega_0-\Omega_0\otimes 1\right).
\]Since $\Omega_0$ acts as a constant in
any irreducible module, $1\otimes\Omega_0+\Omega_0\otimes 1$ acts as a constant on $V_\lambda\otimes V_{\omega_1}$. It remains to consider the spectrum of the diagonal action  of $\Delta\Omega_0$. By Theorem \ref{thm:bveigen}, $\omega$ and $\omega'$ are highest weight vectors of
two non-isomorphic irreducible submodules of $V_\lambda\otimes V_{\omega_1}$. By Lemmas \ref{cas eigen} and \ref{hum} the values of $\Delta\Omega_0$ on $\omega$ and $\omega'$ are different.
\end{proof}

\subsection{The generic case}
We use the following well-known lemma from algebraic geometry.
\begin{lem}\label{lem5.1}Let $n\in \mathbb Z_{\gge 1}$ and suppose $f_k^{(\epsilon)}(x_1,\dots,x_l)=0$, $k=1,\dots,n$, is a system of $n$ algebraic
equations for $l$ complex variables $x_1,\dots,x_l$, depending on a complex parameter $\epsilon$ algebraically.
Let $(x_1^{(0)},\dots,x_l^{(0)})$ be an isolated solution with $\epsilon=0$. Then for sufficiently small $\epsilon$, there exists an
isolated solution $(x_1^{(\epsilon)},\dots,x_l^{(\epsilon)})$, depending algebraically on $\epsilon$, such that
\[x_k^{(\epsilon)}=x_k^{(0)}+o(1).\]\hfill$\square$
\end{lem}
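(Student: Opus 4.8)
The statement is a form of the principle that an isolated solution of a (square) system of algebraic equations cannot simply disappear under a small algebraic deformation of the coefficients. The plan is to pass to the total solution variety and control its projection to the parameter line. First I would form the algebraic set
\[
Z=\{(\bm x,\epsilon)\in\C^l\times\C \ :\ f_k^{(\epsilon)}(\bm x)=0,\ k=1,\dots,n\}
\]
and consider the projection $\pi\colon Z\to\C$, $(\bm x,\epsilon)\mapsto\epsilon$. The hypothesis that $(\bm x^{(0)},0)$ is an isolated solution at $\epsilon=0$ means precisely that $(\bm x^{(0)},0)$ is an isolated point of the fibre $\pi^{-1}(0)$, i.e.\ $\pi$ has fibre dimension $0$ there.

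The heart of the argument is to produce, for all small $\epsilon$, a point of $Z$ near $(\bm x^{(0)},0)$. In the relevant case $n=l$ (the Bethe ansatz system \eqref{eq:bae} has exactly $l$ equations in the $l$ variables $\bm t$), $Z$ is cut out in $\C^{l+1}$ by $l$ equations, so by Krull's principal ideal theorem every irreducible component of $Z$ has dimension at least $1$. Let $W$ be an irreducible component through $(\bm x^{(0)},0)$. By the theorem on the dimension of fibres applied to $\pi|_W$, one has $\dim_{(\bm x^{(0)},0)}\pi^{-1}(0)\geq \dim W-\dim\overline{\pi(W)}$; since the left side is $0$ and $\dim\overline{\pi(W)}\leq 1$, we get $\dim W\leq 1$, hence $\dim W=1$. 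As $W$ cannot lie inside the single fibre $\pi^{-1}(0)$ (that would contradict isolatedness of $\bm x^{(0)}$ in $\pi^{-1}(0)$), the image $\overline{\pi(W)}$ is all of $\C$ and $\pi|_W$ is dominant with finite generic fibres; thus $W$ is a curve dominating the $\epsilon$-line.

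It then remains to read off the three conclusions from this curve. By the Newton--Puiseux theorem, on a punctured disc $0<|\epsilon|<\delta$ the branches of $W$ through $(\bm x^{(0)},0)$ are given by convergent Puiseux series $\bm x(\epsilon)=\bm x^{(0)}+\sum_{m\geq 1}\bm c_m\,\epsilon^{m/q}$; any such branch furnishes a solution $\bm x^{(\epsilon)}$ depending algebraically on $\epsilon$ and satisfying $\bm x^{(\epsilon)}=\bm x^{(0)}+o(1)$ as $\epsilon\to 0$. Isolatedness of $\bm x^{(\epsilon)}$ for small $\epsilon$ follows from upper semicontinuity of fibre dimension: the locus in $Z$ where $\pi$ has positive fibre dimension is closed and misses $(\bm x^{(0)},0)$, hence misses a whole neighbourhood, so for small $\epsilon$ the constructed point lies in a $0$-dimensional fibre.

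I expect the existence step (second paragraph) to be the main obstacle and the genuine content of the lemma; the dimension count rests essentially on the system being balanced ($n=l$, or more generally on the solution variety having the expected codimension), and this cannot be dropped, since appending a redundant equation such as $f_{n+1}^{(\epsilon)}\equiv\epsilon$ keeps $(\bm x^{(0)},0)$ isolated at $\epsilon=0$ while destroying all solutions for $\epsilon\neq 0$. An alternative route to existence, avoiding Puiseux series, is degree theory: for the square map $F_\epsilon=(f_1^{(\epsilon)},\dots,f_l^{(\epsilon)})$ the local topological degree of $F_0$ at its isolated zero $\bm x^{(0)}$ is a positive integer, $F_\epsilon$ has no zeros on a small sphere $\partial B$ about $\bm x^{(0)}$ for $|\epsilon|$ small, and homotopy invariance of the degree then forces a zero of $F_\epsilon$ inside $B$; shrinking $B$ yields $\bm x^{(\epsilon)}\to\bm x^{(0)}$.
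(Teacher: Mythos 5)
The paper offers no proof to compare against: Lemma \ref{lem5.1} is quoted as a ``well-known lemma from algebraic geometry'' and stated with a closing square, so your argument stands on its own, and it is correct (at the customary level of detail) for the balanced case $n=l$, which is the case the paper actually uses. The chain of steps is sound: Krull's height theorem gives $\dim W\gge 1$ for any component $W$ of the incidence variety $Z$ through $(\bm x^{(0)},0)$; the fibre-dimension theorem applied to $\pi|_W$, together with the isolatedness hypothesis (which makes the local fibre dimension at the point zero, since the fibre of $\pi|_W$ is contained in the full fibre), forces $\dim W=1$ with $\pi|_W$ dominant; no local branch of the curve $W$ at $(\bm x^{(0)},0)$ can lie in $\{\epsilon=0\}$, again by isolatedness, so Newton--Puiseux yields $\bm x^{(\epsilon)}=\bm x^{(0)}+o(1)$ depending algebraically on $\epsilon$ (single-valued only after a choice of branch of $\epsilon^{1/q}$, which is exactly what ``algebraically'' permits); and Chevalley's upper semicontinuity of $z\mapsto\dim_z\pi^{-1}(\pi(z))$ gives isolatedness of the deformed solutions. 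Your counterexample $f^{(\epsilon)}_{n+1}\equiv\epsilon$ is a genuinely valuable observation: the lemma as printed, with an arbitrary number $n$ of equations, is literally false, and the argument needs the square hypothesis $n=l$ (or that $Z$ has the expected codimension); this hypothesis does hold in the paper's application, where the Bethe system \eqref{5.3a} consists, after clearing denominators, of exactly $l$ polynomial equations in the $l$ variables $t^{(b)}_{k,j}$. Two cosmetic remarks: if the coefficients are algebraic rather than polynomial functions of $\epsilon$, pull back to a finite cover of the $\epsilon$-line before invoking Krull, which leaves the dimension count unchanged; and your degree-theoretic alternative, as you yourself note, delivers existence and continuity but not the algebraic dependence or the isolatedness, so the Puiseux route is the one that proves the full statement.
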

Our main result is the following theorem.
\begin{thm}\label{thm B generic}
Let $\g=\mathfrak{so}(2r+1)$, $\la\in\cP^+$ and $N\in \Z_{\gge 0}$. For a generic $(N+1)$-tuple of distinct complex numbers $\bm z=(z_0,z_1,\dots,z_N)$, the Gaudin Hamiltonians $(\cH_0,\cH_1,\dots,\cH_N)$ acting in ${\rm Sing}\left(V_\la\otimes V_{\omega_1}^{\otimes N}\right)$ are diagonalizable and have simple joint spectrum. Moreover, for generic $\bm z$ there exists a set of solutions $\{\bm t_i,~i\in I\}$ of the Bethe ansatz equation \eqref{eq:bae} such that the corresponding Bethe vectors $\{\omega(\bm z,\bm t_i),~i\in I\}$ form a basis of ${\rm Sing}\left(V_\la\otimes V_{\omega_1}^{\otimes N}\right)$.
\end{thm}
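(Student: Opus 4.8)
The plan is to reduce the $N$-point problem to the $2$-point case already solved in Corollary \ref{thm sol B} by a limiting argument, following the strategy announced in the introduction and in \cite{MV1}. The idea is to let the points $z_1,\dots,z_N$ collide at a common location while keeping them distinct, so that in the leading order the Gaudin model associated to $V_\la\otimes V_{\omega_1}^{\otimes N}$ degenerates into a nested family of $2$-point problems, each of which involves one arbitrary module tensored with a single $V_{\omega_1}$. Concretely, I would first observe that by the commutation of the Gaudin Hamiltonians with the diagonal $\g$-action it suffices to work inside each isotypic component; then, using associativity of the tensor product, I would decompose $V_{\omega_1}^{\otimes N}$ and $V_\la\otimes V_{\omega_1}^{\otimes N}$ iteratively, applying the $2$-point decomposition \eqref{eq:dec of B} one factor at a time.

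The key steps, in order, are as follows. First, I would set up a one-parameter family $\bm z(\epsilon)$ of distinct points degenerating so that the $N$ copies of $V_{\omega_1}$ merge onto $V_\la$ successively at different rates. Second, in the limit $\epsilon\to 0$ the Bethe ansatz equations \eqref{eq:bae} factor into independent systems, each governing the fusion of one $V_{\omega_1}$ with the accumulated module; for each such $2$-point subproblem Corollary \ref{thm sol B} supplies an explicit isolated solution, and since \eqref{eq:dec of B} has all multiplicities equal to $1$, stitching these together produces exactly as many isolated solutions of the degenerate system as $\dim\mathrm{Sing}(V_\la\otimes V_{\omega_1}^{\otimes N})[\mu]$ in every weight $\mu$. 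Third, I would invoke Lemma \ref{lem5.1} to deform each isolated solution at $\epsilon=0$ to an isolated solution for small $\epsilon\ne 0$, hence (for generic $\bm z$) to genuine solutions of the Bethe ansatz equation with distinct points. Fourth, by Theorem \ref{thm:bvnonzero} the associated Bethe vectors are nonzero, and by Theorem \ref{thm:bveigen} they are singular eigenvectors of the Gaudin Hamiltonians; a counting argument then shows that their number matches $\dim\mathrm{Sing}$, so to conclude they form a basis it remains to prove linear independence.

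For linear independence I would argue via simplicity of the spectrum, generalizing Proposition \ref{prop3.18}. The point is that distinct solutions yield Bethe vectors that are joint eigenvectors of $(\cH_0,\dots,\cH_N)$, and by Theorem \ref{thm:bveigen} the eigenvalues are the residues $(\Phi^{-1}\partial_{z_i}\Phi)(\bm t)$; I would show that for generic $\bm z$ these joint eigenvalue tuples are pairwise distinct. The eigenvalues depend algebraically on $\bm z$, and it suffices to exhibit one value of $\bm z$ (namely the degenerate limit, read off from the $2$-point data via Lemma \ref{cas eigen} and Lemma \ref{hum}) where they separate the solutions; genericity then propagates by the usual Zariski-density argument. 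Simplicity of the joint spectrum forces the eigenvectors to be linearly independent and the Hamiltonians to be diagonalizable, which is precisely the assertion of the theorem.

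The main obstacle I expect is controlling the degeneration in the third and fourth steps: one must verify that the limiting solutions constructed from the $2$-point pieces are genuinely \emph{isolated}, so that Lemma \ref{lem5.1} applies, and that the deformed solutions remain mutually distinct and have Bethe vectors that do not accidentally vanish or coincide for small $\epsilon$. Making the merging of the $N$ points precise, so that the Bethe ansatz equations decouple cleanly in the leading order and the count of solutions is exact in every weight space, is the delicate part; this is where the genericity of $\bm z$ is genuinely used, and where the explicit formulas of Corollary \ref{thm sol B} do the essential work by guaranteeing existence and uniqueness of each $2$-point building block.
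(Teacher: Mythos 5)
Your proposal is correct and follows essentially the same route as the paper's proof: collide the points at rates $\ve^{N+1-k}$ so the Bethe ansatz equations decouple in leading order into the nested $2$-point problems solved by Corollary \ref{thm sol B}, deform the resulting isolated solutions by Lemma \ref{lem5.1}, match the count against the multiplicity-one decomposition \eqref{eq:dec of B}, and separate the joint spectrum via (a generalization of) Proposition \ref{prop3.18} at the degenerate limit before concluding by Zariski-openness. The delicate points you flag (isolatedness, non-vanishing and distinctness of the limiting Bethe vectors) are exactly the ones the paper resolves, via the finiteness of critical points and the explicit asymptotics $\omega_{\mu_1,\dots,\mu_N}=\ve^K(v_{\mu_1,\dots,\mu_N}+o(1))$.
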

\begin{proof}
Our proof follows that of Theorem 5.2 of \cite{MVY}, see also of Section 4 in \cite{MV1}.

Pick distinct non-zero complex numbers $\tl z_1,\dots,\tl z_N$. We use Theorem \ref{thm3.14} to define a basis in the space of singular vectors ${\rm Sing} (V_\la\otimes V_{\omega_1}^{\otimes N})$ as follows.

We call a $(k+1)$-tuple of weights $\mu_0,\mu_1,\dots,\mu_k\in\mathcal P^+$ {\it admissible} if $\mu_0=\la$ and for $i=1,\dots,k$, we have a submodule $V_{\mu_i}\subset V_{\mu_{i-1}}\otimes V_{\omega_1}$, see \eqref{eq:dec of B}.

For an admissible tuple of weights, we define a singular vector $v_{\mu_0,\dots,\mu_k}\in V_\la\otimes V_{\omega_1}^{\otimes k}$ of weight  $\mu_k$ using induction on $k$ as follows. Let $v_{\mu_0}=v_\la$ be the highest weight vector for module $V_\la$. Let $k$ be such that $1\lle k\lle N$. Suppose we have the singular vector $v_{\mu_0,\dots,\mu_{k-1}}\in V_\la\otimes V_{\omega_1}^{\otimes k-1}$. It generates a submodule $V_{\mu_0,\dots,\mu_{k-1}}\subset V_\la\otimes V_{\omega_1}^{\otimes k-1}$ of highest weight $\mu_{k-1}$.

Let $\bar{\bs t}_k=(\bar{t}_{k,j}^{(b)})$, where $b=1,\dots, r$ and $j=1,\dots,l_{k,b}$, be the solution of the Bethe ansatz equation
associated to $V_{\mu_{k-1}}\otimes V_{\omega_1}$, $\bs z=(0,\tilde z_k)$ and $\bs l_k=(l_{k,1},\dots,l_{k,r})$ such that $\mu_{k-1}+\omega_1-\alpha(\bs l_k)=\mu_k$.
Note that $\bar{\bs t}_k$ depends on $\mu_{k-1}$ and $\mu_k$, even though we do not indicate this dependence explicitly.
Note also that in all cases $l_{k,b}\in\{0,1,2\}$.

Then, define $v_{\mu_0,\dots,\mu_{k}}$ to be the
Bethe vector $$v_{\mu_0,\dots,\mu_{k}}=\omega(0,\tl z_k,\bar {\bs t}_k)\in V_{\mu_0,\dots,\mu_{k-1}}\otimes V_{\omega_1}\subset V_\la\otimes V_{\omega_1}^{\otimes k}.$$
We denote by $V_{\mu_0,\dots,\mu_k}$ the submodule of $V_\la\otimes V_{\omega_1}^{\otimes k}$ generated by $v_{\mu_0,\dots,\mu_k}$.

The vectors $v_{\mu_0,\dots,\mu_N}\in V_\la\otimes V_{\omega_1}^{\otimes N}$ are called the \emph{iterated singular vectors}.
To each iterated singular vector $v_{\mu_0,\dots,\mu_N}$ we have an associated collection
$\bar{\bm t}=(\bar{\bs t}_1,\dots,\bar{\bs t}_N)$
consisting of all the Bethe roots used in its construction.

Clearly, the iterated singular vectors corresponding to all admissible $(N+1)$-tuples of weights form a basis in ${\rm Sing} (V_\la\otimes V_{\omega_1}^{\otimes N})$, so we have
$$
V_\la\otimes V_{\omega_1}^{\otimes N}=\bigoplus_{\mu_0,\dots,\mu_N}V_{\mu_0, \mu_1,\dots,\mu_N},
$$
where the sum is over all admissible $(N+1)$-tuples of weights.

To prove the theorem, we show that  in some region
of parameters $\bm z$ for any admissible $(N+1)$-tuple of weights $\mu_0,\dots,\mu_N$,
there exists a Bethe vector $\omega_{\mu_1,\dots,\mu_N}$ which tends to $v_{\mu_1,\dots,\mu_N}$
when approaching a certain point (independent on $\mu_i$) on the boundary of the region.

To construct the Bethe vector $\omega_{\mu_1,\dots,\mu_N}$ associated to $v_{\mu_1,\dots,\mu_N}$,
we need to find a solution to the Bethe equations associated to  $V_\la\otimes V_{\omega_1}^{\otimes N}$ with Bethe roots,
 ${\bs t}=({t}_{j}^{(b)})$, where $b=1,\dots, r$ and $j=1,\dots, \sum_{k=1}^N l_{k,b}$.

We do it for $\bs z$ of the form
\beq\label{5.4}
z_0=z,\quad\text{and}\quad z_k=z+\ve^{N+1-k}\tl z_k,\quad k=1,\dots, N,
\eeq
for sufficiently small $\ve\in\C^\times$. Here $z\in\C$ is an arbitrary fixed number and $\tilde z_k$ are as above.

Then, similarly to $\bar{\bs t}$ we write
 $\bs t=(\bs t_1,\dots,\bs t_N)$ where $\bs t_k=(t_{k,j}^{(b)})$, $b=1,\dots,r$ and $j=1,\dots, l_{k,b}$, is constructed in the form
\beq\label{5.5}
t_{k,j}^{(b)}=z+\ve^{N+1-k}\tl t_{k,j}^{(b)},\quad k=1,\dots,N,~j=1,\dots, l_{k,b},~b=1,\dots, r.
\eeq
The variables $t_{k,j}^{(b)}$ satisfy the system of Bethe ansatz equations:
\beq\label{5.3a}
-\frac{(\la,\alpha_b)}{t_{k,j}^{(b)}-z_0}+\sum_{s=1}^N\left(\frac{-2\delta_{b,1}}{t_{k,j}^{(b)}-z_s}+\sum_{\substack{q=1\\ (s,q)\ne (k,j)}}^{l_{s,b}} \frac{(\alpha_b,\alpha_b)}{t_{k,j}^{(b)}-t_{s,q}^{(b)}}+\sum_{q=1}^{l_{s,b+1}}\frac{(\alpha_b,\alpha_{b+1})}{t_{k,j}^{(b)}-t_{s,q}^{(b+1)}}+\sum_{q=1}^{l_{s,b-1}}\frac{(\alpha_b,\alpha_{b-1})}{t_{k,j}^{(b)}-t_{s,q}^{(b-1)}}\right)=0
\eeq
for $b=1,\dots,r$, $k=1,\dots, N$, $j=1,\dots,l_{k,b}$. Here we agree that $l_{s,0}=l_{s,{N+1}}=0$ for all $s$.

Consider the leading asymptotic behavior of the Bethe ansatz equations as $\ve\to 0$.
We claim that in the leading order, the Bethe ansatz equations for $\bs t$ reduce to the Bethe ansatz equations obeyed by the
variables $\bm{\bar t}$.

Consider for example the leading order of the Bethe equation for $t_{k,j}^{(1)}$. Note that
\[
\frac{(\la,\alpha_1)}{t_{k,j}^{(1)}-z_0}+\sum_{s=1}^N\frac{2}{t_{k,j}^{(1)}-z_s}=\left(\frac{(\la,\alpha_1)}{\tl t_{k,j}^{(1)}}+\frac{2(k-1)}{\tl t_{k,j}^{(1)}}+\frac{2}{\tl t_{k,j}^{(1)}-\tl z_k}+\mc O(\ve)\right)\ve^{-N-1+k},
\]
\[
\sum_{s=1}^N\sum_{\substack{q=1\\ (s,q)\ne (k,j)}}^{l_{s,1}}\frac{(\alpha_1,\alpha_1)}{t_{k,j}^{(1)}-t_{s,q}^{(1)}}=\left(\sum_{\substack{q=1 \\ q\ne j}}^{l_{k,1}}\frac{(\alpha_1,\alpha_1)}{\tl t_{k,j}^{(1)}-\tl t_{k,q}^{(1)}}+\sum_{s=1}^{k-1}\sum_{q=1}^{l_{s,1}}\frac{(\alpha_1,\alpha_1)}{\tl t_{k,j}^{(1)}}+\mc O(\ve)\right)\ve^{-N-1+k},
\]and similarly
\[
\sum_{s=1}^N\sum_{q=1}^{l_{s,2}}\frac{(\alpha_1,\alpha_2)}{t_{k,j}^{(1)}-t_{s,q}^{(2)}}=\left(\sum_{q=1}^{l_{k,2}}\frac{(\alpha_1,\alpha_2)}{\tl t_{k,j}^{(1)}-\tl t_{k,q}^{(2)}}+\sum_{s=1}^{k-1}\sum_{q=1}^{l_{s,2}}\frac{(\alpha_1,\alpha_2)}{\tl t_{k,j}^{(1)}}+\mc O(\ve)\right)\ve^{-N-1+k}.
\]Then by definition of the numbers $l_{s,b}$, we have
\[
\mu_{k-1}=\la+(k-1)\omega_1-\sum_{b=1}^r\sum_{s=1}^{k-1}\sum_{q=1}^{l_{s,b}}\alpha_b
\]and, in particular,
\[(\mu_{k-1},\alpha_1)=(\la,\alpha_1)+2(k-1)-\sum_{s=1}^{k-1}\left(\sum_{q=1}^{l_{s,1}}(\alpha_1,\alpha_1)-\sum_{q=1}^{l_{s,2}}(\alpha_1,\alpha_2)\right).
\]Therefore
$$
-\frac{(\mu_{k-1},\alpha_1)}{\tl t_{k,j}^{(1)}}-\frac{2}{\tl t_{k,j}^{(1)}-\tl z_k}+\sum_{\substack{ q=1\\ q\ne j}}^{l_{k,1}}\frac{(\alpha_1,\alpha_1)}{\tl t_{k,j}^{(1)}-\tl t_{k,q}^{(1)}}+\sum_{q=1}^{l_{k,2}}\frac{(\alpha_1,\alpha_2)}{\tl t_{k,j}^{(1)}-\tl t_{k,q}^{(2)}}=\mc O(\ve).
$$
At leading order this is indeed the Bethe equation for $\bar t_{k,j}^{(1)}$ from the set of Bethe equations for the tensor product $V_{\mu_{k-1}}\otimes V_{\omega_1}$, with the tensor factors assigned to the points $0$ and $\tl z_k$, respectively. The other equations work similarly.

By Lemma \ref{lem5.1} it follows that for sufficiently small $\ve$ there exists a solution to the Bethe equations (\ref{5.3a}) of the form $\tl t_{j,k}^{(a)}=\bar t_{j,k}^{(a)}+o(1)$.

Now we claim that the Bethe vector $\omega_{\mu_1,\dots,\mu_N}=\omega(\bs z,\bs t)$ associated to $\bs t$ has leading asymptotic behavior
\beq\label{5.7}
\omega_{\mu_1,\dots,\mu_N}=\ve^K(v_{\mu_1,\dots,\mu_N}+o(1)),
\eeq
as $\ve \to 0$, for some $K$. Consider the definition (\ref{wtf}) of $\omega(\bs z,\bs t)$. We write $\omega_{\mu_1,\dots,\mu_N}=w_1+w_2$
where $w_1$ contains only those summands in which every factor in the denominator is of the form
\[
t_{k,j}^{(a)}-t_{k,q}^{(b)}\quad \text{or}\quad t_{k,j}^{(a)}-z_k.
\]
The term $w_2$ contains terms where at least one factor is
 of the form $t_{k,j}^{(a)}-t_{s,q}^{(b)}$ or $t_{k,j}^{(a)}-z_s$, $s\ne k$.
After substitution using (\ref{5.4}) and (\ref{5.5}), one finds that
\[
w_1=\left(\prod_{k=1}^N\prod_{j=1}^{r}\left(\ve^{-N-1+k}\right)^{l_{k,j}}\right)v_{\mu_1,\dots,\mu_N},
\]
and that $w_2$ is subleading to $w_1$, which establishes our claim.

Consider two distinct Bethe vectors $\omega_{\mu_1,\dots,\mu_N}$ and $\omega_{\mu'_1,\dots,\mu'_N}$ constructed as above. By Theorem
\ref{thm:bveigen} both are simultaneous eigenvectors of the quadratic Gaudin Hamiltonians $\cH_0,\cH_1,\dots,\cH_N$. Let $k$, be the largest possible number in $\{1,\dots,N\}$ such that $\mu_i=\mu'_i$ for all $i=1,\dots, k-1$. Consider the Hamiltonian $\cH_k$. When the $z_i$
are chosen as in (\ref{5.4}) then one finds
\beq\label{5.8}
\cH_k=\ve^{-N-1+k}\left(\sum_{j=0}^{k-1}\frac{\Omega^{(k,j)}}{\tl z_k}+o(1)\right).
\eeq
The sum $\sum_{j=0}^{k-1}\frac{\Omega^{(k,j)}}{\tl z_k}$ coincides with the action  of the quadratic Gaudin Hamiltonian $\cH$ of the spin chain $V_{\mu_{k-1}}\otimes V_{\omega_1}$ with sites at $0$ and $\tl z_k$,  embedded in $V_\la\otimes (V_{\omega_1})^{\otimes k}$ via
$$V_{\mu_{k-1}}\otimes V_{\omega_1}\simeq V_{\mu_1,\dots,\mu_{k-1}}\otimes V_{\omega_1}\subset V_\la\otimes (V_{\omega_1})^{\otimes k}.$$
Since $\mu_k\ne\mu'_k$, $v_{\mu_1,\dots,\mu_k}$ and $v_{\mu'_1,\dots,\mu'_k}$ are eigenvectors of $\sum_{j=0}^{k-1}\frac{\Omega^{(k,j)}}{\tl z_k}$ with distinct eigenvalues by Proposition \ref{prop3.18}. By (\ref{5.7}) and (\ref{5.8}), we have that the eigenvalues of $\cH_k$ on $\omega_{\mu_1,\dots,\mu_N}$ and $\omega_{\mu'_1,\dots,\mu'_N}$ are distinct.

The argument above establishes that the set of points $\bm z = (z_0,z_1,\dots, z_N )$ for which the Gaudin
Hamiltonians are diagonalizable with joint simple spectrum is non-empty. It is a Zariski-open set, therefore the theorem follows.
\end{proof}

\section{The cases of ${\rm C}_r$ and ${\rm D}_r$}\label{sec C and D}
\subsection{The case of ${\rm C}_r$}\label{C sec}
Let $\g=\mathfrak{sp}(2r)$, be the simple Lie algebra of type ${\rm C}_r$, $r\gge 3$. We have $(\alpha_i,\alpha_i)=2$, $i=1,\dots,r-1$, and $(\alpha_r,\alpha_r)=4$. We work with data $\bm \La=(\la,\omega_1)$, $\bm z=(0,1)$, where $\la\in\mathcal P^+$.

We have
\begin{align}\label{2.6.2}
V_{\la}\otimes V_{\omega_1}=&V_{\la+\omega_1}\oplus V_{\la+\omega_1-\alpha_1}\oplus \dots \oplus V_{\la+\omega_1-\alpha_1-\dots-\alpha_r} \nonumber\\&\oplus V_{\la+\omega_1-\alpha_1-\dots-\alpha_{r-2}-2\alpha_{r-1}-\alpha_r}\oplus\dots\oplus V_{\la+\omega_1-2\alpha_1-\dots-2\alpha_{r-1}-\alpha_r}\nonumber\\
=&V_{(\la_1+1,\la_2,\dots,\la_r)}\oplus V_{(\la_1-1,\la_2+1,\la_3,\dots,\la_r)}\oplus
V_{(\la_1,\dots,\la_{k-1},\la_{k}-1,\la_{k+1}+1,\dots,\la_r)}\nonumber\\ &\oplus\dots\oplus V_{(\la_1,\la_2,\dots,\la_{r-1}-1,\la_r+1)}\oplus V_{(\la_1,\la_2,\dots,\la_{r-2},\la_{r-1}+1,\la_r-1)}\nonumber\\
&\oplus V_{(\la_1,\la_2,\dots,\la_{r-3},\la_{r-2}+1,\la_{r-1}-1,\la_r)}\oplus \dots \oplus V_{(\la_1+1,\la_2-1,\la_3,\dots,\la_r)}\oplus V_{(\la_1-1,\la_2,\dots,\la_r)},
\end{align}
with the convention that the summands with non-dominant highest weights are omitted. Note, in particular, all multiplicities are 1.

We call an $r$-tuple of integers $\bs l=(l_1,\dots,l_r)$ {\it admissible} if the $V_{\la+\omega_1-\alpha(\bs l)}$ appears in \eqref{2.6.2}.

The admissible $r$-tuples $\bs l$ have the form
\beq\label{eq adm C}(\underbrace{1,\dots,1}_{k_1 \text{ ones}},0,\dots,0)\text{ or } (\underbrace{1,\dots,1}_{k_2 \text{ ones}},2,\dots,2,1),
\eeq
where $k_1=0,1,\dots,r$ and $k_2=0,1,\dots,r-2$. In the first case the length $l=l_1+\dots +l_r$ is $k_1$ and in the second case $2r-k_2-1$. It follows that
different admissible $r$-tuples have different length and, therefore, admissible tuples $\bs l$ are parametrized by length $l\in \{0,1,\dots,2r-1\}$. We call a nonnegative integer $l$ {\it admissible} if it is the length of an admissible  $r$-tuple $\bs l$. More precisely, a nonnegative integer $l$ is admissible if $l=0$ or if $l\lle r$, $\la_l>0$ or if $r< l\lle 2r-1$, $\la_{2r-l}>0$.

Similarly to the case of type ${\rm B}_r$, see Theorem \ref{thm B2 generic} and  Corollary \ref{thm sol B}, we obtain the solutions to the Bethe ansatz equations for $V_\la\otimes V_{\omega_1}$.
\begin{thm}\label{thm sol C}
Let $\g=\mathfrak{sp}(2r)$. Let $\bs l$ be as in \eqref{eq adm C}. If $\bs l$ is not admissible then the Bethe ansatz equation \eqref{eq:bae} associated to $\bm \La,\bm z,\bm l$ has no solutions. If $\bs l$ is admissible then the Bethe ansatz equation \eqref{eq:bae} associated to $\bm \La,\bm z,\bm l$ has exactly one solution represented by the following $r$-tuple of polynomials $\bm y^{(l)}$.

For $l=0,1,\dots,r-1$, we have $\bm y^{(l)}=(x-c_1^{(l)},\dots,x-c_l^{(l)},1,\dots,1)$, where $c_j^{(l)}$ are given by \eqref{3.4.12}.

For $l=r$, we have $\bm y^{(l)}=(x-c_1^{(r)},\dots,x-c_r^{(r)})$, where
$$
c_j^{(r)}=\prod_{i=1}^{j}\frac{\la_i+\dots+\la_{r-1}+2\la_r+r+1-i}{\la_i+\dots+\la_{r-1}+2\la_r+r+2-i},\quad j=1,\dots,r-1,
$$
$$
c_r^{(r)}=\frac{\la_r}{\la_r+1}\prod_{i=1}^{r-1}\frac{\la_i+\dots+\la_{r-1}+2\la_r+r+1-i}{\la_i+\dots+\la_{r-1}+2\la_r+r+2-i}.
$$

For $l=r+1,\dots,2r-1$, we have $\bm y^{(l)}=(x-c_1^{(l)},\dots,x-c_{2r-l-1}^{(l)},(x-a_{2r-l}^{(l)})(x-b_{2r-l}^{(l)}),\dots,(x-a_{r-1}^{(l)})(x-b_{r-1}^{(l)}),x-c_{r}^{(l)})$, where
$$
c_j^{(l)}=\prod_{i=1}^{j}\frac{\la_i+\dots+\la_{2r-1-l}+2\la_{2r-l}+\dots+2\la_r+l+1-i}{\la_i+\dots+\la_{2r-1-l}+2\la_{2r-l}+\dots+2\la_r+l+2-i},\quad j=1,\dots,2r-l-1,
$$
\begin{align*}
c_r^{(l)}=&\prod_{i=1}^{2r-l-1}\frac{\la_i+\dots+\la_{2r-1-l}+2\la_{2r-l}+\dots+2\la_r+l+1-i}{\la_i+\dots+\la_{2r-1-l}+2\la_{2r-l}+\dots+2\la_r+l+2-i}\nonumber\\
&\times\prod_{i=2r-l}^r\frac{\la_{2r-l}+\dots+\la_i+2\la_{i+1}+\dots+2\la_r+l-i}{\la_{2r-l}+\dots+\la_i+2\la_{i+1}+\dots+2\la_r+l-i+1},
\end{align*}
\begin{align*}
a_k^{(l)}b_k^{(l)}=&\left(\prod_{i=1}^{2r-l-1}\frac{\la_i+\dots+\la_{2r-1-l}+2\la_{2r-l}+\dots+2\la_r+l+1-i}{\la_i+\dots+\la_{2r-1-l}+2\la_{2r-l}+\dots+2\la_r+l+2-i}\right)^2\nonumber\\
&\times\prod_{i=2r-l}^{k-1}\frac{\la_{2r-l}+\dots+\la_i+2\la_{i+1}+\dots+2\la_r+l-i}{\la_{2r-l}+\dots+\la_i+2\la_{i+1}+\dots+2\la_r+l-i+1}\nonumber\\
&\times\prod_{i=2r-l}^{r}\frac{\la_{2r-l}+\dots+\la_i+2\la_{i+1}+\dots+2\la_r+l-i}{\la_{2r-l}+\dots+\la_i+2\la_{i+1}+\dots+2\la_r+l-i+1}\nonumber\\
&\times\prod_{i=1}^{r+1-k}\frac{\la_{2r-l}+\dots+\la_{r+1-i}+l+1-i-r}{\la_{2r-l}+\dots+\la_{r+1-i}+l+2-i-r}
\end{align*}
and
\begin{align*}
a_k^{(l)}+b_k^{(l)}=&\prod_{i=1}^{2r-l-1}\frac{\la_i+\dots+\la_{2r-1-l}+2\la_{2r-l}+\dots+2\la_r+l+1-i}{\la_i+\dots+\la_{2r-1-l}+2\la_{2r-l}+\dots+2\la_r+l+2-i}\nonumber\\
&\times\prod_{i=2r-l}^{k-1}\frac{\la_{2r-l}+\dots+\la_i+2\la_{i+1}+\dots+2\la_r+l-i}{\la_{2r-l}+\dots+\la_i+2\la_{i+1}+\dots+2\la_r+l-i+1}\nonumber\\
&\times\left(\frac{2\la_{2r-l}+\dots+2\la_r+2l-2r}{2\la_{2r-l}+\dots+2\la_r+2l+1-2r}+\frac{2\la_{2r-l}+\dots+2\la_r+2l+2-2r}{2\la_{2r-l}+\dots+2\la_r+2l+1-2r}\right.\nonumber\\
&\times \prod_{i=k}^{r}\frac{\la_{2r-l}+\dots+\la_i+2\la_{i+1}+\dots+2\la_r+l-i}{\la_{2r-l}+\dots+\la_i+2\la_{i+1}+\dots+2\la_r+l-i+1}\nonumber\\
&\times \left.\prod_{i=k}^{r}\frac{\la_{2r-l}+\dots+\la_{i}+l+i-2r}{\la_{2r-l}+\dots+\la_{i}+l+i+1-2r}\right),
\end{align*}
for $k=2r-l,\dots,r-1$.$\hfill\square$
\end{thm}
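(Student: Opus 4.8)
The plan is to follow, almost verbatim, the strategy developed for type $\mathrm{B}_r$ in Sections \ref{sec n=2, B}--\ref{sec check generic}: first set up a reproduction recursion that reduces every solution to the trivial one at $l=0$, thereby proving uniqueness; then run the recursion backwards from $(1,\dots,1)$ to produce the explicit candidate $\bm y^{(l)}$; and finally verify that this candidate is generic, so that by Theorem \ref{thm:2.5} it genuinely represents a critical point. The admissible $\bm l$ are already parametrized by length in \eqref{eq adm C}, and the shape of $\bm y^{(l)}$ (linear for $l\lle r$; a block of quadratics $y_{2r-l},\dots,y_{r-1}$ flanked by linear $y_1,\dots,y_{2r-l-1}$ and a single linear $y_r$ for $l>r$) is dictated by \eqref{2.6.2}. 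The only structural departure from type $\mathrm{B}_r$ is that here the long root is $\alpha_r$, so $y_r$ stays linear throughout and the reproduction in the $r$-th direction is of long type; this forces a separate treatment of the boundary length $l=r$ and explains the $2\la_r$ and $\la_r/(\la_r+1)$ factors appearing in the $l=r$ formulas.

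For uniqueness I would first record the type $\mathrm{C}_r$ analogues of Lemmas \ref{lem3.4}, \ref{lem3.12} and \ref{lem3.7}, describing the reproduction in the $k$-th direction that lowers the length from $l$ to $l-1$. As in type $\mathrm{B}_r$, the downward step is easy: the new polynomial is linear (in directions $1,\dots,r-1$) or the degree drop is forced at the long node, so genericity of the shorter tuple follows from genericity of the longer one together with the fact that a monic linear polynomial has no multiple root and cannot share its unique root with $T_k$ or the neighbours $y_{k\pm1}$. The degree-counting that pins down the shift ($\nu_k>0$, resp. the inequality at the long node) goes through unchanged. I would then compute the explicit shifted Weyl action and its inverse, the type $\mathrm{C}_r$ counterparts of Lemmas \ref{lem3.10} and \ref{lem3.9}, and use them as in Lemma \ref{lem3.8} to certify that the hypotheses $\nu_k\gge 0$ hold at every step. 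Since the reproductions are invertible and the $l=0$ solution is the unique $(1,\dots,1)$, this yields at most one solution for each admissible $l$, and none when $l$ is not admissible.

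Next I would produce the explicit formulas by iterating the reproduction upward from $(1,\dots,1)$, solving at each node the Wronskian equation $W(y_k,\tl y_k)=T_k\prod_{j\ne k}y_j^{-\langle\alpha_j,\alpha_k^\vee\rangle}$ for the quasi-polynomial solution and reading off the new constant term (linear nodes) or the new pair $(a_kb_k,\,a_k+b_k)$ (quadratic nodes) from the $x^0$-, $x^2$- and $x^1$-coefficients, exactly as in \eqref{3.4.4}, \eqref{3.4.8} and \eqref{3.4.9}. The bookkeeping differs from type $\mathrm{B}_r$ only in the order of directions and in the data $T_k$ at the long node $r$, where the coroot pairing $\langle\omega_1,\alpha_r^\vee\rangle$ introduces the factor $2\la_r$; carrying the telescoping induction through yields the products displayed in the statement, and re-expressing $\theta$ in terms of $\la$ via the inverse Weyl formulas gives the stated closed forms.

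The main obstacle is the last step: proving that the explicit $\bm y^{(l)}$ is generic, i.e. establishing \textbf{G1}, \textbf{G2}, \textbf{G3}, which is where the real work lies (compare Theorem \ref{thm B2 generic}). Property \textbf{G2} reduces, for $\la\in\cP^+$, to $y_1^{(l)}(1)\ne0$ and $y_i^{(l)}(0)\ne0$ when $\la_i\ne0$; here one uses that every elementary factor in \eqref{3.4.12} and in the $l>r$ formulas is a ratio of two consecutive positive integers and hence lies in $(0,1)$, so the $c_j^{(l)}$ and the products $a_k^{(l)}b_k^{(l)}$ are nonzero and strictly between $0$ and $1$, with admissibility ($\la_{2r-l}>0$, resp. $\la_r>0$ at $l=r$) guaranteeing the relevant factor does not degenerate. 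For \textbf{G1} one must rule out $a_k^{(l)}=b_k^{(l)}$; as in type $\mathrm{B}_r$, a coincidence would force $a_k$ to equal a neighbouring root ($a_{k\pm1}$ or $c_{k-1}$), and substituting that value into the defining cubic identity (the type $\mathrm{C}_r$ analogue of \eqref{3.4.3}) produces an algebraic relation among the $\la_i$ which is impossible because its left side exceeds $1$ while its right side is a product of factors in $(0,1)$ and is therefore $<1$, by the same elementary estimate on products of consecutive integers used after \eqref{3.10.1}. Property \textbf{G3}, the absence of common roots of adjacent polynomials at nodes with $a_{ij}<0$, follows by the same divisibility-plus-positivity argument and in passing yields $y_1^{(l)}(1)\ne0$, completing \textbf{G2}. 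The one genuinely new point to watch is the long node $r$: since $y_r$ is linear, the common-root analysis there involves only linear factors and is simpler, but the shifted factor $2\la_{2r-l}+\dots+\la_r$ must be tracked carefully to keep the positivity estimates valid. Once genericity is established, Theorem \ref{thm:2.5} upgrades the candidate to an honest critical point, and combined with uniqueness this proves the theorem.
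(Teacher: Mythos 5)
Your proposal is correct and takes essentially the same route as the paper: the paper offers no separate argument for Theorem \ref{thm sol C}, deriving it ``similarly to the case of type $\mathrm{B}_r$'' via exactly the ingredients you list --- type-C analogues of the recursion Lemmas \ref{lem3.4}--\ref{lem3.7} and of the Weyl-action Lemmas \ref{lem3.10}, \ref{lem3.9} for uniqueness, upward reproduction from $(1,\dots,1)$ with the coefficient bookkeeping of \eqref{3.4.4}, \eqref{3.4.8}, \eqref{3.4.9} for the explicit formulas, and the \textbf{G1}--\textbf{G3} positivity estimates as in Theorem \ref{thm B2 generic}. Two harmless inaccuracies in your asides: the factor $2\la_r$ enters through $(\alpha_r,\la)=d_r\la_r=2\la_r$ in the master function rather than through $\langle\omega_1,\alpha_r^\vee\rangle$ (which vanishes), and in type $\mathrm{C}_r$ the length $l=r$ requires no import of the type-A result of \cite{MV3}, since $\omega_1-\alpha_1-\dots-\alpha_r$ lies in the Weyl orbit of $\omega_1$ (unlike the exceptional zero weight in type B), so the reproduction chain passes through $l=r$ and the recursion covers it uniformly.
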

Therefore, in parallel to Theorem \ref{thm B generic}, we have the completeness of Bethe ansatz.
\begin{thm}
Let $\g=\mathfrak{sp}(2r)$ and $\la\in\cP^+$. For a generic $(N+1)$-tuple of distinct complex numbers $\bm z=(z_0,z_1,\dots,z_N)$, the Gaudin Hamiltonians $(\cH_0,\cH_1,\dots,\cH_N)$ acting in ${\rm Sing}\left(V_\la\otimes V_{\omega_1}^{\otimes N}\right)$ are diagonalizable and have simple joint spectrum. Moreover, for generic $\bm z$ there exists a set of solutions $\{\bm t_i,~i\in I\}$ of the Bethe ansatz equation \eqref{eq:bae} such that the corresponding Bethe vectors $\{\omega(\bm z,\bm t_i),~i\in I\}$ form a basis of ${\rm Sing}\left(V_\la\otimes V_{\omega_1}^{\otimes N}\right)$.$\hfill\square$
\end{thm}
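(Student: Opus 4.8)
The plan is to follow the proof of Theorem \ref{thm B generic} line by line, since every structural ingredient used there for type $\mathrm{B}_r$ has an exact counterpart in type $\mathrm{C}_r$. First I would record the two auxiliary facts for the two-point case $V_\la\otimes V_{\omega_1}$. The completeness statement (the analogue of Theorem \ref{thm3.14}) is immediate: all multiplicities in the decomposition \eqref{2.6.2} equal one, and Theorem \ref{thm sol C} produces a unique solution of the Bethe ansatz equation for each admissible length, so by Theorems \ref{thm:bvnonzero} and \ref{thm:bveigen} the corresponding Bethe vectors form a basis of $\mathrm{Sing}\,(V_\la\otimes V_{\omega_1})$. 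The simple-spectrum statement (the analogue of Proposition \ref{prop3.18}) carries over verbatim, since it rests only on the identity $\Omega^{(1,2)}=\tfrac12(\Delta\Omega_0-1\otimes\Omega_0-\Omega_0\otimes1)$ together with Lemmas \ref{cas eigen} and \ref{hum}, none of which depend on the type.

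Next I would fix distinct nonzero $\tl z_1,\dots,\tl z_N$ and build the iterated singular vectors $v_{\mu_0,\dots,\mu_N}$ exactly as in the type-$\mathrm B$ proof, the admissible tuples of weights now being read off from \eqref{2.6.2}; these vectors form a basis of $\mathrm{Sing}\,(V_\la\otimes V_{\omega_1}^{\otimes N})$. I would then specialize the points as in \eqref{5.4}, namely $z_0=z$ and $z_k=z+\ve^{N+1-k}\tl z_k$, and search for Bethe roots in the scaled form \eqref{5.5}, $t_{k,j}^{(b)}=z+\ve^{N+1-k}\tl t_{k,j}^{(b)}$.

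The one computational step that must be redone is the leading-order asymptotics of the Bethe ansatz equations. Here I would substitute the scaled variables into the type-$\mathrm{C}_r$ Bethe equations and check that, as $\ve\to0$, they reduce in leading order to the decoupled two-point Bethe equations for each factor $V_{\mu_{k-1}}\otimes V_{\omega_1}$ with tensor factors at the points $0$ and $\tl z_k$. The only change from type $\mathrm B$ is the set of inner products $(\alpha_b,\alpha_c)$ and the pairing of $\omega_1$ with the simple roots; however, the grouping of terms by powers of $\ve$ is dictated solely by the scaling exponents $N+1-k$, which are identical to the type-$\mathrm B$ case, so the reduction goes through with the $\mathrm{C}_r$ coefficients inserted mechanically. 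This is the part that requires genuine care, though it is routine. Lemma \ref{lem5.1} then guarantees, for sufficiently small $\ve$, a solution $\tl t_{k,j}^{(b)}=\bar t_{k,j}^{(b)}+o(1)$ near the product of the two-point solutions.

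Finally I would establish the asymptotic behavior \eqref{5.7} of the Bethe vectors, $\omega_{\mu_1,\dots,\mu_N}=\ve^K(v_{\mu_1,\dots,\mu_N}+o(1))$, by the same splitting $\omega=w_1+w_2$ of the weight function \eqref{wtf} into leading and subleading parts. Using the Hamiltonian asymptotics \eqref{5.8}, the two-point simple-spectrum result separates any two distinct iterated singular vectors by the eigenvalue of $\cH_k$ at the smallest index $k$ with $\mu_k\ne\mu_k'$. This shows that the set of $\bm z$ for which the Gaudin Hamiltonians are diagonalizable with simple joint spectrum is nonempty; being Zariski-open, it is dense, and the theorem follows.
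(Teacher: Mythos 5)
Your proposal is correct and matches the paper's intended argument exactly: the paper gives no separate proof for type $\mathrm{C}_r$, stating only that the result follows ``in parallel to Theorem \ref{thm B generic},'' and your step-by-step transcription (two-point completeness from \eqref{2.6.2} and Theorem \ref{thm sol C}, the Casimir/simple-spectrum argument, iterated singular vectors, the $\ve$-scaling with Lemma \ref{lem5.1}, and the eigenvalue separation via the asymptotics \eqref{5.7}--\eqref{5.8}) is precisely that proof. The one point worth making explicit is that the analogue of Proposition \ref{prop3.18} does hold in type $\mathrm{C}_r$ because the admissible tuples \eqref{eq adm C} are totally ordered componentwise, so the highest weights in \eqref{2.6.2} are pairwise comparable and Lemma \ref{hum} applies --- in contrast to type $\mathrm{D}_r$, where this fails and the simple-spectrum claim is dropped.
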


Similarly to Section \ref{sec oper}, following \cite{MV2}, we introduce a linear differential operator $D(\bm y)$ of order $2r+1$ by the formula
\begin{align*}
D_\la(\bm y)=&\left(\pa-\ln'\left(\frac{T_1^2\dots T_{r-1}^2 T_r^2}{y_1}\right)\right)\left(\pa-\ln'\left(\frac{y_1T_1^2\dots T_{r-1}^2T_r^2}{y_2T_1}\right)\right)\dots\\
&\times \left(\pa-\ln'\left(\frac{y_{r-2}T_1^2\dots T_{r-1}^2 T_r^2}{y_{r-1}T_1\dots T_{r-2}}\right)\right)\dots \left(\pa-\ln'\left(\frac{y_{r-1}T_1^2\dots T_{r-1}^2 T_r^2}{y_r^2 T_1\dots T_{r-1}}\right)\right)\\
&\times \left(\pa-\ln'\left(T_1\dots T_{r}\right)\right)\left(\pa-\ln'\left(\frac{y_{r}^2T_1\dots T_{r-1} }{y_{r-1}}\right)\right)\\
&\times \left(\pa-\ln'\left(\frac{y_{r-1}T_1\dots T_{r-2}}{y_{r-2}}\right)\right)\dots\left(\pa-\ln'\left(\frac{y_2T_1}{y_1}\right)\right)(\pa-\ln'(y_1)),
\end{align*}
where $T_i$, $i=1,\dots,r$, are given by \eqref{eq T}.

If $\bs y$ is an $r$-tuple of polynomials representing a critical point associated with integral dominant weights $\La_1,\dots,\La_n$ and points $z_1,\dots,z_n$ of type $\mathrm{C}_r$, then by \cite{MV2}, the kernel of $D_\la(\bm y)$ is a self-dual space of polynomials. By \cite{MM} the coefficients of $D_\la(\bm y)$ are eigenvalues of higher Gaudin Hamiltonians acting on the Bethe vector related to $\bs y$.

For admissible $l$ and $\la \in\mathfrak{h}^*$, define $a_\la^l(1),\dots, a_\la^l(r)$ as follows.

For $l=0,\dots,r-1$, $i=1,\dots, l$, set $a_\la^l(i)=\la_i+\dots+\la_l+l+1-i$. For $l=0,\dots,r$, $i=l+1, \dots, r$, set $a_\la^l(i)=0$.

For $l=r$, $i=1,\dots, r-1$, set $a_\la^l(i)=\la_i+\dots+\la_{r-1}+2\la_r+r+2-i$ and $a_\la^r(r)=\la_r+1$.

For $l=r+1,\dots,2r-1$, set $k=2r-l-1$. Then for $i=1,\dots, k$, set $$a_\la^l(i)=\la_i+\dots+\la_k+2\la_{k+1}+\dots+2\la_{r-1}+2\la_r+2r+1-k-i$$
and for $i=k+1,\dots, r-1$, set $a_\la^l(i)=2\la_{k+1}+\dots+2\la_{r-1}+2\la_r+2r-2k$ and $a_\la^l(r)=\la_{k+1}+\dots+\la_{r-1}+\la_r+r-k$.

\begin{prop}\label{diff C} Let the r-tuple $\bm y$ represent the solution of the Bethe ansatz equation \eqref{eq:bae} associated to $\bm\La$, $\bm z$ and admissible $\bs l$, where $\la\in\cP^+$. Then $D_\la(\bm y)=D_\la(x^{a_\la^l(1)},\dots,x^{a_\la^l(r)})$.
\qed
\end{prop}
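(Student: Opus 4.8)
The plan is to mirror the proof of Proposition \ref{diff B} verbatim, replacing the unfolding into type $\mathrm{A}_{2r-1}$ used there by an unfolding into type $\mathrm{A}_{2r}$ dictated by the Langlands dual $\mathfrak{so}(2r+1)$ of $\mathfrak{sp}(2r)$. Since $D_\la(\bm y)$ has order $2r+1$ and its central factor $(\pa-\ln'(T_1\cdots T_r))$ carries no $y_i$, the natural companion object is the palindromic $2r$-tuple
\[
\tilde{\bm y}=(y_1,\dots,y_{r-1},y_r,y_r,y_{r-1},\dots,y_1),
\]
in which the long-root polynomial $y_r$ occupies both central slots $r$ and $r+1$. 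First I would verify, following \cite{MV2}, that $\tilde{\bm y}$ represents a critical point of type $\mathrm{A}_{2r}$ for the symmetrized weight data (with the type-$\mathrm{A}$ functions $\tilde T_i$ related to the $T_i$ under $i\mapsto 2r+1-i$), and that the type-$\mathrm{A}_{2r}$ differential operator attached to $\tilde{\bm y}$ equals $D_\la(\bm y)$; here the coincidence $\tilde y_r/\tilde y_{r+1}=1$ is exactly what produces the $y$-free middle factor.

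Next I would track how the reproduction procedure of type $\mathrm{C}_r$ lifts to type $\mathrm{A}_{2r}$. The Weyl group $W(\mathrm{C}_r)=W(\mathrm{B}_r)$ sits inside $W(\mathrm{A}_{2r})$ as the centralizer of the diagram involution $i\mapsto 2r+1-i$, so reproduction in a short-root direction $i$ with $1\lle i\lle r-1$ should correspond to the commuting pair of $\mathrm{A}_{2r}$ reproductions in directions $i$ and $2r+1-i$, in complete parallel with the type $\mathrm{B}$ case. The genuinely new feature is the long root $\alpha_r$: its two nodes $r$ and $r+1$ are \emph{adjacent} in $\mathrm{A}_{2r}$, so the folded reflection is the longest element $s_rs_{r+1}s_r$ of the central $\mathrm{A}_2$ subsystem, and reproduction in the $r$-th $\mathrm{C}_r$-direction must be realized as the length-three composite of $\mathrm{A}_{2r}$ reproductions in directions $r$, $r+1$, $r$.

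With this correspondence in hand, I would invoke Lemma 4.2 of \cite{MV4}, which asserts that each elementary type-$\mathrm{A}$ reproduction leaves the associated differential operator unchanged. Applying it at every step shows that $D_\la(\bm y)$ is constant along the whole chain of reproductions of Lemma \ref{lem3.8} joining $\bm y^{(l)}$ to the trivial solution; evaluating at the monomial endpoint then yields $D_\la(\bm y)=D_\la(x^{a_\la^l(1)},\dots,x^{a_\la^l(r)})$, with the exponents $a_\la^l(i)$ read off from the quasi-polynomials $\tilde T_i$ at the base point via Lemma \ref{lem3.9}.

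I expect the main obstacle to be precisely this long-root step. Because the folded reflection has length three rather than one (unlike the fixed central node in type $\mathrm{B}$), one must check that the non-palindromic intermediate tuple obtained after reproducing at node $r$ alone still lies in the regime where Lemma 4.2 of \cite{MV4} applies, and that the composite $s_rs_{r+1}s_r$ — which commutes with the diagram involution and hence preserves $\sigma$-symmetry — returns a palindromic tuple representing the correct type-$\mathrm{C}_r$ critical point. Propagating the $T_i$-exponents through this three-fold composite so as to land on the stated values $a_\la^l(i)$ is the only delicate piece of bookkeeping; everything else is a routine transcription of the type $\mathrm{B}$ argument.
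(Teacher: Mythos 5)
Your overall strategy --- transporting the proof of Proposition \ref{diff B} through an unfolding into type $\mathrm{A}_{2r}$ and invoking Lemma 4.2 of \cite{MV4} along the reproduction chain of solutions --- is indeed the intended route, and your treatment of the short-root directions $i\lle r-1$ (pairs of commuting type-A reproductions at nodes $i$ and $2r+1-i$) works exactly as in type B. The gap is at the long root, and it is not the bookkeeping issue you flag but the very existence of your central object. The palindromic tuple $\tilde{\bm y}=(y_1,\dots,y_{r-1},y_r,y_r,y_{r-1},\dots,y_1)$ cannot represent a critical point of type $\mathrm{A}_{2r}$ once $\deg y_r\gge 1$, i.e., for every admissible $l\gge r$: the nodes $r$ and $r+1$ are \emph{adjacent}, so the equal entries $\tilde y_r=\tilde y_{r+1}=y_r$ violate condition {\bf G2}--{\bf G3} genericity (adjacent colors sharing all roots), and the symmetric $\mathrm{A}_{2r}$ master function is identically singular on such configurations. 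This is structural: the diagram involution of $\mathrm{A}_{2r}$ has no fixed node, and folding $\mathrm{A}_{2r}$ yields the non-reduced system $\mathrm{BC}_r$ (the twisted case $\mathrm{A}_{2r}^{(2)}$), not a simple Lie algebra --- unlike the pair $(\mathrm{B}_r,\mathrm{A}_{2r-1})$ used in Proposition \ref{diff B}, there is no honest ``symmetric critical point'' lift, so the first step of your plan cannot be verified.

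Relatedly, your operator identification is quantitatively false: in the displayed $D_\la(\bm y)$ for type $\mathrm{C}_r$ the two factors flanking the middle one contain $y_r^2$, whereas your tuple would produce $y_r$ to the first power there. Matching the factors forces the middle entries to be $y_r^2$, which then has double roots and violates {\bf G1} instead. Consequently the long-root step cannot be realized as the composite of three elementary type-A reproductions $r,r+1,r$, each governed by Lemma 4.2 of \cite{MV4}: the intermediate tuples lie outside the regime of that lemma for the same genericity reason. What is actually needed is a separate three-factor identity, namely that the product
\[
\Bigl(\pa-\ln'\tfrac{y_{r-1}T_1^2\cdots T_{r-1}^2T_r^2}{y_r^2\,T_1\cdots T_{r-1}}\Bigr)\Bigl(\pa-\ln'(T_1\cdots T_r)\Bigr)\Bigl(\pa-\ln'\tfrac{y_r^2\,T_1\cdots T_{r-1}}{y_{r-1}}\Bigr)
\]
is unchanged when $y_r$ is replaced by $\tilde y_r$ satisfying the $i=r$ instance of \eqref{3} for type $\mathrm{C}_r$, $W(y_r,x^{\langle\la+\rho,\alpha_r^\vee\rangle}\tilde y_r)=T_r\,y_{r-1}$. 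This invariance is the $\mathrm{C}$-type population statement established in \cite{MV2} and does not follow from the type-A lemma alone. With that input, your induction along the chain of reproductions and the evaluation at the monomial endpoint do go through (and for $l\lle r-1$, where $y_r=1$, your argument is already complete), but the ``routine transcription'' claim is precisely where the proof is incomplete.
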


\subsection{The case of \texorpdfstring{${\rm D}_r$}{Lg} }\label{Lg} Let $\g=\mathfrak{so}(2r)$ be the simple Lie algebra of type ${\rm D}_r$, where $r\gge 4$. We have $(\alpha_i,\alpha_i)=2$, $i=1,\dots,r$, $(\alpha_i,\alpha_{i-1})=1$, $i=1,\dots,r-1$, and  $(\alpha_r,\alpha_{r-2})=1$, $(\alpha_r,\alpha_{r-1})=0$. We work with data $\bm \La=(\la,\omega_1)$, $\bm z=(0,1)$, where $\la\in\mathcal P^+$.

We have
\begin{align}\label{eq dec D}
V_{\la}\otimes V_{\omega_1}=&V_{\la+\omega_1}\oplus V_{\la+\omega_1-\alpha_1}\oplus \dots \oplus V_{\la+\omega_1-\alpha_1-\dots-\alpha_{r}}\oplus V_{\la+\omega_1-\alpha_1-\dots-\alpha_{r-2}-\alpha_r} \nonumber\\&\oplus  V_{\la+\omega_1-\alpha_1-\dots-\alpha_{r-3}-2\alpha_{r-2}-\alpha_{r-1}-\alpha_r}\oplus\dots\oplus V_{\la+\omega_1-2\alpha_1-\dots-2\alpha_{r-2}-\alpha_{r-1}-\alpha_r}\nonumber\\
=&V_{(\la_1+1,\la_2,\dots,\la_r)}\oplus V_{(\la_1-1,\la_2+1,\la_3,\dots,\la_r)}\oplus\dots\oplus
V_{(\la_1,\dots,\la_{k-1},\la_{k}-1,\la_{k+1}+1,\la_{k+2},\dots,\la_r)}\nonumber\\ &\oplus\dots\oplus V_{(\la_1,\la_2,\dots,\la_{r-2}-1,\la_{r-1}+1,\la_r+1)}\oplus V_{(\la_1,\la_2,\dots,\la_{r-2},\la_{r-1}-1,\la_r+1)}\nonumber\\
&\oplus V_{(\la_1,\la_2,\dots,\la_{r-2}+1,\la_{r-1}-1,\la_r-1)}\oplus V_{(\la_1,\la_2,\dots,\la_{r-2},\la_{r-1}+1,\la_r-1)}\nonumber\\& \oplus V_{(\la_1,\la_2,\dots,\la_{r-4},\la_{r-3}+1,\la_{r-2}-1,\la_{r-1},\la_r)}\oplus \dots\oplus V_{(\la_1,\dots,\la_{k-2},\la_{k-1}+1,\la_{k}-1,\la_{k+1},\dots,\la_r)}\nonumber\\ &\oplus\dots \oplus V_{(\la_1+1,\la_2-1,\la_3,\dots,\la_r)}\oplus V_{(\la_1-1,\la_2,\dots,\la_r)},
\end{align}
with the convention that the summands with non-dominant highest weights are omitted. Note, in particular, all multiplicities are 1.

We call an $r$-tuple of integers $\bm l=(l_1,\dots,l_r)$ \emph{admissible} if the $V_{\la+\omega_1-\alpha(\bm l)}$ appears in (\ref{eq dec D}).

The admissible $r$-tuple $\bm l$ have the form
\beq\label{adm l D}
(\underbrace{1,\dots,1}_{k_1 \text{ ones}},0,\dots,0) \text{ or } (\underbrace{1,\dots,1}_{r-2 \text{ ones}},1,0)\text{ or } (\underbrace{1,\dots,1}_{r-2 \text{ ones}},0,1) \text{ or } (\underbrace{1,\dots,1}_{k_2 \text{ ones}},2,\dots,2,1,1),
\eeq
where $k_1=0,\dots,r-2,r$ and $k_2=0,\dots,r-2$. In the first case the length $l=l_1+\dots+l_r$ is $k_1$, in the second and third cases $r-1$ and in the forth case $2r-k_2-2$. It follows that different admissible $r$-tuples in the first and forth cases have different length and, therefore, admissible tuples $\bm l$ of these types are parametrized by length $l\in\{0,1,\dots,r-2,r,\dots,2r-2\}$. We denote the lengths in the second and third cases by $r-1$ and $\overline{r-1}$, respectively. More precisely, for $l\in\{0,1,\dots,r-1,\overline{r-1},r,\dots,2r-2\}$, $l$ is a length of an admissible $r$-tuple $\bm l$ if $l=0$ or $l\lle r-1$, $\la_l>0$ or if $l=\overline{r-1}$, $\la_r>0$ or if $l=r$, $\la_{r-1}>0$ and $\la_r>0$ or if $l\gge r+1$, $\la_{2r-l-1}>0$. We call such $l$ {\it admissible}.

Similarly to the case of type ${\rm B}_r$, see Theorem \ref{thm B2 generic} and  Corollary \ref{thm sol B}, we obtain the solutions to Bethe ansatz equations for $V_\la\otimes V_{\omega_1}$.

\begin{thm}\label{thm sol D}
Let $\g=\mathfrak{so}(2r)$. Let $\bs l$ be as in \eqref{adm l D}. If $\bs l$ is not admissible then the Bethe ansatz equation \eqref{eq:bae} associated to $\bm \La,\bm z,\bs l$ has no solutions. If $\bs l$ is admissible then the Bethe ansatz equation \eqref{eq:bae} associated to $\bm \La,\bm z,\bs l$ has exactly one solution represented by the following $r$-tuple of polynomials $\bm y^{(l)}$.

For $l=0,1,\dots, r-1$, we have $\bm y^{(l)}=(x-c_1^{(l)},\dots,x-c_l^{(l)},1,\dots,1)$, where $c_j^{(l)}$ are given by \eqref{3.4.12}.

For $l=\overline{r-1}$, we have $\bm y^{(\overline{r-1})}=(x-c_1^{(\overline{r-1})},\dots,x-c_{r-2}^{(\overline{r-1})},1,x-c_{r}^{(\overline{r-1})})$, where
$$
c_j^{(\overline{r-1})}=\prod_{i=1}^j\frac{\la_i+\dots+\la_{r-2}+\la_r+r-1-i}{\la_i+\dots+\la_{r-2}+\la_r+r-i},\quad j=1,\dots,r-2,
$$
and
$$
c_r^{(\overline{r-1})}=\frac{\la_r}{\la_r+1}\prod_{i=1}^{r-2}\frac{\la_i+\dots+\la_{r-2}+\la_r+r-1-i}{\la_i+\dots+\la_{r-2}+\la_r+r-i}.
$$

For $l=r$, we have $\bm y^{(r)}=(x-c_1^{(r)},\dots,x-c_{r}^{(r)})$, where
\[
c_j^{(r)}=\prod_{i=1}^{j}\frac{\la_i+\dots+\la_{r}+r-i}{\la_i+\dots+\la_{r}+r+1-i},\quad j=1,\dots,r-2,
\]
\[
c_{r-1}^{(r)}=\frac{\la_{r-1}}{\la_{r-1}+1}\prod_{i=1}^{r-2}\frac{\la_i+\dots+\la_{r}+r-i}{\la_i+\dots+\la_{r}+r+1-i},
\]
and
\[
c_{r}^{(r)}=\frac{\la_{r}}{\la_{r}+1}\prod_{i=1}^{r-2}\frac{\la_i+\dots+\la_{r}+r-i}{\la_i+\dots+\la_{r}+r+1-i}.
\]

For $l=r+1,\dots,2r-2$, we have $$\bm y^{(l)}=(x-c_1^{(l)},\dots,x-c_{2r-l-2}^{(l)},(x-a_{2r-l-1}^{(l)})(x-b_{2r-l-1}^{(l)}),\dots,(x-a_{r-2}^{(l)})(x-b_{r-2}^{(l)}),x-c_{r-1}^{(l)},x-c_r^{(l)}),$$ where
\begin{align*}
c_j^{(l)}=&\prod_{i=1}^{j}\frac{\la_i+\dots+\la_{2r-2-l}+2\la_{2r-l-1}+\dots+2\la_{r-2}+\la_{r-1}+\la_r+l-i}{\la_i+\dots+\la_{2r-2-l}+2\la_{2r-l-1}+\dots+2\la_{r-2}+\la_{r-1}+\la_r+l+1-i},
\end{align*}
$j=1,\dots,2r-l-2$,
\begin{align*}
c_{r-1}^{(l)}=&\prod_{i=1}^{2r-2-l}\frac{\la_i+\dots+\la_{2r-2-l}+2\la_{2r-l-1}+\dots+2\la_{r-2}+\la_{r-1}+\la_r+l-i}{\la_i+\dots+\la_{2r-2-l}+2\la_{2r-l-1}+\dots+2\la_{r-2}+\la_{r-1}+\la_r+l+1-i}\nonumber\\
&\times \prod_{i=2r-1-l}^{r-2}\frac{\la_{2r-1-l}+\dots+\la_i+2\la_{i+1}+\dots+2\la_{r-2}+\la_{r-1}+\la_r+l-i-1}{\la_{2r-1-l}+\dots+\la_i+2\la_{i+1}+\dots+2\la_{r-2}+\la_{r-1}+\la_r+l-i}\nonumber \\
&\times  \frac{\la_{2r-1-l}+\dots+\la_{r-2}+\la_{r-1}+l-r}{\la_{2r-1-l}+\dots+\la_{r-2}+\la_{r-1}+l-r+1},
\end{align*}
\begin{align*}
c_{r}^{(l)}=&\prod_{i=1}^{2r-2-l}\frac{\la_i+\dots+\la_{2r-2-l}+2\la_{2r-l-1}+\dots+2\la_{r-2}+\la_{r-1}+\la_r+l-i}{\la_i+\dots+\la_{2r-2-l}+2\la_{2r-l-1}+\dots+2\la_{r-2}+\la_{r-1}+\la_r+l+1-i}\nonumber\\
& \times \prod_{i=2r-1-l}^{r-2}\frac{\la_{2r-1-l}+\dots+\la_i+2\la_{i+1}+\dots+2\la_{r-2}+\la_{r-1}+\la_r+l-i-1}{\la_{2r-1-l}+\dots+\la_i+2\la_{i+1}+\dots+2\la_{r-2}+\la_{r-1}+\la_r+l-i}\nonumber \\
&\times \frac{\la_{2r-1-l}+\dots+\la_{r-2}+\la_{r}+l-r}{\la_{2r-1-l}+\dots+\la_{r-2}+\la_{r}+l-r+1},
\end{align*}

\begin{align*}
a_k^{(l)}b_k^{(l)}=&\left(\prod_{i=1}^{2r-l-2}\frac{\la_i+\dots+\la_{2r-2-l}+2\la_{2r-l-1}+\dots+2\la_{r-2}+\la_{r-1}+\la_r+l-i}{\la_i+\dots+\la_{2r-2-l}+2\la_{2r-l-1}+\dots+2\la_{r-2}+\la_{r-1}+\la_r+l-i+1}\right)^2\nonumber\\
&\times\prod_{i=2r-1-l}^{r-2}\frac{\la_{2r-1-l}+\dots+\la_i+2\la_{i+1}+\dots+2\la_{r-2}+\la_{r-1}+\la_r+l-i-1}{\la_{2r-1-l}+\dots+\la_i+2\la_{i+1}+\dots+2\la_{r-2}+\la_{r-1}+\la_r+l-i}\nonumber\\
&\times\prod_{i=2r-1-l}^{k-1}\frac{\la_{2r-1-l}+\dots+\la_i+2\la_{i+1}+\dots+2\la_{r-2}+\la_{r-1}+\la_r+l-i-1}{\la_{2r-1-l}+\dots+\la_i+2\la_{i+1}+\dots+2\la_{r-2}+\la_{r-1}+\la_r+l-i}\nonumber\\&\times
\prod_{i=k}^{r-2}\frac{\la_{2r-l-1}+\dots+\la_i+l+i+1-2r}{\la_{2r-l-1}+\dots+\la_i+l+i+2-2r}\nonumber\\
&\times\frac{\la_{2r-1-l}+\dots+\la_{r-2}+\la_{r-1}+l-r}{\la_{2r-1-l}+\dots+\la_{r-2}+\la_{r-1}+l-r+1}\cdot\frac{\la_{2r-1-l}+\dots+\la_{r-2}+\la_{r}+l-r}{\la_{2r-1-l}+\dots+\la_{r-2}+\la_{r}+l-r+1},
\end{align*}
and
\begin{align*}
a_k^{(l)}+b_k^{(l)}=&\prod_{i=1}^{2r-l-2}\frac{\la_i+\dots+\la_{2r-2-l}+2\la_{2r-l-1}+\dots+2\la_{r-2}+\la_{r-1}+\la_r+l-i}{\la_i+\dots+\la_{2r-2-l}+2\la_{2r-l-1}+\dots+2\la_{r-2}+\la_{r-1}+\la_r+l-i+1}\nonumber\\
&\times\prod_{i=2r-1-l}^{k-1}\frac{\la_{2r-1-l}+\dots+\la_i+2\la_{i+1}+\dots+2\la_{r-2}+\la_{r-1}+\la_r+l-i-1}{\la_{2r-1-l}+\dots+\la_i+2\la_{i+1}+\dots+2\la_{r-2}+\la_{r-1}+\la_r+l-i}\nonumber\\
&\times\left(\frac{2\la_{2r-l-1}+\dots+2\la_{r-2}+\la_{r-1}+\la_r+2l-2r}{2\la_{2r-l-1}+\dots+2\la_{r-2}+\la_{r-1}+\la_r+2l-2r+1}\right.\nonumber\\&+\frac{2\la_{2r-l-1}+\dots+2\la_{r-2}+\la_{r-1}+\la_r+2l-2r+2}{2\la_{2r-l-1}+\dots+2\la_{r-2}+\la_{r-1}+\la_r+2l-2r+1}\nonumber\\
&\times \prod_{i=k}^{r-2}\frac{\la_{2r-1-l}+\dots+\la_i+2\la_{i+1}+\dots+2\la_{r-2}+\la_{r-1}+\la_r+l-i-1}{\la_{2r-1-l}+\dots+\la_i+2\la_{i+1}+\dots+2\la_{r-2}+\la_{r-1}+\la_r+l-i}\nonumber\\
&\times \frac{\la_{2r-1-l}+\dots+\la_{r-2}+\la_{r-1}+l-r}{\la_{2r-1-l}+\dots+\la_{r-2}+\la_{r-1}+l-r+1}\cdot\frac{\la_{2r-1-l}+\dots+\la_{r-2}+\la_{r}+l-r}{\la_{2r-1-l}+\dots+\la_{r-2}+\la_{r}+l-r+1}\nonumber\\
&\times \left.\prod_{i=k}^{r-2}\frac{\la_{2r-l-1}+\dots+\la_i+l+i+1-2r}{\la_{2r-l-1}+\dots+\la_i+l+i+2-2r}\right),
\end{align*}
$k=2r-1-l,\dots,r-2$.$\hfill\square$
\end{thm}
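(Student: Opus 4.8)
The plan is to follow essentially verbatim the strategy developed for type $\mathrm{B}_r$ in Sections~\ref{sec n=2, B}. The only structural novelty is that the $\mathrm{D}_r$ Dynkin diagram forks at $\alpha_{r-2}$, with the two nodes $\alpha_{r-1},\alpha_r$ attached to it and orthogonal to each other; this is what produces the two exceptional lengths $r-1,\overline{r-1}$ and the doubled linear tail in the quadratic regime. First I would establish the reproduction recursion lemmas, the analogues of Lemmas~\ref{lem3.4}, \ref{lem3.12} and \ref{lem3.7}. For an admissible length $l\gge r+1$ the tuple consists of linear factors $y_1,\dots,y_{2r-l-2}$, quadratic factors $y_{2r-l-1},\dots,y_{r-2}$ abutting the fork, and two linear tails $y_{r-1},y_r$, and I would reproduce successively in directions $2r-l-1,\dots,r-2$, then in the fork directions $r-1$ and $r$ (which commute), then back along $r-2,\dots,1$, checking at each step, exactly as in Lemma~\ref{lem3.4}, that the new polynomial has no multiple root and no common root with its neighbours, so that Theorem~\ref{thm:2.5} applies.

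Combining these lemmas with the explicit shifted Weyl-group formulas for $\mathrm{D}_r$ (the analogues of Lemmas~\ref{lem3.10} and \ref{lem3.9}, in which the fork produces two symmetric middle entries), I would argue exactly as in the discussion following Lemma~\ref{lem3.8}: every solution descends by a chain of \emph{invertible} reproduction procedures to the unique length-$0$ solution $(1,\dots,1)$. This simultaneously proves that no solution exists when $\bs l$ is not admissible, since at some step the requisite Wronskian identity \eqref{3} becomes impossible for degree reasons, just as in the $\mathrm{B}_2$ computation of Section~\ref{sec3.3}, and that for admissible $\bs l$ there is at most one solution.

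For existence together with the closed formulas, I would run the recursion in reverse, starting from $(1,\dots,1)$ and solving \eqref{3} for the new factor at each reproduction step. This determines the constant terms $c_j^{(l)}$ of the linear factors and the elementary symmetric functions $a_k^{(l)}b_k^{(l)}$ and $a_k^{(l)}+b_k^{(l)}$ of the quadratic factors recursively in terms of the fully reduced weight $\theta$; rewriting $\theta$ in terms of $\la$ through the $\mathrm{D}_r$ Weyl formulas then yields the displayed expressions, which one checks satisfy the reproduction identities by direct substitution.

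The hard part is the genericity verification, the analogue of Theorem~\ref{thm B2 generic}: one must confirm \textbf{G1}--\textbf{G3} for the computed tuples. As in type $\mathrm{B}_r$, every potential coincidence of roots is reduced, via the analogues of \eqref{3.4.3}--\eqref{3.4.4} and the explicit formulas, to an identity between products of fractions that is impossible because each factor lies strictly in $(0,1)$ (or each exceeds $1$). The genuinely new point is the fork: since $\alpha_{r-2}$ now has the two neighbours $\alpha_{r-1},\alpha_r$ realised by the tail factors $x-c_{r-1}^{(l)}$ and $x-c_r^{(l)}$, condition \textbf{G3} must be tested against both branches, and the quadratic block $y_{2r-l-1},\dots,y_{r-2}$ meets this fork rather than a single linear chain. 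This forces a handful of additional cases in the estimates, together with a separate treatment of the exceptional lengths $r-1,\overline{r-1},r$ where the two tail nodes behave asymmetrically, but it introduces no idea beyond those already used for $\mathrm{B}_r$.
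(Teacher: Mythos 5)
Your proposal is correct and follows essentially the same route as the paper, which proves Theorem~\ref{thm sol D} exactly by transporting the type $\mathrm{B}_r$ machinery (the recursion lemmas in the spirit of Lemmas~\ref{lem3.4}--\ref{lem3.7}, descent of every solution to the trivial one via invertible reproductions, reverse recursion for the explicit formulas, and the genericity check \textbf{G1}--\textbf{G3} as in Theorem~\ref{thm B2 generic}), with the fork at $\alpha_{r-2}$ and the commuting reflections $s_{r-1},s_r$ handled just as you describe. You even correctly note the structural simplification that, unlike type $\mathrm{B}_r$ where the zero weight forced a separate appeal to \cite{MV3} at $l=r$, in type $\mathrm{D}_r$ all weights of $V_{\omega_1}$ lie in the Weyl orbit of $\omega_1$, so every admissible length, including $r-1$, $\overline{r-1}$ and $r$, is reached by the reproduction chain.
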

Note that the formulas above with $r=3$ correspond to solutions of the Bethe ansatz equations of type ${\rm A}_3$ and $\bm \La=(\la,\omega_2)$. These formulas were given in Theorem 5.5, \cite{MV1}.

Then we deduce the analog of Theorem \ref{thm B generic}.

\begin{thm}
Let $\g=\mathfrak{so}(2r)$ and $\la\in\cP^+$. For a generic $(N+1)$-tuple of distinct complex numbers $\bm z=(z_0,z_1,\dots,z_N)$, the Gaudin Hamiltonians $(\cH_0,\cH_1,\dots,\cH_N)$ acting in ${\rm Sing}\left(V_\la\otimes V_{\omega_1}^{\otimes N}\right)$ are diagonalizable. Moreover, for generic $\bm z$ there exists a set of solutions $\{\bm t_i,~i\in I\}$ of the Bethe ansatz equation \eqref{eq:bae} such that the corresponding Bethe vectors $\{\omega(\bm z,\bm t_i),~i\in I\}$ form a basis of ${\rm Sing}\left(V_\la\otimes V_{\omega_1}^{\otimes N}\right)$.$\hfill\square$
\end{thm}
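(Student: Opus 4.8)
The plan is to follow the proof of Theorem~\ref{thm B generic} for type $\mathrm{B}_r$ step by step, with one genuine modification dictated by the type $\mathrm{D}_r$ decomposition \eqref{eq dec D}. The key observation is that it suffices to produce, for generic $\bm z$, a basis of ${\rm Sing}(V_\la\otimes V_{\omega_1}^{\otimes N})$ consisting of Bethe vectors: by Theorem~\ref{thm:bveigen} each Bethe vector is a simultaneous eigenvector of the commuting operators $\cH_0,\dots,\cH_N$, so a Bethe \emph{basis} is automatically an eigenbasis and diagonalizability follows. The first ingredient is the two-point completeness statement, the exact analog of Theorem~\ref{thm3.14}: by Theorem~\ref{thm sol D} every admissible $\bm l$ contributes exactly one solution, and by Theorems~\ref{thm:bvnonzero} and \ref{thm:bveigen} its Bethe vector is a nonzero singular vector lying in the summand $V_{\la+\omega_1-\alpha(\bm l)}$ of \eqref{eq dec D}. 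Since these summands are pairwise non-isomorphic and of multiplicity one, the resulting Bethe vectors form a basis of ${\rm Sing}(V_\la\otimes V_{\omega_1})$.

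Next I would set up the iterated singular vectors exactly as in the proof of Theorem~\ref{thm B generic}. For each admissible tuple $\mu_0=\la,\mu_1,\dots,\mu_N$ with $V_{\mu_k}\subset V_{\mu_{k-1}}\otimes V_{\omega_1}$, one defines $v_{\mu_0,\dots,\mu_N}$ inductively as the unique two-point Bethe vector attached to $V_{\mu_{k-1}}\otimes V_{\omega_1}$ with sites $(0,\tl z_k)$; these vectors form a basis of ${\rm Sing}(V_\la\otimes V_{\omega_1}^{\otimes N})$. Choosing $\bm z$ as in \eqref{5.4}--\eqref{5.5}, $z_k=z+\ve^{N+1-k}\tl z_k$, the leading order of the type $\mathrm{D}_r$ Bethe ansatz equations \eqref{eq:bae} decouples into the two-point equations for each $V_{\mu_{k-1}}\otimes V_{\omega_1}$; the computation is identical to the type $\mathrm{B}_r$ one, only the Cartan entries $(\alpha_b,\alpha_{b'})$ change. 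Since each two-point solution is unique, hence isolated, Lemma~\ref{lem5.1} deforms it to a genuine solution of the full system, and the associated Bethe vector has leading asymptotics $\omega_{\mu_1,\dots,\mu_N}=\ve^{K}(v_{\mu_1,\dots,\mu_N}+o(1))$ as in \eqref{5.7}.

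The linear independence of these Bethe vectors is where I would depart from the type $\mathrm{B}_r$ argument. Expanding each $\omega_{\mu_1,\dots,\mu_N}$ in the basis $\{v_{\mu_1',\dots,\mu_N'}\}$, its own coefficient is $\ve^{K}(1+o(1))$ while every other coefficient in that row is $\ve^{K}o(1)$; dividing the row by $\ve^{K}$ turns the transition matrix into $I+o(1)$, whose determinant tends to $1$. Hence for small $\ve$ the Bethe vectors are linearly independent, and being equinumerous with the iterated singular vectors they form a basis of ${\rm Sing}(V_\la\otimes V_{\omega_1}^{\otimes N})$. Diagonalizability of $\cH_0,\dots,\cH_N$ follows at once, and since the set of good $\bm z$ is a nonempty Zariski-open subset of $\C^{N+1}$, the theorem holds for generic $\bm z$.

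The main obstacle -- and the reason the statement asserts only diagonalizability, not simple joint spectrum as in Theorem~\ref{thm B generic} -- is the fork of the type $\mathrm{D}_r$ Dynkin diagram. The two summands $V_{\la+\omega_1-\alpha_1-\dots-\alpha_{r-1}}$ and $V_{\la+\omega_1-\alpha_1-\dots-\alpha_{r-2}-\alpha_r}$, of lengths $r-1$ and $\overline{r-1}$, differ by $\alpha_r-\alpha_{r-1}$ and are therefore incomparable in the dominance order, so Lemma~\ref{hum} does not apply; a short computation of $(\mu+\rho,\mu+\rho)$ shows their Casimir eigenvalues differ by $2(\la_{r-1}-\la_r)$, and thus coincide precisely when $\la_{r-1}=\la_r$. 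In that case the quadratic Gaudin Hamiltonian cannot separate the two corresponding iterated singular vectors, so the eigenvalue argument of Proposition~\ref{prop3.18} and of the proof of Theorem~\ref{thm B generic} breaks down. This forces us to extract linear independence from the asymptotic transition matrix above rather than from distinctness of eigenvalues, and it is exactly why simple spectrum fails for type $\mathrm{D}_r$ in general.
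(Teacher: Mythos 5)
Your proposal is correct and is essentially the argument the paper intends: for this theorem the paper records no proof (it is stated as the analog of Theorem~\ref{thm B generic}, ``deduced'' from the type $\mathrm{B}_r$ case), so the real comparison is with the proof of Theorem~\ref{thm B generic}. You reproduce its skeleton faithfully --- two-point completeness from Theorem~\ref{thm sol D} together with Theorems~\ref{thm:bvnonzero}, \ref{thm:bveigen} and the multiplicity-one decomposition \eqref{eq dec D}; iterated singular vectors; the degeneration \eqref{5.4}--\eqref{5.5}; Lemma~\ref{lem5.1}; the asymptotics \eqref{5.7} --- and your single departure is exactly the one that is forced. In type $\mathrm{B}_r$ the paper extracts linear independence (indeed simple joint spectrum) from the eigenvalue separation of Proposition~\ref{prop3.18}, which, as the paper's closing remark notes, is inapplicable in type $\mathrm{D}_r$ because the summands of lengths $r-1$ and $\overline{r-1}$ have non-comparable highest weights. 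Your replacement --- reading independence off the transition matrix, which after scaling each row by $\ve^{-K}$ becomes $I+o(1)$ with determinant tending to $1$ --- is precisely what the asymptotics \eqref{5.7} support, and your Casimir computation (eigenvalues differing by $2(\la_{r-1}-\la_r)$, hence coinciding iff $\la_{r-1}=\la_r$) correctly quantifies the paper's remark on the diagram automorphism. What the eigenvalue route buys in type $\mathrm{B}_r$ is the stronger conclusion of simple joint spectrum plus an easy genericity step (simple spectrum is a discriminant, i.e.\ Zariski-open, condition); what your route buys is independence from any spectral separation, which is indispensable here.

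Two small points deserve tightening. First, ``unique hence isolated'' should be routed through the finiteness of the critical set for dominant $\La-\alpha(\bm l)$ (the lemma from [MV2] quoted in Section~\ref{sec:Bethe ansatz}), since isolation, not uniqueness, is what Lemma~\ref{lem5.1} requires. Second, your final sentence is too quick: unlike simple spectrum, diagonalizability alone is not a Zariski-open condition on a family of operators, so to pass from the constructed region to generic $\bm z$ you should observe that the deformed Bethe roots, and hence the determinant of the transition matrix of Bethe vectors against a fixed basis, depend algebraically on $\bm z$; an algebraic function that is nonzero somewhere vanishes only on a proper subvariety, and off that subvariety the Bethe vectors remain a basis of ${\rm Sing}\left(V_\la\otimes V_{\omega_1}^{\otimes N}\right)$, which yields both assertions of the theorem. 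Neither point affects the substance of your argument.
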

For type D, the algebra has a non-trivial diagram automorphism which leads to degeneracy of the spectrum.
 For example, if $\la_{r-1}=\la_{r}$, then the Bethe vectors corresponding to the critical points $\bm y^{(r-1)}$ and ${\bm y}^{(\overline{r-1})}$ are eigenvectors of the Gaudin Hamiltonian $\cH:=\cH_1=-\cH_2$ with the same eigenvalue. In particular Proposition \ref{prop3.18} is not applicable since the two corresponding
summands in \eqref{eq dec D} have non-comparable highest weights.

\end{document}